

 \documentclass[final,5p,times]{elsarticle}


\usepackage[utf8]{inputenc}

\usepackage{graphicx,array}

\usepackage{amsmath}
\usepackage{amssymb}
\usepackage{amscd}
\usepackage{amsfonts}
\usepackage{amsthm}
\usepackage{moredefs}
\usepackage{mathabx}
\usepackage{algorithm}
\usepackage{algpseudocode}
\usepackage{multirow}
\usepackage{color}
\usepackage{rotating}
\usepackage{url}
\usepackage{hyperref}
\usepackage{multimedia}

\theoremstyle{plain}

\newtheorem{Lem}{Lemma}

\newtheorem{Rem}{Remark}

\theoremstyle{definition}

\def\CL{\mathbb{C}^L}

\def\RR{\mathbb{R}}

\def\CC{\mathbb{C}}
\def\ZZ{\mathbb{Z}}
\def\NN{\mathbb{N}}

\newcommand{\prox}{\operatorname{prox}}

\newcommand{\bd}{\mathbf}
\newcommand{\supp}{\operatorname{supp}}


\journal{Applied Mathematics and computation}

\begin{document}

\begin{frontmatter}



\title{Designing Gabor windows using convex optimization}

\author{Nathana\"el Perraudin$^a$, Nicki Holighaus$^b$,\\ Peter L. S\o ndergaard$^c$
and Peter Balazs$^b$
}

\address{$^a$Nathana\"el Perraudin is with the Signal Processing Laboratory 2, \'Ecole polytechnique f\'ed\'erale de Lausanne,  CH-1015 Lausanne, Switzerland \\
$^b$Austrian Academy of Sciences, Wohllebengasse 12--14, 1040 Vienna, Austria \\
$^c$Oticon A/S, 2765 Sm\o rum, Denmark}


\begin{abstract}
\boldmath
Redundant Gabor frames admit an infinite number of dual frames, yet only the canonical dual Gabor system, constructed from the minimal $\ell^{2}$-norm dual window, is widely used. This window function however, might lack desirable properties, e.g. good time-frequency concentration, small support or smoothness. We employ convex optimization methods to design dual windows satisfying the Wexler-Raz equations and optimizing various constraints. Numerical experiments suggest that alternate dual windows with considerably improved features can be found. 
\end{abstract}

\begin{keyword}
Short Time Fourier Transform \sep dual window design \sep tight window design  \sep convex optimization \sep Gabor system
\end{keyword}

\end{frontmatter}




\section{Introduction}

Filterbanks, in particular those allowing for perfect reconstruction (PR), are fundamental and essential tools of signal processing. Consequently, the construction of analysis/synthesis filterbank pairs forms a central topic in the literature, relying on various approaches, like polyphase representation~\cite{ve86} or algebraic methods~\cite{SamadiAS04}. Other methods rely on frame theory~\cite{cos98}, similar to the approach we wish to present. Probably the most widely adopted type of filterbank are modulated cosine and Gabor filterbanks (or transforms)~\cite{gr01,fest98,ma92}, which are closely related.
Gabor transforms, also known as sampled short-time Fourier transforms, provide a uniform time-frequency representation by decomposing a signal into translates and modulations of a single \emph{window function}. They have been used in various applications, among others~\cite{Debbal20071041,roach2008event,Fujita2007153,adler2012audio}, and variations~\cite{zeevi1998multi,Li2015102,badohojave11}. Such filterbanks have a rich structure, are easy to interpret and allow for efficient computation. A substantial body of work exists on the subjects of invertibility of Gabor filterbanks, perfect reconstruction pairs of Gabor windows and window quality, with a strong emphasis on the overcomplete case~\cite{chkiki10,chkiki12}. The ability of the analysis filterbank to separate signal components and the precision of the synthesis operation, after coefficient manipulation, depend crucially on the time and frequency concentration of the windows used. While either the analysis or synthesis window can be chosen almost freely,
 tuned to the desired properties such as optimal time and frequency concentration, choice of the \emph{dual} window is restricted to the set of functions such that a PR pair is obtained. For computational reasons, detailed below, there is 
a 
canonical 
choice for 
the dual window, used almost exclusively. However, this \emph{canonical dual window} might not be optimal with regards to the desired criteria, such as time-frequency concentration or short support, required for high quality processing and efficient computation respectively. 

Therefore, a flexible method to compute optimal (or optimized) dual windows, considering the full set of possible choices and valid for any set of starting parameters, provides a valuable tool for the signal processing community. We obtain such a method by merging considerations from the theory of Gabor frames with the tools provided by modern convex optimization. The optimization framework we present is not limited to concentration or support optimization, but allows optimization with regards to any criterion that can be expressed through a suitable convex functional. Nonetheless, the aforementioned criteria are of universal importance and well-suited to demonstrate the capabilities and limitations of our method, which is why they form the focus of this contribution. 

Since all dual windows perform perfect reconstruction from unmodified Gabor coefficients, the purpose of constructing alternative dual windows might not immediately be obvious beside the minimization of the support. 
However, if the coefficients are modified, e.g. through signal processing procedures such as frame multipliers~\cite{feinow1,xxlmult1,balsto09new}, also known as Gabor filters~\cite{hlawatgabfilt1}, the shape of the dual window plays an important role in the quality and localization of the performed modifications. After processing, a signal is synthesized from the modified coefficients employing a dual Gabor filterbank. 
Let us illustrate the consequences of the window on the synthesis process after modification of the time-frequency representation with a toy example. For this example, we wish to remove an undesirable time-frequency component from a synthetic signal. In Figure~\ref{fig:gabor transform}, we want to remove a localized sinusoid with only minor alteration of the remaining signal. This filtering operation relies on a joint time-frequency representation, since at each time or frequency position, several signal components are active. Although both dual windows, naturally, fail to eliminate the sinusoid completely, they provide visibly different synthesis performance. 
\begin{figure}[!thp]
  \begin{center}
 \begin{minipage}[t]{0.5\textwidth} 
  \includegraphics[height=0.26\textwidth,width=0.48\textwidth]{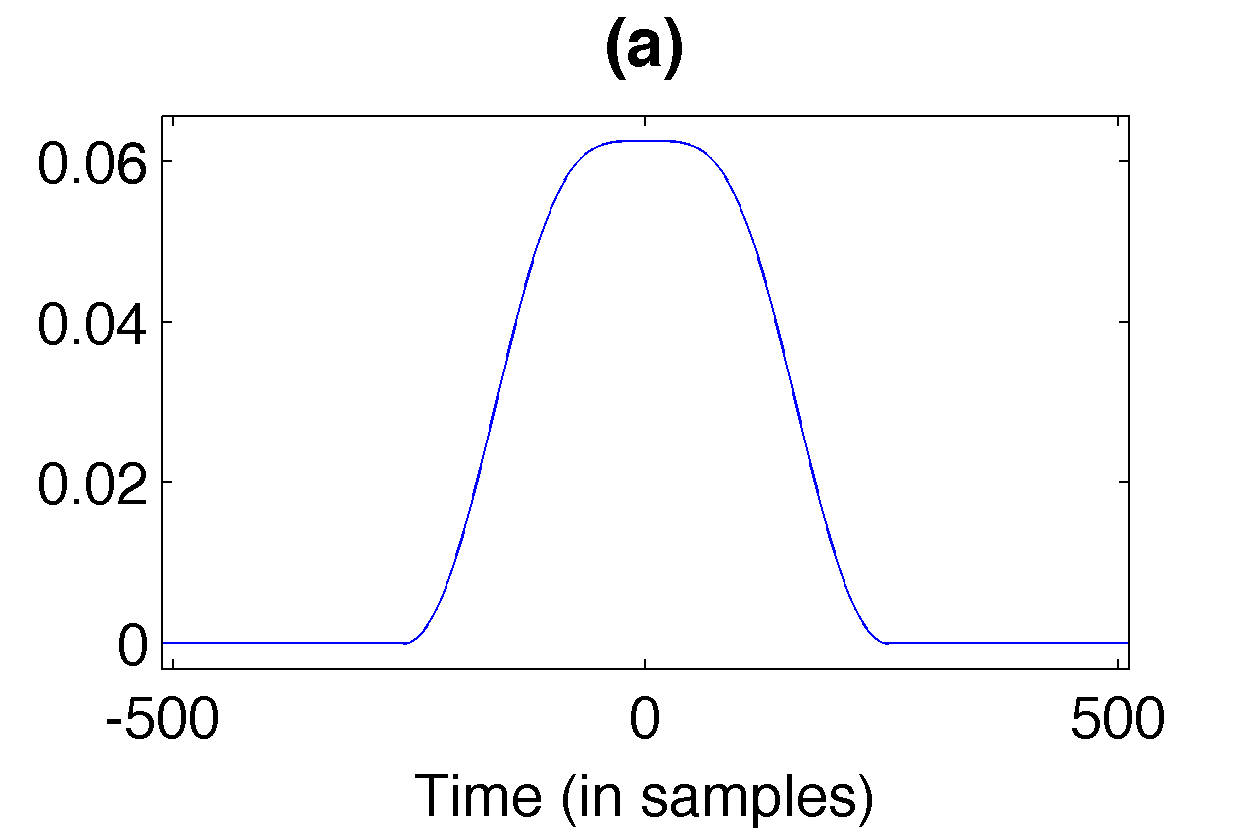}
  \includegraphics[height=0.26\textwidth,width=0.48\textwidth]{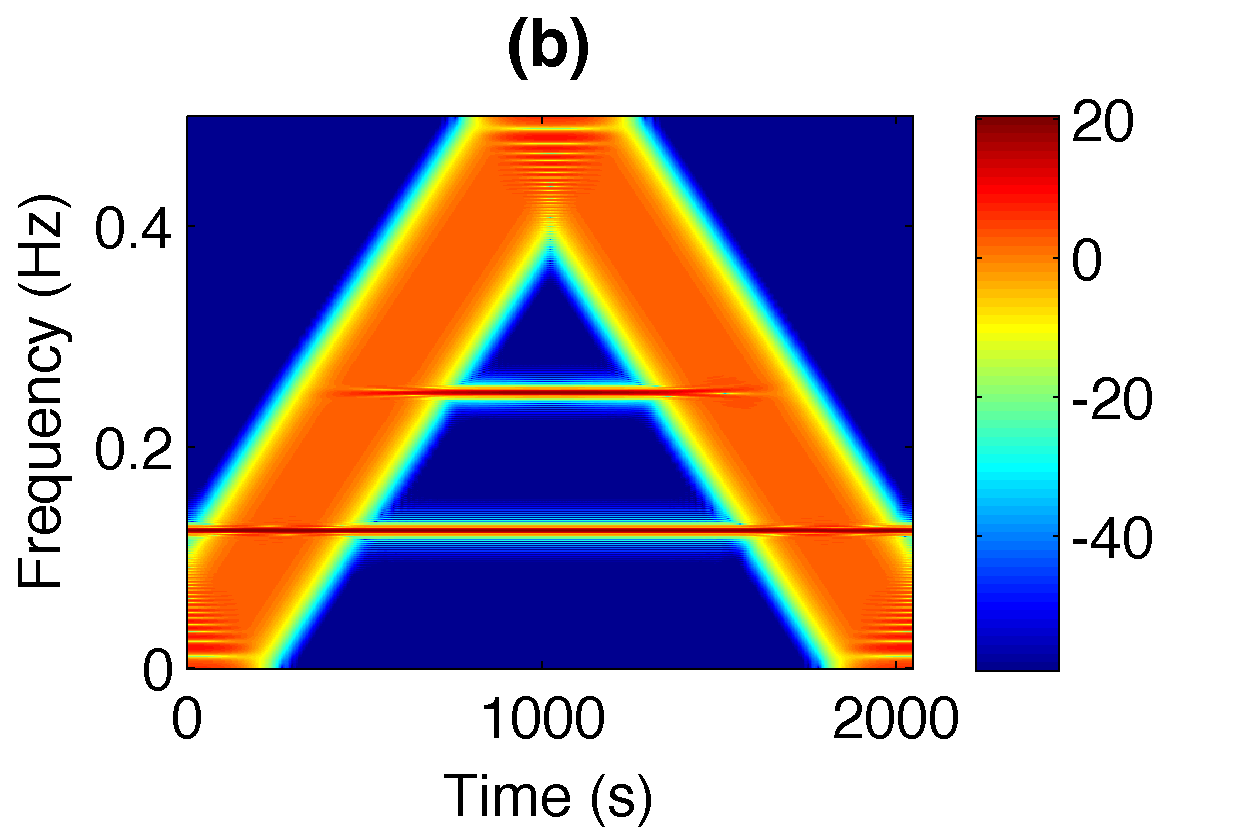}
 \end{minipage}
\par
\begin{minipage}[t]{0.5\textwidth} 
\includegraphics[height=0.26\textwidth,width=0.48\textwidth]{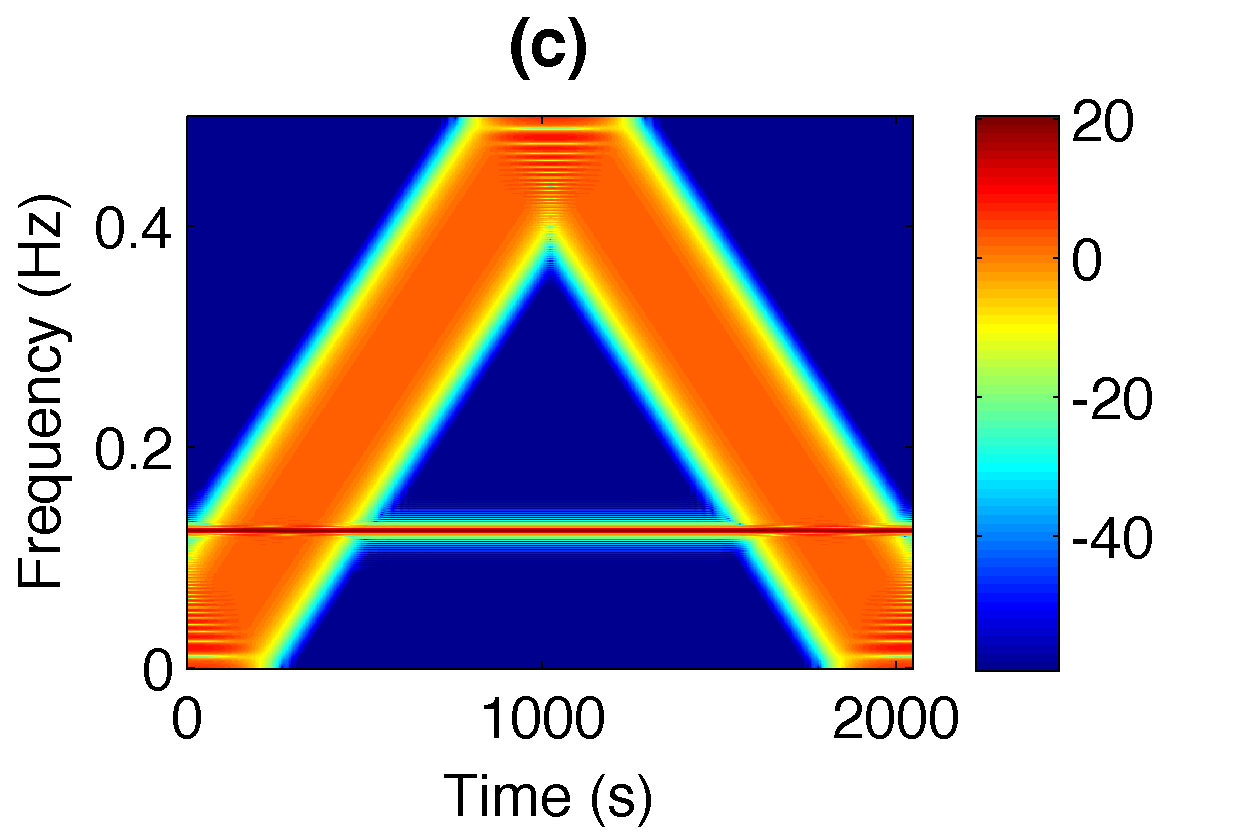}
\includegraphics[height=0.26\textwidth,width=0.48\textwidth]{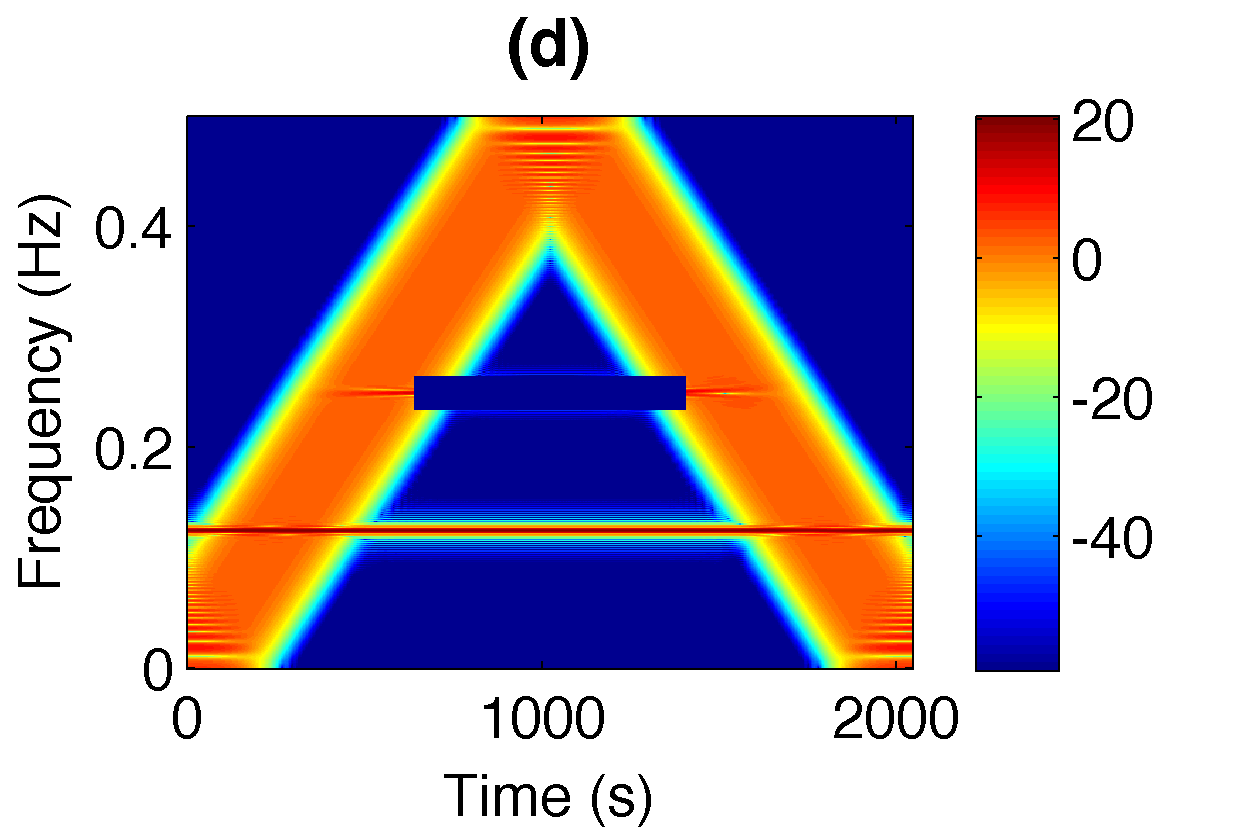}
  \end{minipage}
\par
\begin{minipage}[t]{0.5\textwidth} 
\includegraphics[height=0.26\textwidth,width=0.48\textwidth]{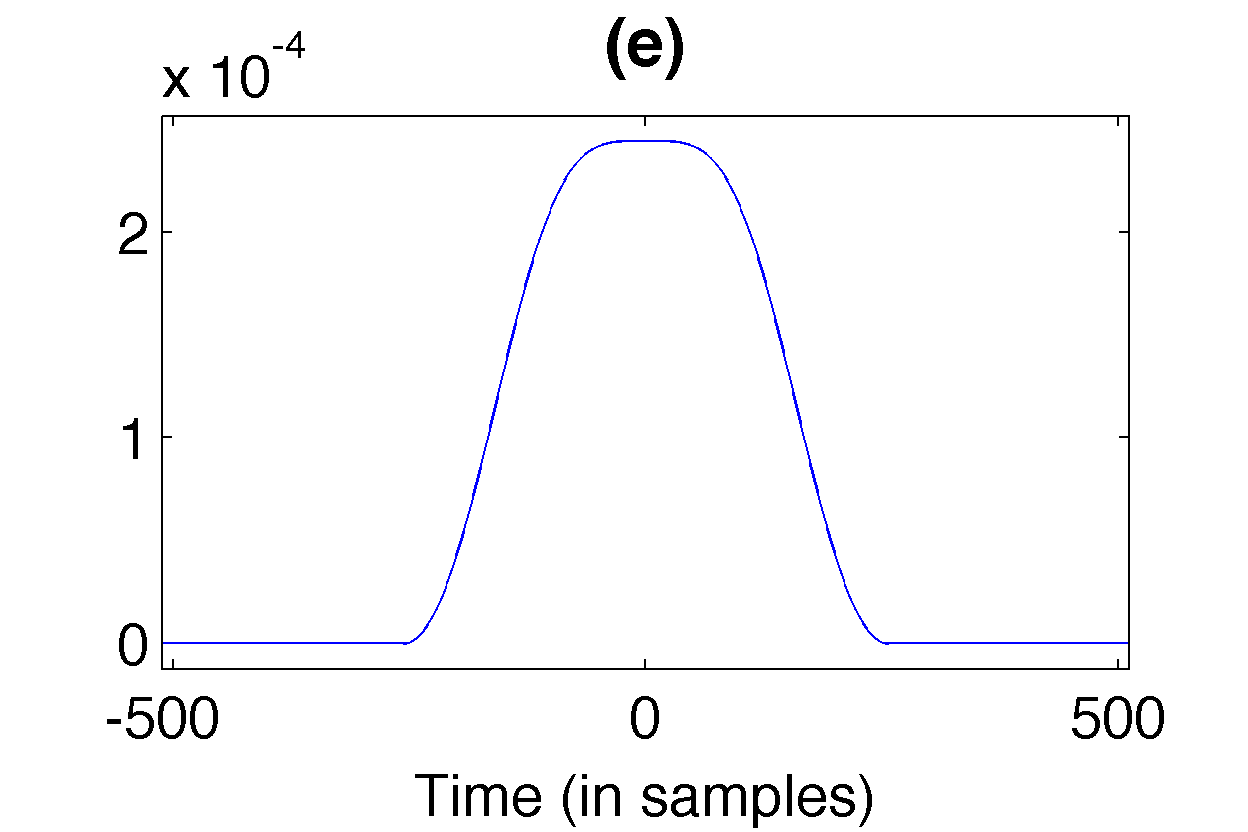}
\includegraphics[height=0.26\textwidth,width=0.48\textwidth]{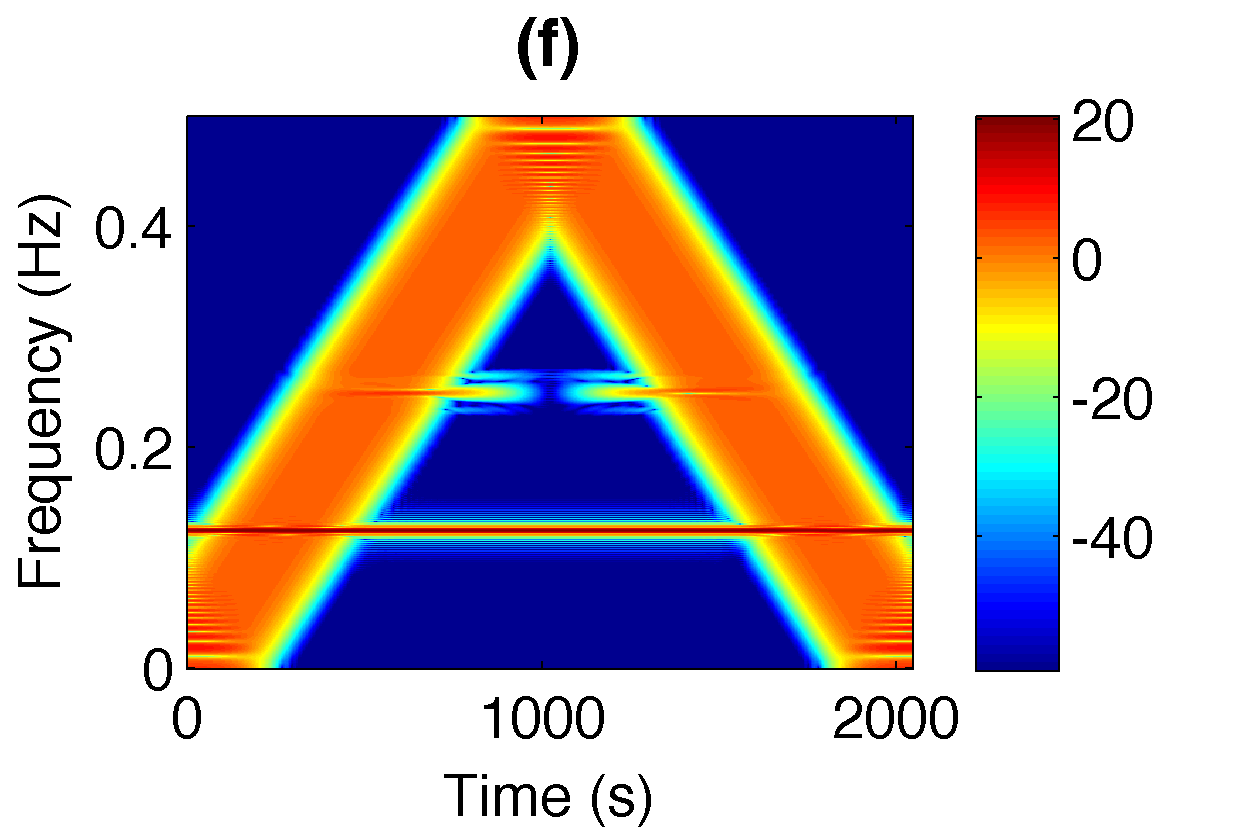}
  \end{minipage}
\par
\begin{minipage}[t]{0.5\textwidth} 
\includegraphics[height=0.26\textwidth,width=0.48\textwidth]{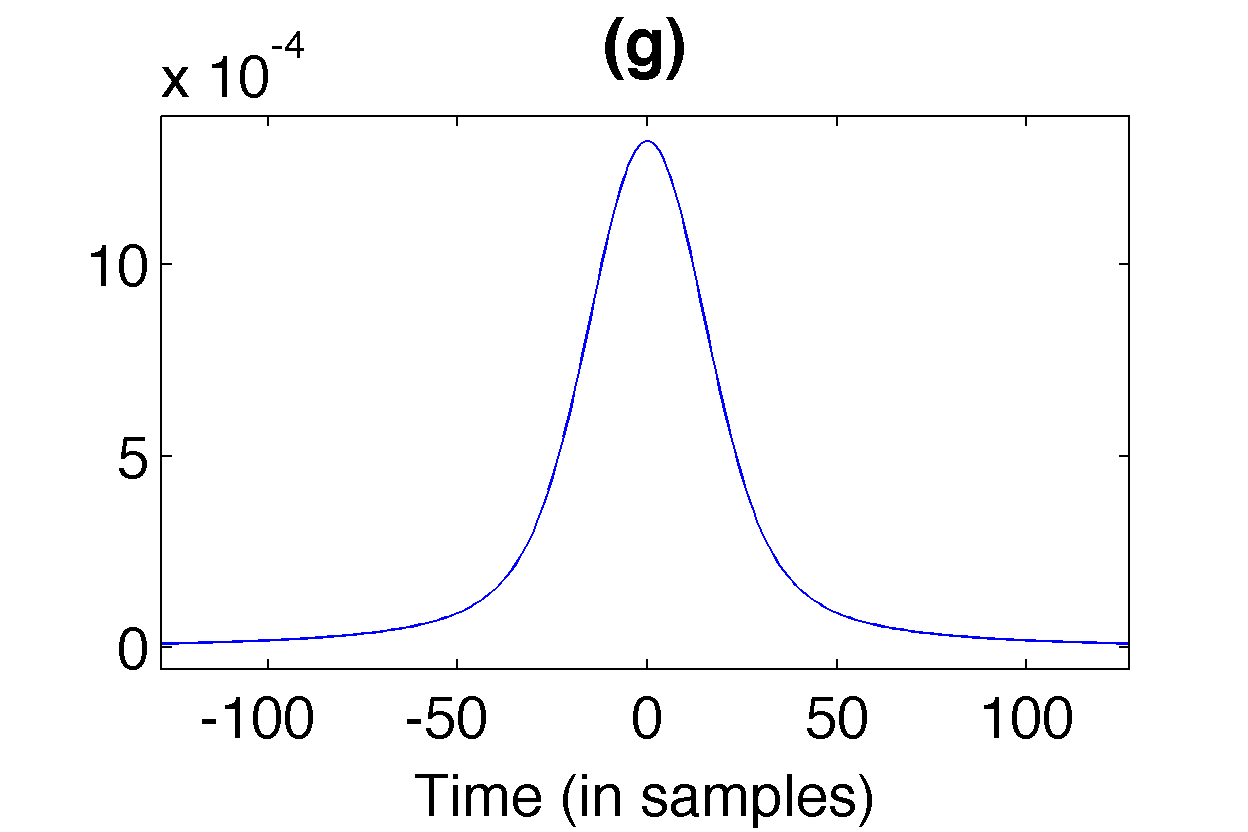}
\includegraphics[height=0.26\textwidth,width=0.48\textwidth]{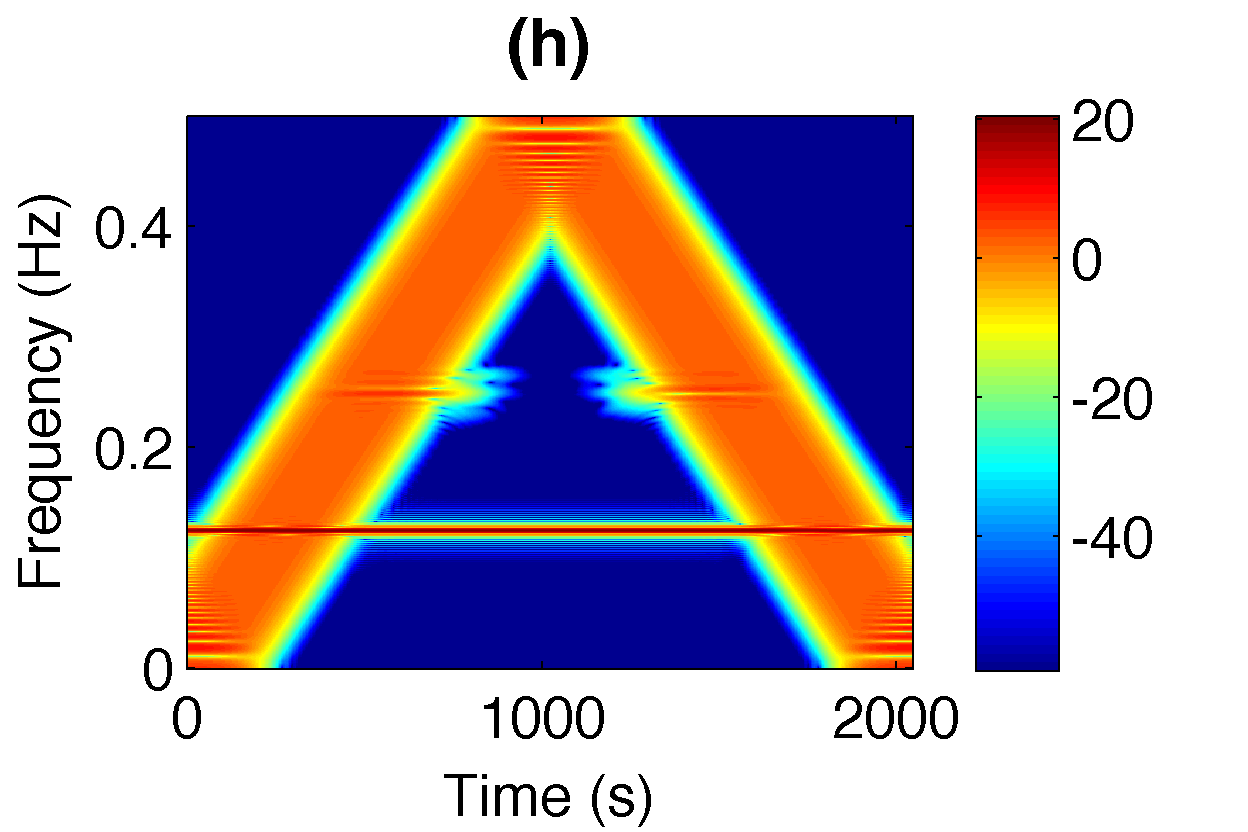}
  \end{minipage}
\end{center}
  \caption{Reconstruction from modified coefficients: (a) analysis window used to compute the spectrogram\protect\footnotemark,
 (b) Spectrogram of a synthetic test signal, (c) Unachievable 'oracle' target spectrogram, (d) modified spectrogram. (e)(g) $2$ different dual windows and (f)(h) spectrograms after synthesis from modified coefficients. Note that the \emph{smearing} effect depends on the concentration of the synthesis window. 
 \label{fig:gabor transform}
 }
\end{figure}

In the given example, the first synthesis operation was performed with the canonical dual window, while an alternative dual window with improved time concentration was computed for the second. For redundant Gabor filterbanks, infinitely many alternative dual windows exist, enabling the synthesis system choice based on the requirements of the desired application~\cite{badokowto13}. Yet, traditional methods utilize the so-called \emph{canonical dual} window, the only dual window that can be obtained directly by applying a linear operator (the inverse frame operator) to the original window.
\footnotetext{The spectrogram is the squared magnitude of a Gabor-type TF representation.}

We propose a convex optimization scheme that selects a dual window 
 such that one or more regularization parameters of the user's choice are optimized. 
 As the solution of a convex optimization problem, the proposed window will be optimal with regards to the selected criteria, provided that the set of admissible dual windows is nonempty.\\

\emph{Innovations of this contribution:}
We propose a general framework to compute Gabor dual windows satisfying individually selected criteria, expressed in a convex optimization problem. For this purpose, we combine frame-theoretical results with modern convex optimization. In particular, we demonstrate how our method can be used to provide dual windows with small support, essential for efficient computation, or with optimized time-frequency concentration. Employing various common measures of time-frequency concentration, our experiments show that these criteria can be considerably improved upon the widely-used canonical dual window. 

Through the selection of various optimization parameters, our method can be used to explore the set of dual Gabor windows, with the aim to better understand the restrictions imposed by the duality condition. 

Finally we introduce a, purely heuristic, method to design tight windows, based on the proposed convex optimization technique. Although the tight window problem is not anymore a convex problem and we cannot provide any convergence guarantee, we observe experimentally that favorable results can be obtained.

The implemented procedure is available as part of comprehensive open-source toolboxes maintained by the authors. An online addendum provides extended computational results and script files reproducing all the presented experiments.\\

\emph{Related work:}
A wealth of research investigating the subject of dual Gabor windows exists. On the theoretical side, state-of-the-art results are available in~\cite{gr01,ch08}, support properties of dual windows are investigated in~\cite{chkiki10,chkiki12,bo99,grst13,christensen2015entire} and some specific constructions for dual window pairs are presented in~\cite{ch06,chki10,chgo12,la09-2}, to name a few. Methods from complex analysis have been used to construct Gabor frames with Gaussian or Hermite windows~\cite{grochenig2007gabor,abreu2012banach,seip1992density}. Filters optimizing classical window quality measures are constructed in~\cite{Harris78,nu81-1}, without frame theoretic considerations. Recent results on phase space covers~\cite{romero2011surgery,romero2012characterization}, based on a collection of Gabor systems, require certain decay of the dual windows, not necessarily provided by the canonical dual. Thus, the study of alternate dual windows is necessary. 

A method for computing dual windows satisfying specific support constraints was proposed by Strohmer~\cite{st98-8}, based on the Moore-Penrose pseudoinverse of a linear equation system describing duality and the support conditions. The method therein allows the use of other regularization constraints, if these can be expressed through a Hermitian positive definite matrix, e.g. weighted $\ell^2$-norms. Already Wexler and Raz~\cite{wexler1990discrete} impose linear constraints to find alternative dual windows, while Daubechies et al.~\cite{daubechies1994gabor} present a formula for finding dual windows that are optimal in a 
modified $L^2$ sense. More similar to our approach, the authors in~\cite{li2012sparse} solve a different convex optimization problem to find sparse dual systems using a weighted $\ell^1$-norm. They also provide a method to obtain compactly supported dual windows. This problem is optimized for the research of sparse windows and not suitable for other constraint such as smoothness. With the method proposed in this paper, similar results can be obtained, but sparse dual windows only form a particular example for its possible applications. The existence of sparse dual frames is also investigated in~\cite{Krahmer2013982}, without assuming a filterbank structure.

Recently, convex optimization in the context of signal processing has grown into a active field of research and in particular proximal splitting methods~\cite{combettes2011proximal,combettes2007douglas,combettes2005signal} have been used to great effect, e.g. in audio inpainting~\cite{adler2012audio,adler2011constrained} and sparse representation~\cite{kowalski2012social}. In those cases, optimization techniques are applied directly to the signal or its time-frequency (TF) representation. In this contribution, we apply optimization techniques to shape the building blocks of the TF representation instead.\\

\emph{Motivation and potential applications:}
Motivated by Figure~\ref{fig:gabor transform} it seems clear that an optimization of the dual window can have significant relevance for acoustical applications. However, to our knowledge, alternate duals are not yet used in that field, while there are already some signal processing investigations using alternate duals for more than a decade, see e.g.~\cite{4156434}.
That is mainly because the acceptance of frame theory by engineers in signal processing is an ongoing process~\cite{boelc1,vettkov1}, and still not fully completed. 
As a consequence, frame theory, and with it the usage of the canonical dual window, has been established as a useful tool in audio and acoustical applications only rather recently~\cite{framepsycho16}. 
While already before the acceptance of frame theory some investigation in the tuning of the re-synthesis stage has been performed (for example in computational auditory scene analysis~\cite{wanbro06}, or speech processing~\cite{quat01}), 
applied scientists often used the same window~\cite{po76} or even the rectangular window in the overlap-add approach~\cite{opp}. 

In this paper we provide intuitive ideas and efficient tools for applied scientists to work with alternate dual windows. We are convinced that many applications will benefit form optimized dual windows, as soon as a coefficient domain modification is attempted. Such modification is ubiquitous in signal processing, e.g. in denoising~\cite{majxxl10}, signal detection~\cite{Hall1984}, time-stretching and pitch-shifting~\cite{zo02,liro13,drmu16,prho17-1}, modification of the spectrogram~\cite{griflim84,nathxxl13}, irrelevance filtering~\cite{Necciari2012146,xxllabmask1}, speech recognition~\cite{NarWan13}, to name a few. Furthermore, jointly optimizing support and smoothness of the synthesis window can provide block-processing algorithms with reduced delay and blocking artifacts, possibly even at lower redundancy than usual. Without a doubt, optimized dual windows  can have numerous applications in the future.\\

\emph{Organization of the paper:}
This work is an extension of~\cite{persampta13}. For the sake of being self-contained, we repeat some results presented therein. 
We begin by recalling the essential background from convex optimization, as well as important concepts from the theory of Gabor frames. In particular, we are interested in the characterization of dual Gabor windows by means of a linear equation system and we show that compactly supported dual window pairs are dual independent of the signal length. The convex optimization problem central to our investigation is introduced in Section~\ref{sec:design}, where we also discuss the considered optimization criteria and their effects in the setting of dual Gabor windows. Finally, Section~\ref{sec:results} presents a set of examples designed to illustrate how to use our method for optimizing or adjusting the time and frequency concentration of the dual window for a given starting window. We compare the results visually, in terms of the optimization criteria and classical measures of window quality. Further numerical experiments demonstrate the construction of smooth dual windows with short support and good frequency 
concentration 
and how the proposed optimization scheme can be employed, albeit heuristically, for finding 
time-frequency concentrated, compactly supported tight windows. 

\section{Preliminaries}

In this contribution, we consider sampled functions, i.e. sequences $f$ in $\ell^{2}(\ZZ)$ or $\CC^L$. The latter is interpreted as the space of $L$-periodic sequences with indices considered modulo $L$.
For such $f$, we refer to the smallest closed interval containing all nonzero values of $f$ as its support, denoted by $\supp(f)$. By $x|y$ we denote that $y/x\in\ZZ$.

Furthermore, we denote \emph{translation} and \emph{modulation} operators
\[
 \bd T_{n}f[l] = f[l-n] \text{ and } \bd M_\omega f[l] = f[l]e^{2\pi i \omega l},
\]
 for $f\in\ell^2(\ZZ)$, $l,n\in\ZZ$ and $\omega\in [0,1)$. Their counterparts on $\CC^L$ are defined in the usual way with indexing modulo $L$.
 

\subsection{Convex optimization and proximal splitting}\label{ssec:formandalg}

In subsequent sections we design Gabor dual windows as the solution to convex optimization problems of the 
form 
\begin{equation}
  \underset{x\in\mathbb{R}^{L}}{\text{minimize}}\sum_{i=1}^{K}f_{i}(x),\label{general_problem_proximal}
\end{equation}
where the $f_{i}$ are convex functions that promote certain features in the solution. Those functions are also referred as \emph{(regularization) priors}. Various methods exist to solve this kind of 
problem for differentiable priors~\cite{hager2006survey}. However, proximal splitting methods~\cite{combettes2011proximal} only require the $f_i$ to be lower semi-continuous, convex and proper\footnote{A proper function $f$ on a domain $C$ is function satisfying $f(x)>-\infty, \forall x \in C$ and $\exists x \in C \text{ such that } f(x)<\infty$} functions, thus allowing to increase the design freedom. Those methods solve Equation~\eqref{general_problem_proximal} by iteratively applying the proximity operator
$$\prox_{f}(y):=\mathop{\operatorname{arg~min}}\limits _{x\in\mathbb{R}^{L}}\left\{ \frac{1}{2}\|y-x\|_{2}^{2}+f(x)\right\}$$ to each prior $f_i$. More information and convergence results can be found 
in~\cite{rockafellar1976monotone,martinet1972determination,combettes2011proximal}. 

Restriction of the optimization to a convex subset $\mathcal C$ of $\RR^L$, e.g. the set of dual Gabor windows, is achieved by selecting the indicator function 
\begin{equation}
i_{\mathcal C}:\mathbb{R}^{L}\rightarrow\{0,+\infty\}:x\mapsto\begin{cases}
0,\hspace{0.25cm} & \text{if}\hspace{0.25cm}x\in \mathcal C\\
+\infty\hspace{0.25cm} & \text{otherwise.}
\end{cases},\label{eq:indfun}
\end{equation}
as prior. Its proximity operator is given by the orthogonal projection on $\mathcal C$.

\subsection{Gabor systems, frames and dual windows}\label{ssec:gabframes}

We define the Gabor system 
\begin{equation}
\mathcal{G}(g,a,M):=\left(g_{m,n}=\bd \bd M_{m/M} T_{na}g\right)_{n\in\ZZ,~m=0,\ldots,M-1}
\end{equation}
for a given window $g\in\ell^{2}(\ZZ)$, a hop size $a\in\mathbb{Z}$ and a number of frequency bins $M \in\mathbb{Z}$.

For any signal $f\in\ell^{2}(\ZZ)$, the \emph{Gabor coefficients} (or Gabor analysis) with respect to $\mathcal{G}(g,a,M)$ are given by 
\begin{equation}\label{eq:gabana}
(\bd{G}f)[m+nM]=\langle f,g_{m,n}\rangle=\sum_{l\in\ZZ}f[l] \, \overline{g_{m,n}[l]},
\end{equation}
with the analysis operator $\bd{G}$ given by the infinite matrix
$\bd{G}[m+nM,l]:=\bd{G}_{g,a,M}[m+nM,l]:=\overline{g_{m,n}[l]}$.

\emph{Gabor synthesis} of a coefficient sequence $c\in\ell^2(\ZZ)$ with respect to $\mathcal{G}(g,a,M)$ is performed by
\begin{equation}\label{eq:gabsyn}
f_{syn}[l]=(\bd{G}^{\ast}c)[l]=\sum_{n\in\ZZ}\sum_{m=0}^{M-1} c[m+nM] \,g_{m,n}[l],
\end{equation}
where $\bd{G}^{\ast}$ denotes the transpose conjugate of $\bd{G}$. 
In $\CC^L$, a Gabor transform with $a=1$ and $M=L$ is also known as (full) \emph{short-time Fourier transform} (STFT).
This highly overcomplete setup allows for straightforward inversion using the synthesis operator, i.e. $f = \bd{G}^{\ast}\bd{G}f$~\cite{gr01}. Otherwise, 
we require that $\mathcal{G}(g,a,M)$ forms a stable, (over-)complete system satisfying
\begin{equation}
A\|f\|_{2}^{2}\leq\|\bd{G}f\|_{2}^{2}\leq B\|f\|_{2}^{2},\ \text{for all}\ f\in\ell^{2}(\ZZ),
\end{equation}
for some $0<A\leq B<\infty$, i.e. a \emph{Gabor frame}~\cite{ch08}. In that case, every signal $f\in\ell^{2}(\ZZ)$ can be written as 
\begin{equation}\label{eq:gabsynpr}
f=\bd{G}^{\ast}_{g,a,M}c
\end{equation}
for some coefficient sequence $c\in\ell^{2}(\ZZ)$. A frame is \emph{tight}, if $A=B$ is a valid choice. Then $h=g/A$ is
a dual window. 
The frame property guarantees the existence of a dual Gabor frame $\mathcal{G}(h,a,M)$ such that 
$f= \bd{G}^{\ast}_{h,a,M}\left(\bd{G}_{g,a,M}f\right)=\bd{G}^{\ast}_{g,a,M}\left(\bd{G}_{h,a,M}f\right)$
holds for all $f\in\ell^2(\ZZ)$. Hence, $c$ in~\eqref{eq:gabsynpr} can be chosen to be the Gabor coefficients
with respect to $\mathcal{G}(h,a,M)$. 

If $\mathcal{G}(g,a,M)$ is redundant, then the \emph{dual window} $h\in\ell^2(\ZZ)$ is not unique. 
Instead, the space of dual windows equals the solution set of the 
\emph{Wexler-Raz (WR) equations}~\cite{wexler1990discrete,ltfatnote006}, that 
characterize the dual Gabor windows for $\mathcal{G}(g,a,M)$. They are given
by 
\begin{equation}
\begin{split} \frac{M}{a}\left\langle h,g[\cdot-nM]e^{2\pi im\cdot/a}\right\rangle =\delta[n]\delta[m]  \\
\text{ or } \bd{G}_{g,M,a}h = [a/M,0,0,\ldots]^T, \label{eq_wexler_raz} \end{split}
\end{equation}
for $m=0,...,a-1,\ n\in\ZZ$. Here, $\delta$ denotes the Kronecker delta. Note that, while $\bd{G}_{g,a,M}$ is overcomplete, $\bd{G}_{g,M,a}$ is underdetermined and admits
infinitely many solutions, whenever $a<M$.

The WR equations form the central step towards the formulation of the Gabor dual problem 
in the context of convex optimization. Any function in this set facilitates perfect reconstruction from unmodified coefficients,
but some are better suited for synthesis from processed coefficients than others, see Figure~\ref{fig:gabor transform}. 
From now on, we will denote by $\mathcal{C}_{dual}$ the solution set of the nontrivial WR equations, forming the basic constraint of the considered optimization problem.

The \emph{canonical dual window}, defined via the pseudoinverse of the analysis operator $\bd{G}_{g,a,M}$, is the only widely used dual. It can be computed efficiently, see e.g.~\cite{xxlfei1,ltfatnote007}. 
We note that the canonical dual window $\gamma$ minimizes the $\ell^2$-norm as well as the $\ell^2$-distance to $g$ among all duals, see e.g~\cite[Prop. 7.6.2]{gr01}. 
Unless certain very specific conditions are satisfied, the canonical dual is infinitely long~\cite{bo99}, preventing finite time synthesis. The most prominent setup that provides a compactly supported canonical dual
is the \emph{painless case}, i.e.  when the length of $g$ is less or equal than the number of channels $M$. Therefore the setup
where the length of the window equals the number of channels is omnipresent in signal processing, to the point where these two numbers are sometimes not
distinguished. For integer redundancy, the conditions in~\cite{bo99} are even equivalent to the painless case.\\

\textbf{Gabor dual windows beyond the canonical dual:}
  A considerable amount of research on alternative Gabor dual windows has been conducted, mostly concerned with finding dual pairs of windows with compact support. Such results often consider special configurations of analysis window~\cite{chki10,la09-2} and/or Gabor parameters~\cite{ch06,chkiki10,chkiki12}. While compactly supported duals play a central role in this contribution, our method admits further design freedom and does not impose constraints on the analysis window or Gabor parameters. To ensure efficient computation, it is crucial to establish the independence of the duality conditions from the signal length $L$, for compactly supported pairs of dual windows. This property, while widely accepted in the community, seems not to have found its way into the literature explicitly. Since it forms a central point of our argument, we will now state the result including a short proof. We now assume the existence of finite intervals $I_g$, $I_h$ such that $\supp(g)\subseteq I_g$ 
for the analysis window and $\supp(h)\subseteq I_h$ for the solution dual window.

\begin{Lem} \label{sec:dualfin1}
  Let $I_g$, $I_h$ be intervals of length $L_g$ and $L_h$ with nonempty intersection. For any Gabor system $\mathcal G(g,a,M)$ with $\supp(g)\subseteq I_g$ and any $h\in\ell^2(\ZZ)$ with $\supp(h) \subseteq I_h$, all but $a\lceil\frac{L_{g}+L_{h}}{M}\rceil$ of the WR equations are trivally satisfied. Moreover, if $\langle h, g \rangle_{\ell^2}=a/M$, then the following are equivalent:
  \begin{itemize}
   \item[(i)] $g,h$ are Gabor dual windows on $\ell^2(\ZZ)$ for $a,M$,
   \item[(ii)] For any $L > L_g + L_h$ with $a,M\mid L$, $g_{fin},h_{fin}$, defined by $g_{fin}[l] = \sum_{k\in\ZZ} g[l - kL]$ and $h_{fin} = \sum_{k\in\ZZ} g[l - kL]$ for $l = 0,\ldots,L-1$, are Gabor dual windows
   on $\CC^L$ for $a,M$.
  \end{itemize}
  Moreover, (i)$\Rightarrow$(ii) holds for any $L \geq L_g,L_h$ with $a,M\mid L$.
\end{Lem}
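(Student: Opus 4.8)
The plan is to work directly with the Wexler--Raz inner products, controlling them by support overlap and then linking the $\ell^2(\ZZ)$ inner products to their $\CC^L$ counterparts via a single periodization identity.

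First I would settle the counting claim. The $(m,n)$-th WR inner product $\langle h, g[\cdot-nM]e^{2\pi i m\cdot/a}\rangle$ expands as a sum over $l$ of $h[l]\,\overline{g[l-nM]}\,e^{-2\pi i m l/a}$, and therefore vanishes whenever $\supp(h)$ and $\supp(g)+nM$ are disjoint, i.e. whenever $I_h\cap(I_g+nM)=\emptyset$. Writing $I_g,I_h$ as half-open intervals, this intersection is nonempty only when $nM$ lies in an interval of length $L_g+L_h$, which contains at most $\lceil\frac{L_g+L_h}{M}\rceil$ multiples of $M$. For every remaining $n$ we have $n\neq 0$, so the right-hand side $\delta[n]\delta[m]$ vanishes as well and the equation reads $0=0$; combined with the $a$ choices of $m$, this leaves at most $a\lceil\frac{L_g+L_h}{M}\rceil$ possibly nontrivial equations.

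Second, I would establish the periodization identity that drives the equivalence. Exploiting $a\mid L$ and $M\mid L$, I substitute $g_{fin},h_{fin}$ (the $L$-periodizations of $g,h$) into the finite inner product on $\CC^L$, unfold the periodization of $h$ by the change of variable $l\mapsto l-kL$ (legitimate since both $e^{-2\pi i m l/a}$ and $g_{fin}[\,\cdot-nM\,]$ are $L$-periodic), and then expand the periodization of $g$. Since $nM+kL=(n+kL/M)M$, this shows that the finite $(m,n)$ inner product equals $\sum_{k}$ of the infinite $(m,\,n+kL/M)$ inner products, i.e. a periodization in the translation index with period $L/M$. This identity holds for every $L$ with $a,M\mid L$.

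For (i)$\Rightarrow$(ii) I would simply insert the dual-window values: if $g,h$ are dual on $\ell^2(\ZZ)$, the infinite $(m,n')$ inner product equals $\frac{a}{M}$ for $(m,n')=(0,0)$ and vanishes otherwise, so the periodized sum collapses to $\frac{a}{M}\delta[n]\delta[m]$ on $\CC^L$ and the finite WR equations hold; this uses only $a,M\mid L$ (and $L\geq L_g,L_h$ so the periodizations are honest copies), which yields the final sentence. The harder direction is (ii)$\Rightarrow$(i), where the hypothesis $L>L_g+L_h$ enters decisively. By the first part the nonvanishing infinite inner products are supported on a range of $n$ of length $<L/M$, so each residue class modulo $L/M$ contains at most one index with a nonzero infinite inner product, and the periodization is free of aliasing. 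Hence each finite inner product equals the single infinite term in its class, and the satisfied finite WR equations force that infinite value to the required $\frac{a}{M}\delta[\cdot]\delta[m]$, while indices outside the support range vanish automatically; the standing hypothesis $\langle h,g\rangle=a/M$ pins down the $(0,0)$ equation so that only the nontrivial ones remain to match. I expect the no-aliasing bookkeeping---tracking the correspondence between the index $n$ on $\CC^L$ and its representatives $n+kL/M$ on $\ZZ$, and checking that exactly $n'=0$ lands in the class of $n=0$---to be the main obstacle and the precise point where $L>L_g+L_h$ is genuinely needed.
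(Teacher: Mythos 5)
Your proposal is correct, and its skeleton (support-overlap counting for the triviality claim, transfer of the finite WR inner products to the infinite ones, no-aliasing from $L>L_g+L_h$ for (ii)$\Rightarrow$(i)) matches the paper's proof. The one genuine difference is in how you handle (i)$\Rightarrow$(ii): you first prove the general aliasing identity
\[
\left\langle h_{fin}, e^{2\pi i m\cdot/a}\,\bd T_{nM}g_{fin}\right\rangle_{\CL}=\sum_{k\in\ZZ}\left\langle h,\bd M_{m/a}\bd T_{(n+kL/M)M}\,g\right\rangle_{\ell^2},
\]
valid for any $L$ with $a,M\mid L$, and then simply collapse the right-hand side using the infinite WR equations, which gives $\frac{a}{M}\delta[n \bmod L/M]\,\delta[m]$ in one stroke. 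The paper instead splits into cases: for $L>L_g+L_h$ the periodizations have a single nonzero term and the finite and infinite inner products are literally equal on the relevant index range, while for the weaker condition $L\geq L_g,L_h$ it writes the finite inner product as a two-term sum $\langle h,\bd M_{m/a}(\bd T_{nM}g+\bd T_{(nM\pm L)}g)\rangle$, assuming $I_g,I_h$ centered at the origin and fixing the general case by an index shift. Your uniform identity buys a cleaner treatment of the final claim (it in fact proves (i)$\Rightarrow$(ii) for every $L$ with $a,M\mid L$, subsuming the paper's two cases and avoiding the centering normalization), at the cost of a slightly heavier interchange-of-summation computation up front; the bookkeeping you flag as the main obstacle (a contiguous block of at most $\lceil(L_g+L_h)/M\rceil\leq L/M$ indices meets each residue class mod $L/M$ at most once, and $n'=0$ lies in that block because $I_g\cap I_h\neq\emptyset$) does go through exactly as you anticipate.
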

\begin{proof}
  By assumption there are $n_0,n_1\in\ZZ$, with $n_0\leq 0\leq n_1$ such that $I_{h}\cap(I_{g}+nM)\neq\emptyset$ for $n_0\leq n\leq n_1$ and $I_{h}\cap(I_{g}+nM)=\emptyset$ for every other $n\in\ZZ$. In particular, 
  $n_1-n_0 \leq \lceil\frac{L_{g}+L_{h}}{M}\rceil-1$. Therefore $\langle h, \bd M_{ma^{-1}} \bd T_{nM}g \rangle = 0$ for all $n\in\ZZ$ s.t. $n<n_0$ or $n>n_1$, proving that at most $a\lceil\frac{L_{g}+L_{h}}{M}\rceil$ of the WR equations are not trivial. Now let $L\in\NN$ such that $L > L_g + L_h$ and $M\mid L$. It is easily seen that 
  \begin{equation}\label{eq:truncWR}
   \langle h_{fin}, \exp(2\pi i m\cdot /a)\bd T_{nM}g_{fin} \rangle_{\CL}
   = \langle h, \bd M_{ma^{-1}} \bd T_{nM}g \rangle_{\ell^2} 
  \end{equation}
  holds for all $m = 0,\ldots,a-1$ and $n_0\leq n\leq n_1$, proving (ii)$\Rightarrow$(i). Note that translation on the left side of the equation is circular and $L > L_g + L_h$ guarantees that the sums defining $g_{fin},\ h_{fin}$ possess only a single nonzero term each. To prove (i)$\Rightarrow$(ii), observe that $L > L_g + L_h$ implies $\langle h_{fin}, \exp(2\pi i m\cdot /a)\bd T_{nM}g_{fin} \rangle_{\CL} = 0$ for $n_1< n < L/M - n_0$.
  For the final part, assume for now that $I_g,\ I_h$ are centered around $0$. 
  If $L \geq L_g,L_h$ and $a,M\mid L$, then the sums defining $g_{fin},\ h_{fin}$ possess only a single nonzero term each still and
 \begin{align*}
   \lefteqn{ \delta[n]\delta[m] = \left\langle h_{fin},g_{fin}[\cdot-nM]e^{2\pi im\cdot/a}\right\rangle_{\CL} }\\
   & = \begin{cases}
         \left\langle h,\bd M_{m/a} \left(\bd T_{nM} \, g + \bd T_{(nM+L)}\, g\right)\right\rangle_{\ell^2} & \text{ if } nM \leq L/2, \\
         \left\langle h,\bd M_{m/a}  \left(\bd T_{nM} \, g + \bd T_{(nM-L)}\, g\right)\right\rangle_{\ell^2} & \text{ if } nM > L/2.
       \end{cases}
\end{align*}
If either (or both) of $I_g,\ I_h$ are not centered at $0$, the result is obtained by a suitable index shift in $n$. 
\end{proof}

Note that the condition $\langle h, g \rangle_{\ell^2}=a/M$ can be easily fulfilled be multiplying $h$ with a scalar factor, as long as $g$ and $h$ are not orthogonal to each other.

\textbf{Compactly supported duals by truncation:} 
In 1998,\linebreak Strohmer proposed a simple algorithm
for the computation of compactly supported dual windows when a compactly supported analysis  
window is given~\cite{st98-8}. The algorithm, which we will refer as the \emph{truncation method},
requires no additional restrictions to the analysis system, similar to our own approach. 
The truncation method is based on the fact that a support constraint 
on the dual window is equivalent to deleting the corresponding columns in the
WR matrix~\eqref{eq_wexler_raz} and computing only 
the values that are possibly nonzero.
The resulting equation system is then solved by computing the 
pseudoinverse, obtaining the least-squares solution. While the resulting
windows satisfy the duality conditions, they are not very smooth and
indeed show some discontinuity-like behavior, see Figure~\ref{fig:Experiments-FIR}(e,f).
One of the goals of this contribution is the improvement of these
undesirable effects.

Strohmer's method is not restricted to support constraints, but can be adjusted for the direct computation of 
a dual window $h\in\mathcal{C}_{dual}$ that minimizes $\|\bd{R}h\|_2$, for some Hermitian positive definite matrix $\bd{R}$. 
However,~\cite{st98-8} does not explore this possibility beyond the proposition of weighted $\ell^2$-norm optimization and
the method remains more restrictive than a general convex optimization formulation.

 
\section{Design of optimal dual windows}\label{sec:design}
  For a given Gabor frame $\mathcal{G}(g,a,M)$ the construction of a suitable Gabor dual window supported on an interval $I_h$ can be accomplished by solving 
  \begin{equation}\label{eq:minimizedualsupp}
    \mathop{\operatorname{arg~min}}\limits _{x \in \mathcal{C}_{\text{dual}}\cap \mathcal{C}_{\text{supp}}}\sum_{i=1}^{K}f_{i}(x),
  \end{equation}
  where $\mathcal C_{\text{dual}}$ is the set of dual windows, $\mathcal C_{\text{supp}}$ the set of all functions in $\ell_2(\ZZ)$ supported on $I_h$ and $f_i$ are priors that promote certain features in the solution. 
  In practice, each $f_i$ is weighted by a regularization parameter $\lambda_i>0$ for tuning the quantitative relations between the priors. Moreover, we only consider real-valued, symmetric windows $g$ and real-valued solutions. Those two supplementary constraints are not mandatory for optimization. However, they are used in most applications. For real-valued $g$, the canonical dual window is guaranteed to be real-valued, as a direct consequence of the Walnut representation of the Gabor frame operator~\cite{ch08}. Hence, the set of real-valued dual windows is a nonempty affine subspace of all dual windows.
  Note that the constraint $x\in\mathcal C_{supp}$ can be dropped if a solution only for a specific finite dimensional setup is required. However, when a dual window for $\ell_2(\ZZ)$ or independent from the signal length $L$ is desired, the finite support constraint is mandatory. Beyond this consideration however, reduced support $L_h \ll L$ is often desired to improve the efficiency and to shorten the processing delay. Another minor decrease in complexity can be obtained by reducing the number of frequency channels $M$, while keeping the redundancy $M/a$ fixed, thus favoring non-painless configurations.
  
  The process of selecting and tuning the priors $f_i$ is very flexible and therefore heavily dependent on the intended application, which is reminiscent of the situation for the search of the \emph{optimal} window. Here, we will mainly investigate the optimization of several classical measures of time, frequency and TF concentration. This problem is of particular importance, since joint TF concentration (or equivalently TF smoothness) is crucial for the minimization of artifacts, when performing local modification of TF coefficients in processing application. A list of the priors we consider is provided in Table~\ref{tab:regs} and their effect is discussed in the next section.
  
\begin{Rem}
  For a solution to Equation~\eqref{eq:minimizedualsupp} to exist, obviously $\mathcal C_{dual}\cap \mathcal C_{supp} \neq \emptyset$ is required. It is known~\cite{st98-8} that the WR equations are linearly independent. The same can easily be seen for the equations describing the support set $\mathcal C_{supp}$. However, when jointly considering both equation systems, we have observed linear dependencies for nonrandom analysis windows. Linear dependencies can theoretically lead to unsolvable systems or additional degrees of freedom. However, in practice, we have only observed the latter and controlling the number of equations is usually sufficient for ensuring solvability, but might not be optimal in the sense of minimality. An investigation of this issue is planned for a later contribution.
\end{Rem}

Simulations were performed using the LTFAT~\cite{ltfatnote015} and
the UNLocBoX Matlab toolbox~\cite{perraudin2014unlocbox}. A reproducible research addendum with additional material and MATLAB scripts that 
reproduce the presented results is available in \url{https://lts2.epfl.ch/rrp/gdwuco/}. We refer to this address as the \emph{webpage}.

\subsection{Functionals and proximal operators} \label{sec:prox}
In order to tune the solution of a convex optimization problem~\eqref{eq:minimizedualsupp} towards the properties we desire,
we have to select \emph{priors} $f_i$ that promote these properties. In this contribution, we mostly consider priors that are fairly standard in optimization, or simple extensions of such priors. Their various effects are quite well known, e.g. $\ell^1$ optimization favors solutions with a few large values while an $\ell^2$ prior favors a more even spread of the energy, but considerable limitations are imposed by the duality constraint. Although the set of Gabor dual windows is characterized by the WR equations, their implications in terms of window shape, localization, decay etc. remain largely unexplored. Therefore a short discussion of relevant priors, expected effects and their actual effect in our context seems worthwhile.

All the examples provided in this section were computed with an Itersine\footnote{The Itersine window $g(t) = \sin(0.5\pi\cos(\pi t)^2)\chi_{[-1/2,1/2]}$, where $\chi_{I}$ is the characteristic function of $I$, is designed to form a tight frame in the painless case with half overlap. This is equivalent to the sum of the squared modulus of the translated windows summing to a constant, a property that is retained for any appropriately sampled version.} analysis window with $L_g=60$ $L=240$, $a=15$ and $M=120$, without support constraints. This setup, in particular its high redundancy, allows us to shape the dual windows rather freely for different objective functions, therefore producing characteristic examples. The window is shown in Figure~\ref{fig:demo_iter}. The canonical dual (not depicted), equals the window up to scaling.

\begin{figure}[ht!]
\begin{center}
\includegraphics[height=0.136\textwidth,width=0.23\textwidth]{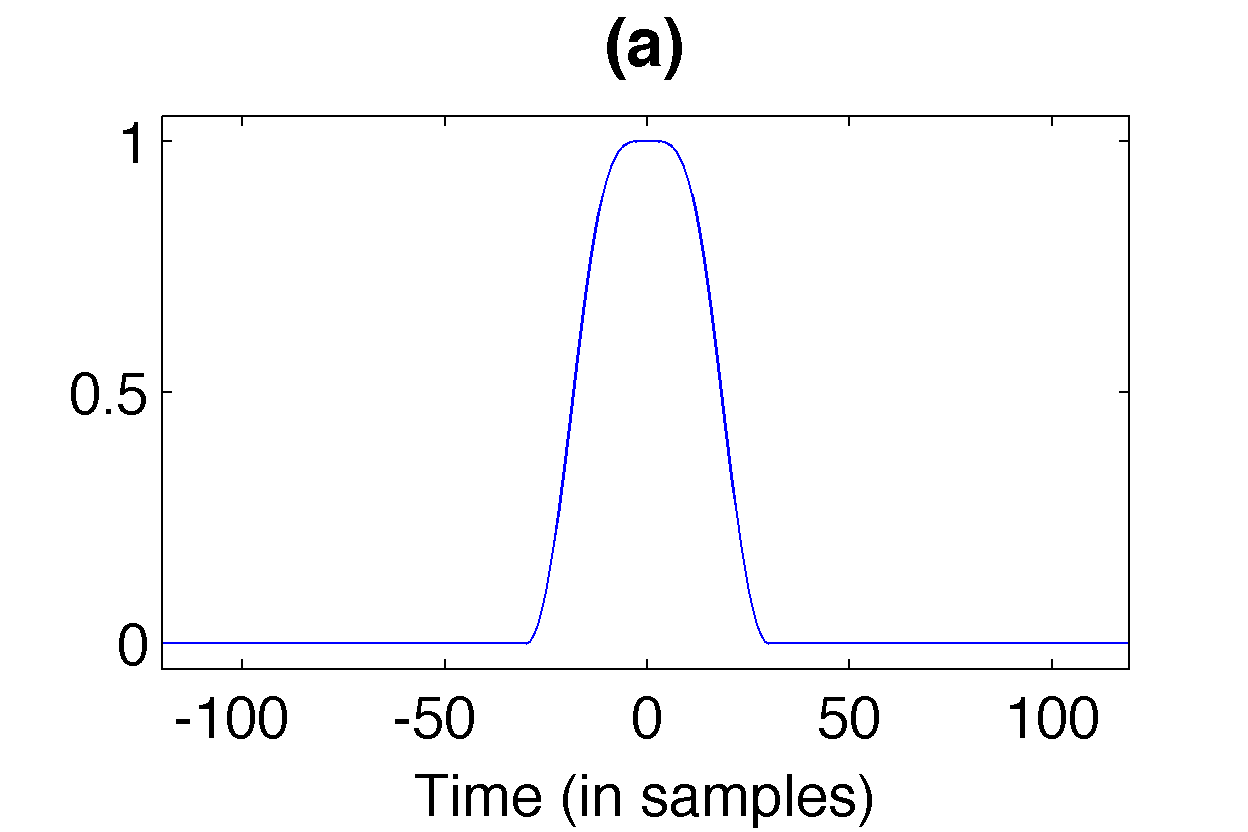}
\includegraphics[height=0.136\textwidth,width=0.23\textwidth]{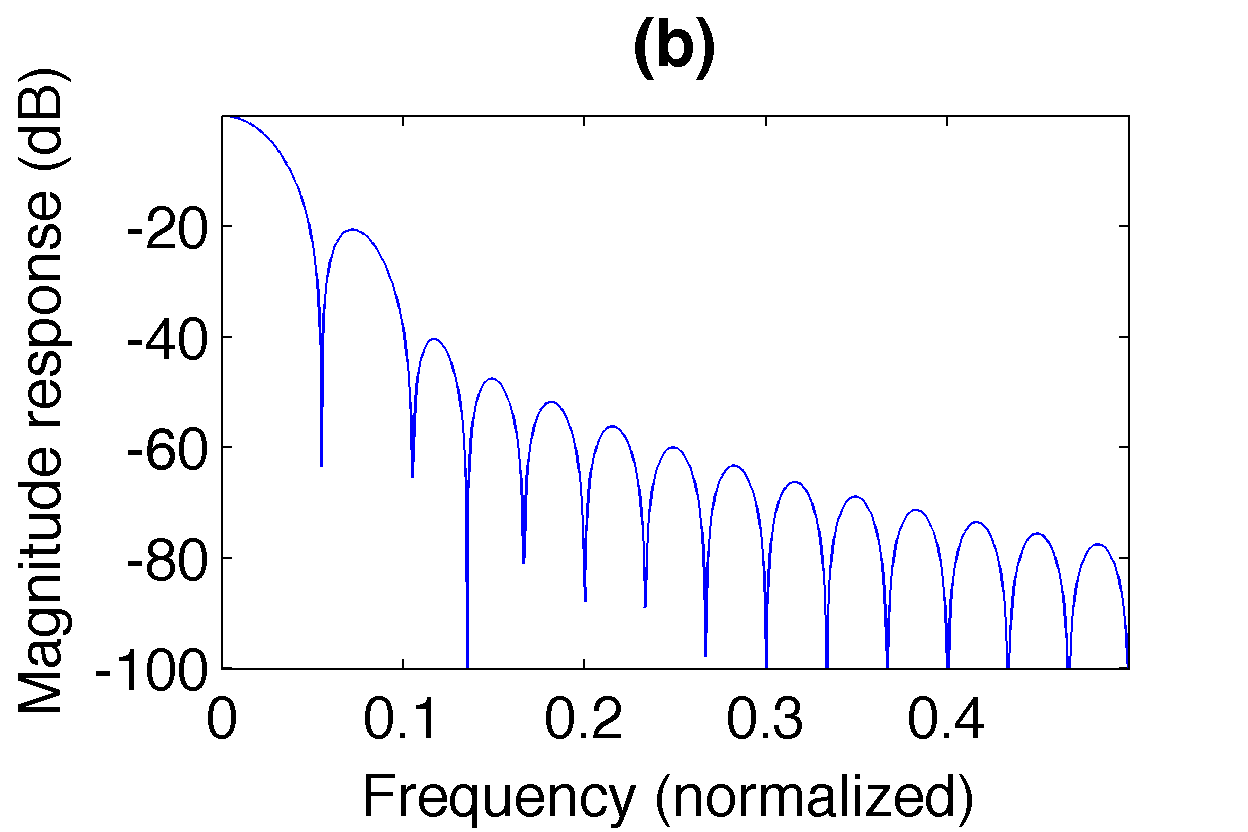}
\end{center}
\caption{\label{fig:demo_iter} The Itersine window and its magnitude frequency response (in dB).}
\end{figure}

$\ell^1$-norm minimization is usually considered, whenever (approximate) sparsity, i.e. a small number of (significant) non-zero values is desired. As the convex relaxation of the $\ell^0$ minimization problem, it is equivalent or at least close to sparsity optimization under certain conditions~\cite{li2012sparse,chen1998atomic}. 
In general, these conditions are not satisfied by Equation~\eqref{eq:minimizedualsupp}. 
Nevertheless, the restrictions imposed by the WR equations usually allow a solution with few large values. Such solutions are favored by the $\ell^1$-norm prior. However, small $\ell^1$-norm alone does not imply clustering of the large values, i.e. a solution supported on a short interval. With the window $g$ concentrated around zero, it is expected however, that any dual window necessarily has non-negligible values around zero. Hence, the optimization of $\ell^1$-norm while enforcing the duality constraint provides a localized dual, see Figure~\ref{fig:prior_l1} (a)(b). In the presented experiment, the $\ell^1$ solution possesses only $15$ values above $-80$~dB (relative to the maximum amplitude) on an interval around $0$, only half the number of WR equations $\frac{L}{M}a=30$. However, other configurations have provided solutions with few significant values spread over a larger interval, see the webpage.

The proximity operator of the $\ell^1$ prior is computed by soft-thresholding:
$$\text{soft}_\mu(y)=\text{sgn}(y) \left(|y|-\mu \right)_+$$ 
where $(\cdot)_+=\max(\cdot,0 )$. For compactly supported dual windows, strict bandlimitation is clearly not feasible. Therefore, when applied in the Fourier domain, the $\ell^1$ prior cannot achieve a truly sparse solution, but promotes a small number of significant values. In many cases, the result is similar to actual concentration measures, compare Figure~\ref{fig:prior_l1}(c)(d) and Figure~\ref{fig:demo_variance2}(a)(b).

\begin{figure}[ht!]
\begin{center}
\includegraphics[height=0.136\textwidth,width=0.23\textwidth]{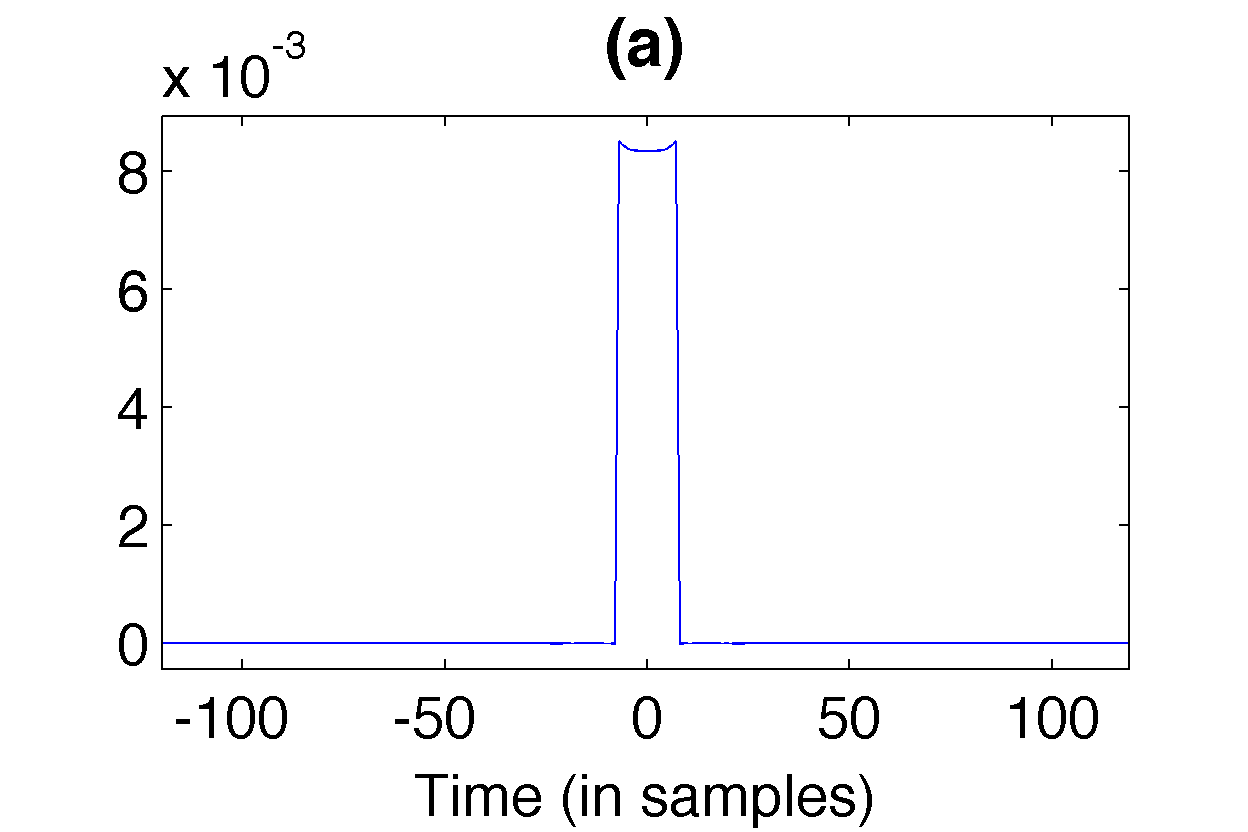}
\includegraphics[height=0.136\textwidth,width=0.23\textwidth]{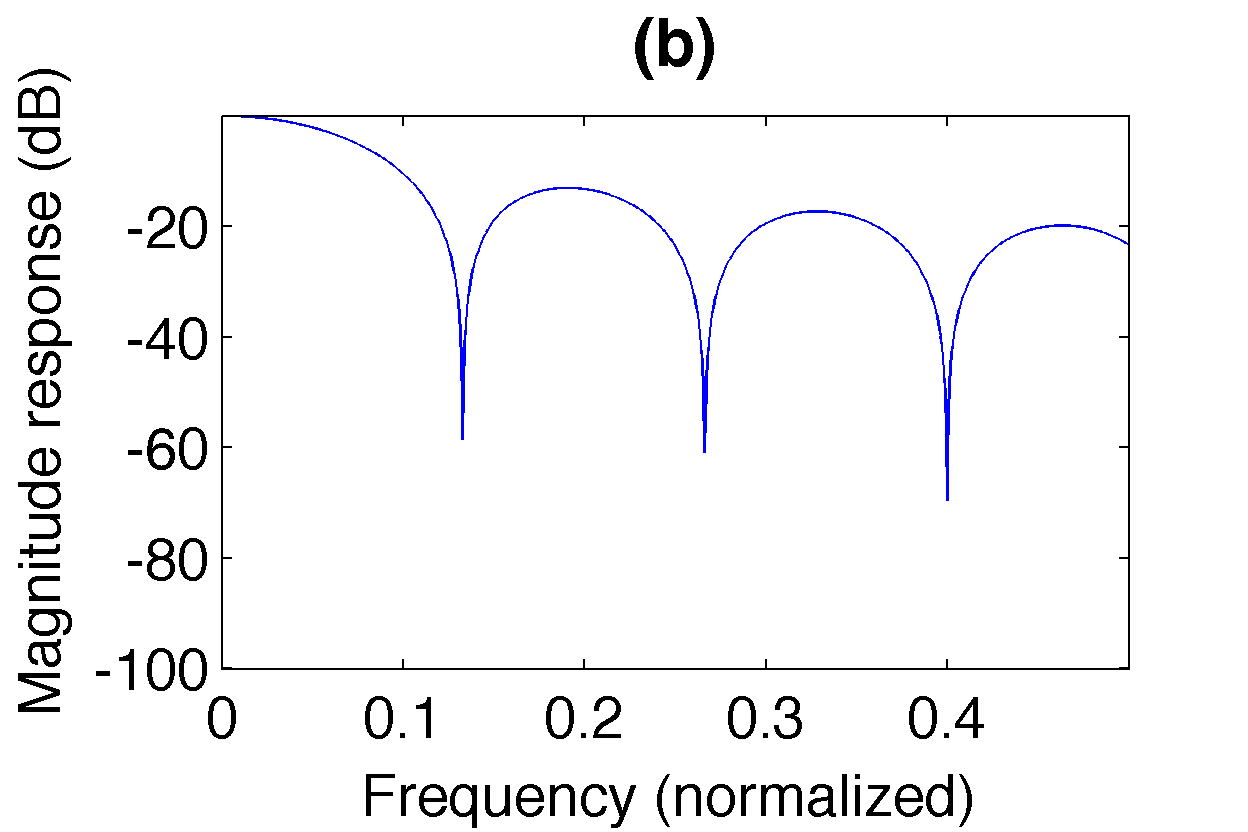}\\
\includegraphics[height=0.136\textwidth,width=0.23\textwidth]{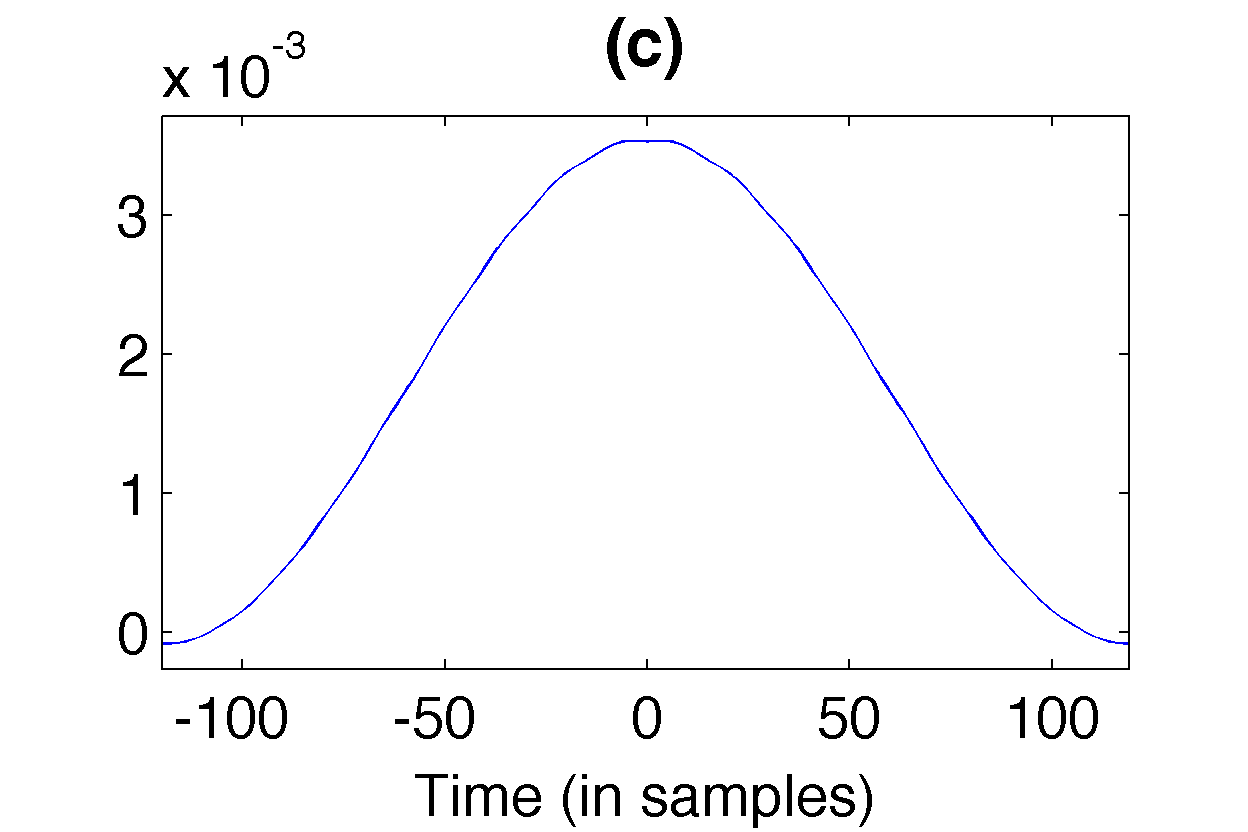}
\includegraphics[height=0.136\textwidth,width=0.23\textwidth]{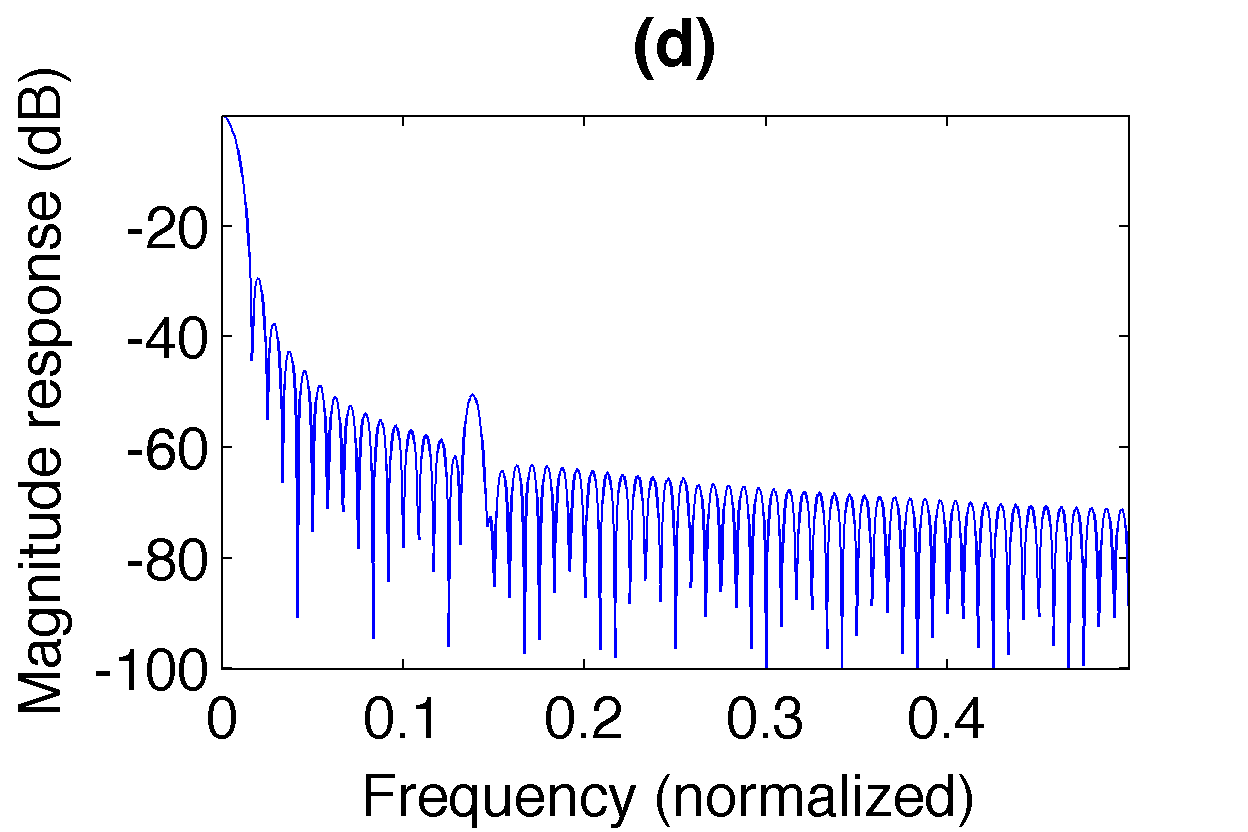}
\caption{Result of $\ell_1$ optimization. Priors: (a)(b) $\|x\|_1$ (c)(d) $\|\mathcal{F}x\|_1$}\label{fig:prior_l1}
\end{center}
\end{figure}

An $\ell^2$ prior will, in our context, always lead to the canonical dual Gabor window. In general, this prior will affect the values in a more proportional way over the whole signal range. 
It is traditionally used as a data fidelity term, i.e. the solution is expected to be close, in the $\ell^2$-norm sense, to a given estimate. 
The associated objective function is not only convex, but also smooth, admitting gradient descent approaches for minimization.\\

\emph{a) Concentration inducing functions:}
Our main objective in the following section will be the search for a Gabor dual window with optimized/modified TF concentration. Therefore we recall a number of different concentration measures.
Inspired by the famous Heisenberg inequality, the most natural way to impose localization is to optimize the variance of the signal $x\in\RR^L$ or more precisely, its modulus:
\[
\text{var}(|x|)= 1/\sqrt{L} \sum_{i=-L/2}^{L/2-1}(i-\overline{|x|})^2|x_i|
\]
with $\overline{|x|}=\sum_{i=-L/2}^{L/2-1}i|x_i|$ being the center of gravity. As we consider symmetric windows $\overline{x}=0$, we can simplify this expression to: $\text{var}(|x|)= 1/\sqrt{L} \sum_{i=-L/2}^{L/2-1}(i)^2 |x_i|$. In that case\footnote{If the center of gravity is not fixed to $0$, the variance is not a weighted $\ell_1$ norm anymore and its optimization is not straightforward. For a symmetric prototype $g$, it is reasonable however to assume the center of gravity of the dual window to coincide with $\overline{|g|}$. This expectation is confirmed by the results we obtain.}, the variance turns out to be a weighted $\ell^1$-norm with quadratic weight $w^2$, $w := \frac{1}{\sqrt{L}}\left[-L/2,\ldots,L/2-1\right]$. Compared to $\ell^1$ minimization, this prior additionally penalizes values far from the origin, inducing concentration. The proximity operator of $\text{var}(|x|)$ is a variation of the $\ell^1$ proximity operator and computed by weighted soft thresholding. 
And example is shown in Figure~\ref{fig:demo_variance}(a)(b).

We also consider the variance of the energy of the signal: $\text{var}(|x|^2)$, for symmetric windows equal to a weighted $\ell^2$ norm with linear weight $w$: $\text{var}(x^2)=\|w \cdot x\|_2^2$. Explicit computation of the proximity operator leads to 
\begin{equation} \label{eq:prox_var2}
\prox_{\gamma \text{var}(x^2) }(y) = \frac{1}{1+2\gamma w^2} y,
\end{equation}
i.e. multiplication with a function that decays quadratically away from zero, see Figure~\ref{fig:demo_variance}(c)(d).

A closely related concentration measure is smoothness in frequency, as measured by the gradient of the Fourier transform $\|\nabla \mathcal{F} x\|_2^2$. Indeed, the resulting proximity operator has almost the same form: 
\begin{equation} \label{eq:prox_grad_fourier}
\prox_{\gamma \|\nabla \mathcal{F} x\|_2^2 }(y) = \frac{1}{1+2\gamma \psi} y
\end{equation}
with $\psi[l] = 2-2 \cos \left( \frac{2\pi l}{L} \right)$. Since $\psi[l] \approx C l^2$ for small $l$ and values away from $0$ are strongly attenuated, the priors $\text{var}(|x|^2)$ and $\|\nabla \mathcal{F} x\|_2^2$ often lead to similar results. Both functions induce concentration by attenuation of values far from the origin. Examples are shown in Figure~\ref{fig:demo_variance}(e)(f).

Concentration in frequency is easily achieved through $\text{var}(|\mathcal{F}x|)$, $\text{var}(|\mathcal{F}x|^2)$ or $\|\nabla x\|_2^2$. The respective proximity operators are obtained simply by conjugating the proximity operators discussed above with the (inverse) Fourier transform. For examples, see Figure~\ref{fig:demo_variance2}.

\begin{figure}[ht!]
\begin{center}
\includegraphics[height=0.136\textwidth,width=0.23\textwidth]{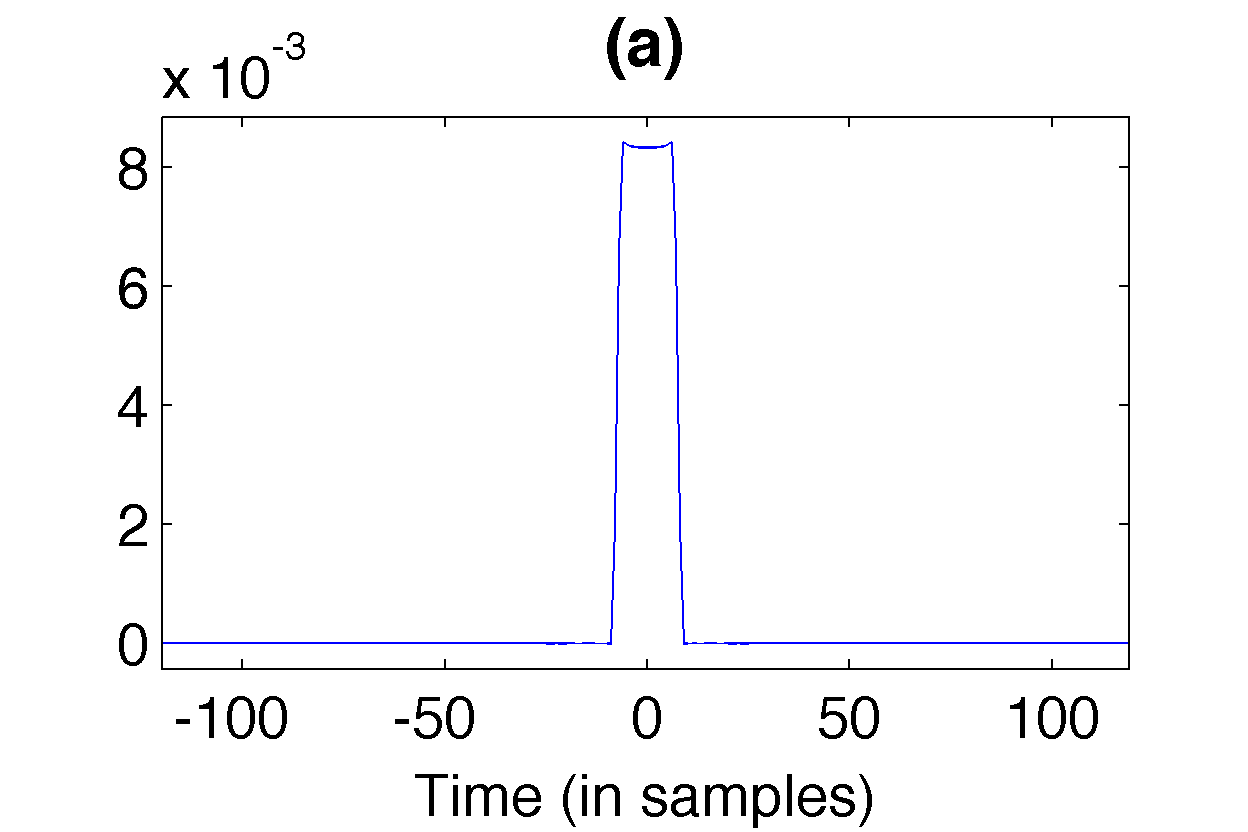}
\includegraphics[height=0.136\textwidth,width=0.23\textwidth]{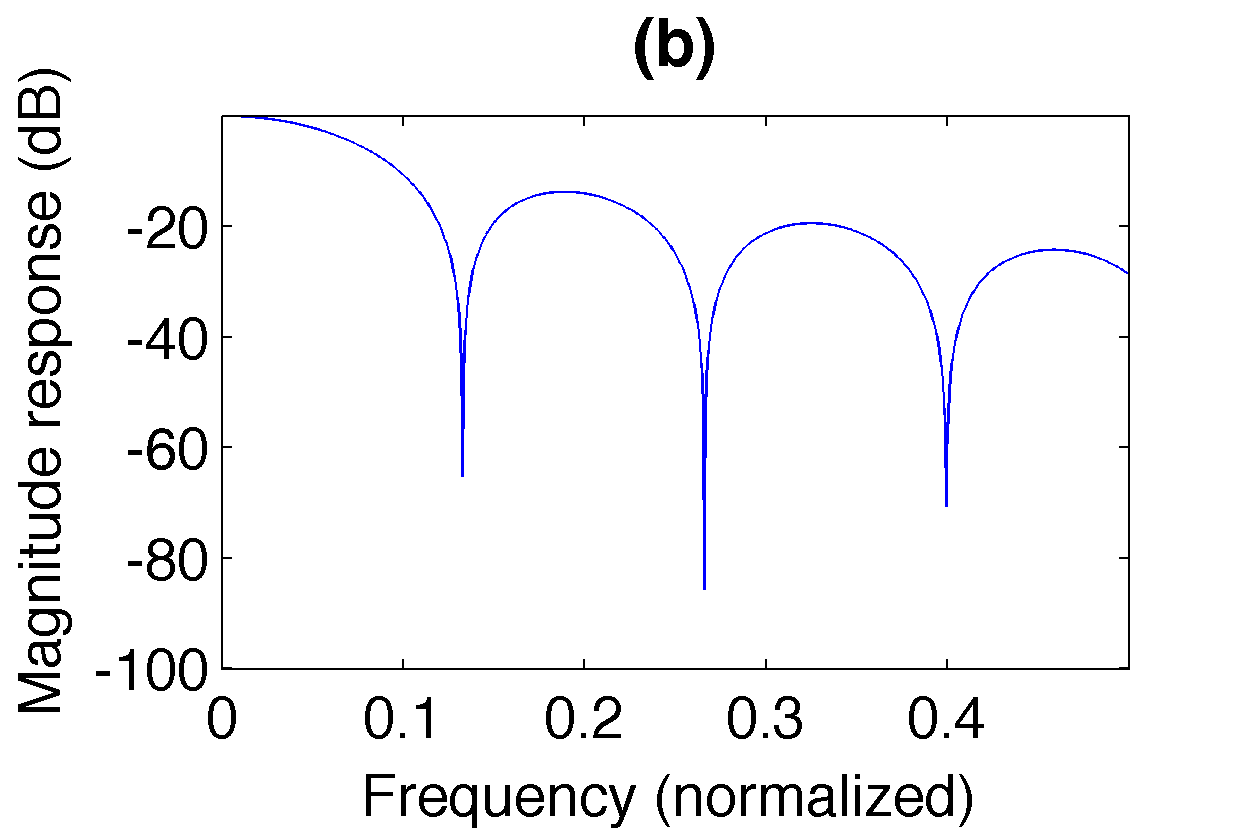}\\
\includegraphics[height=0.136\textwidth,width=0.23\textwidth]{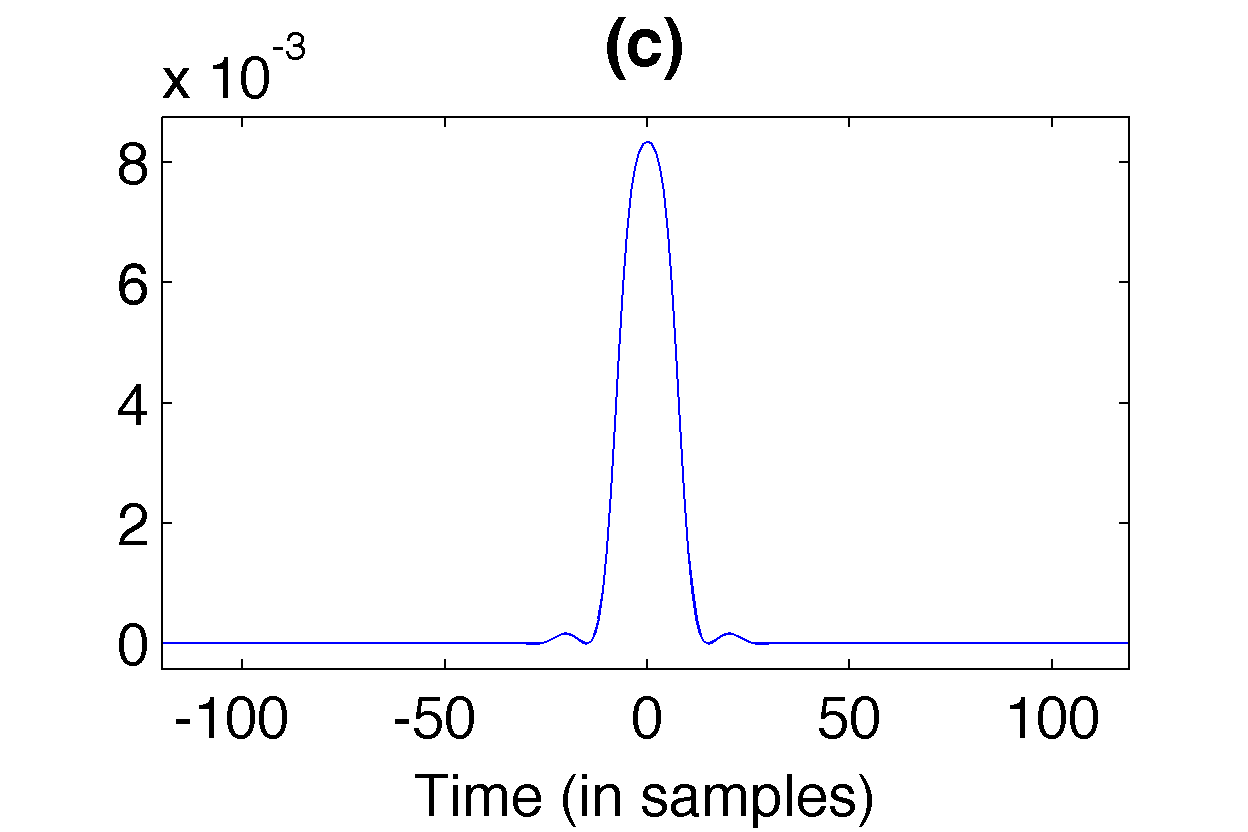}
\includegraphics[height=0.136\textwidth,width=0.23\textwidth]{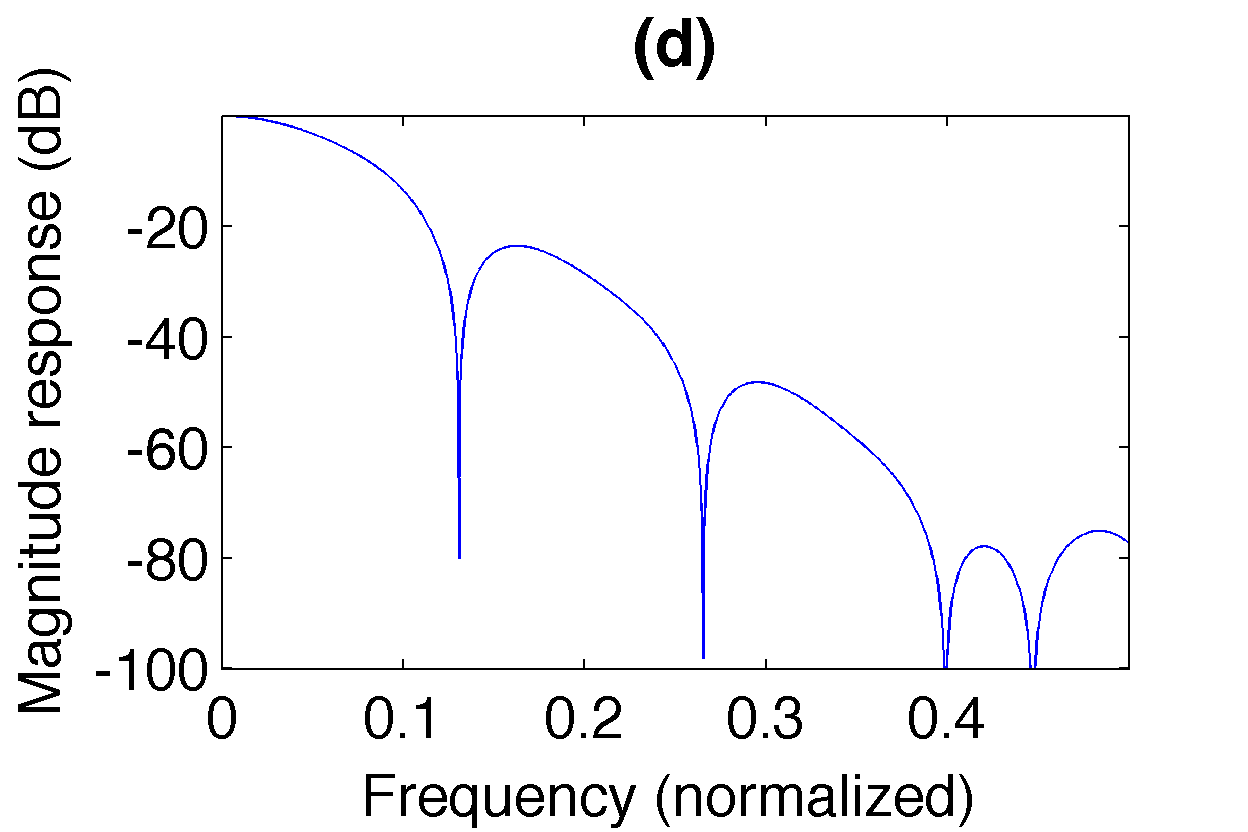}\\
\includegraphics[height=0.136\textwidth,width=0.23\textwidth]{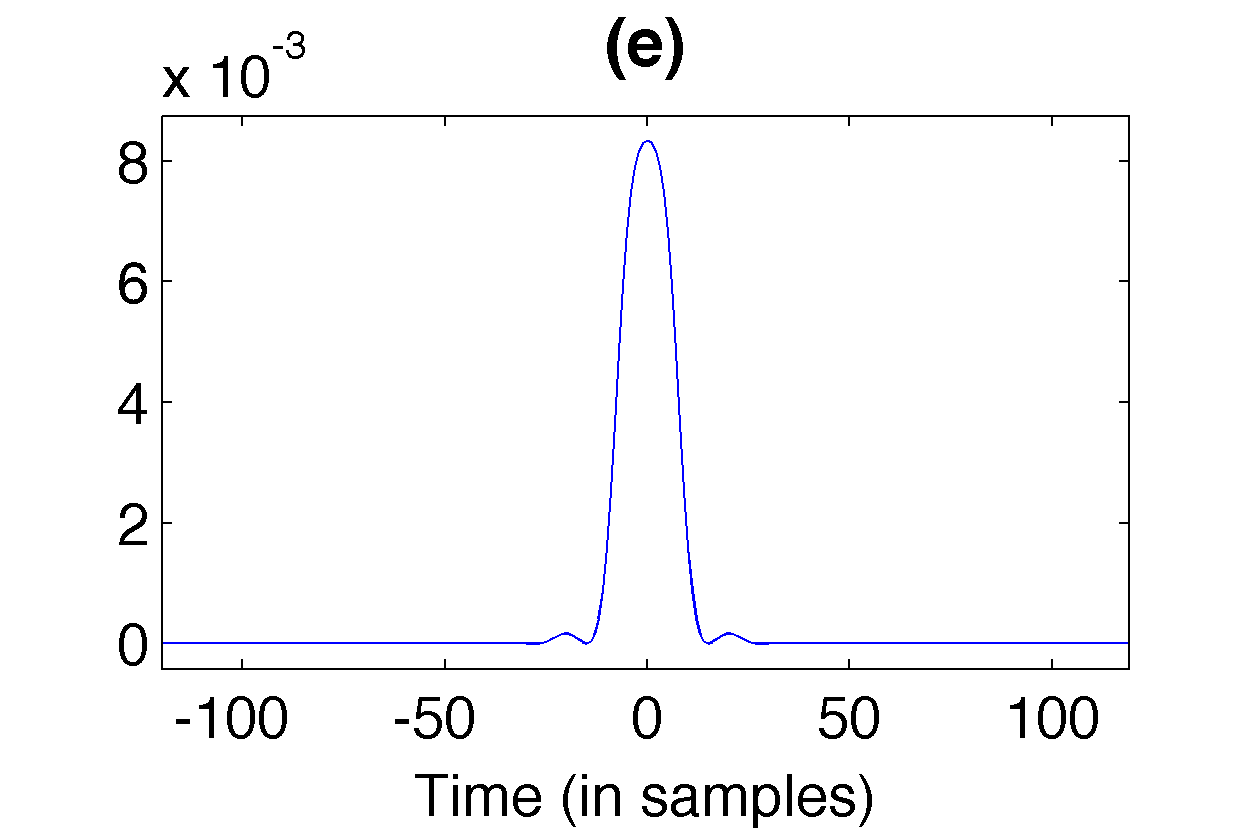}
\includegraphics[height=0.136\textwidth,width=0.23\textwidth]{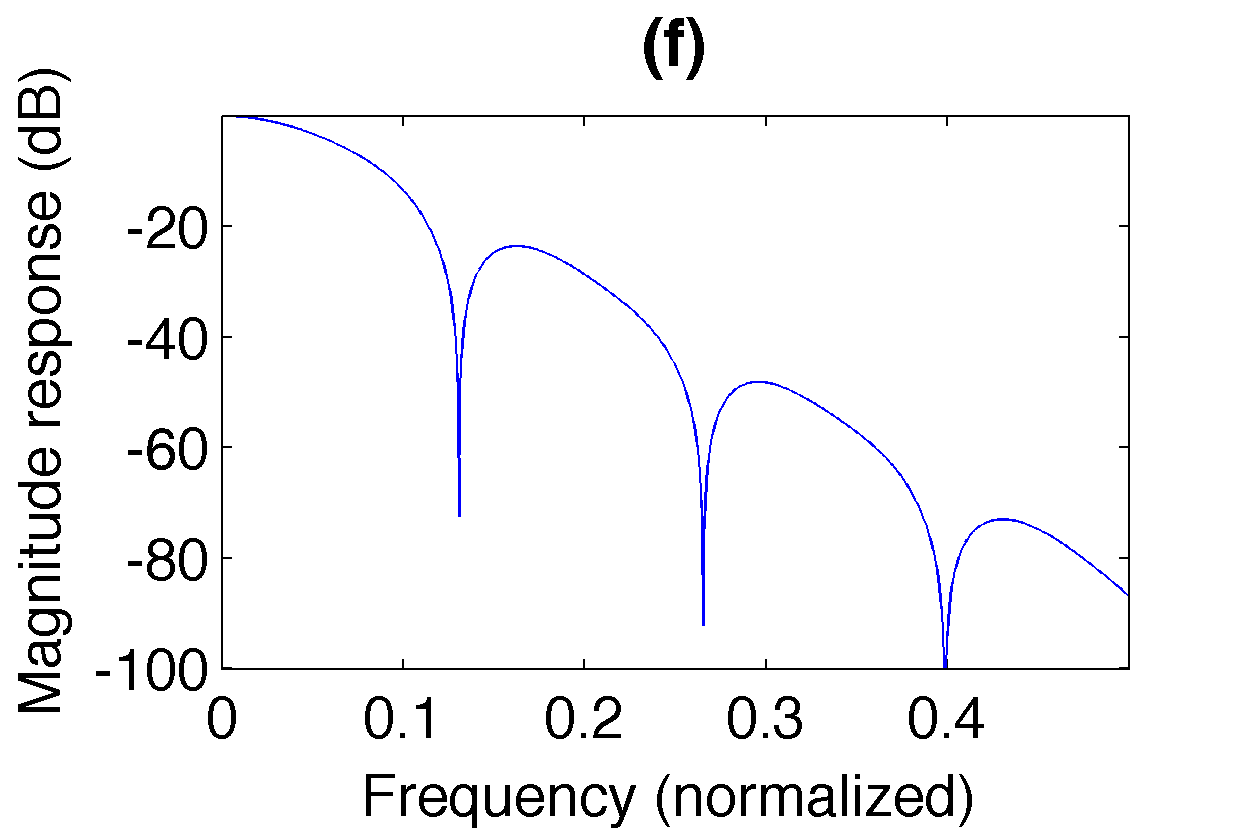} 
\end{center}
\caption{\label{fig:demo_variance} 'Time-optimized' dual windows and their magnitude frequency response (in dB). Priors: (a)(b) $ \text{var}(|x|) $. 
(c)(d) $ \text{var}(|x|^2)$. (e)(f) $\|\nabla \mathcal{F}x\|_2^2$}
\end{figure}

\begin{figure}[ht!]
\begin{center}
\includegraphics[height=0.136\textwidth,width=0.23\textwidth]{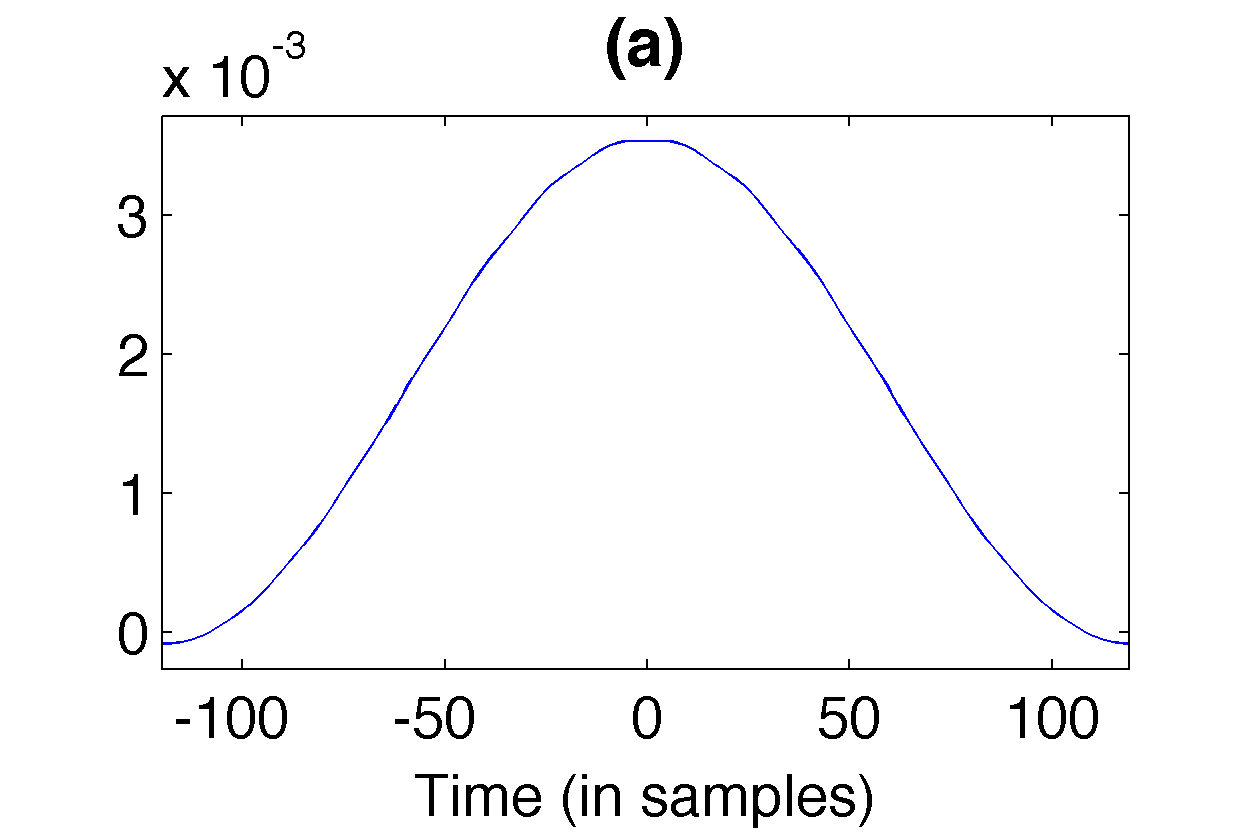}
\includegraphics[height=0.136\textwidth,width=0.23\textwidth]{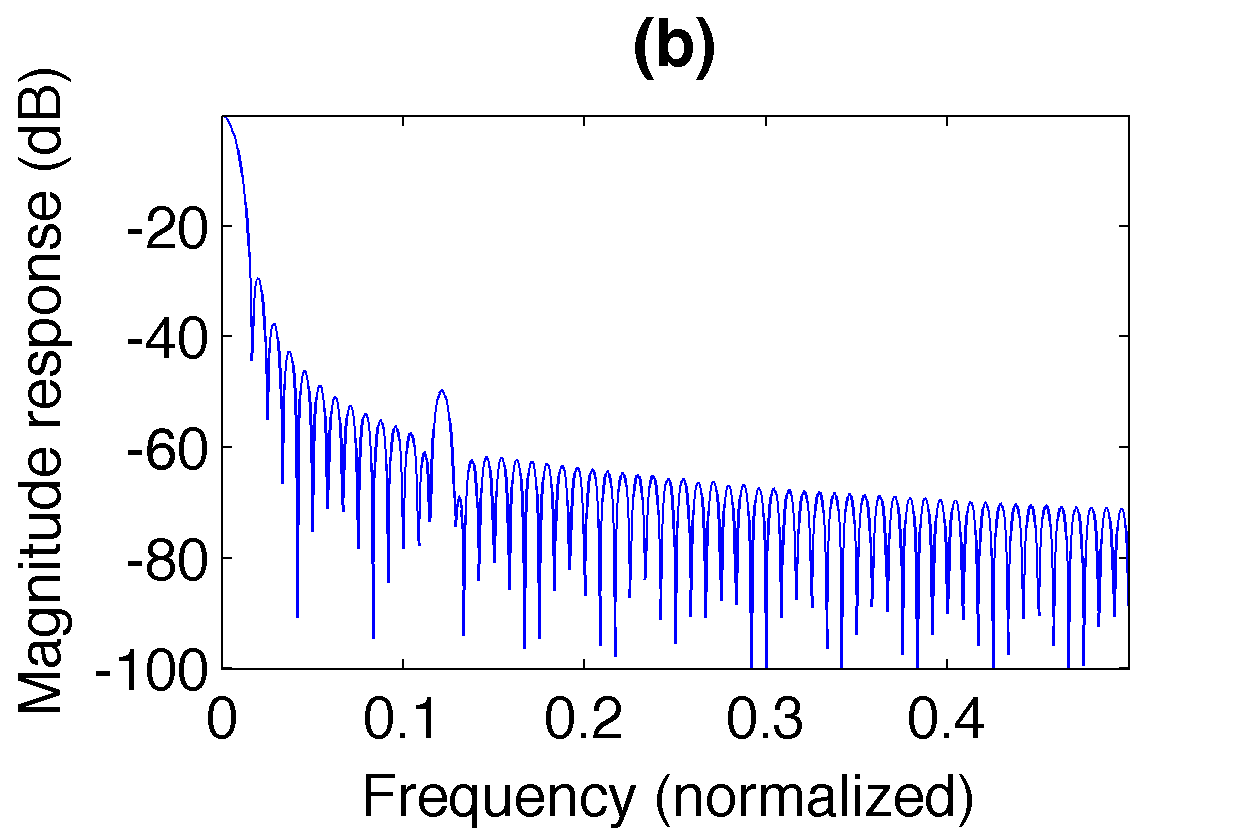}\\
 \includegraphics[height=0.136\textwidth,width=0.23\textwidth]{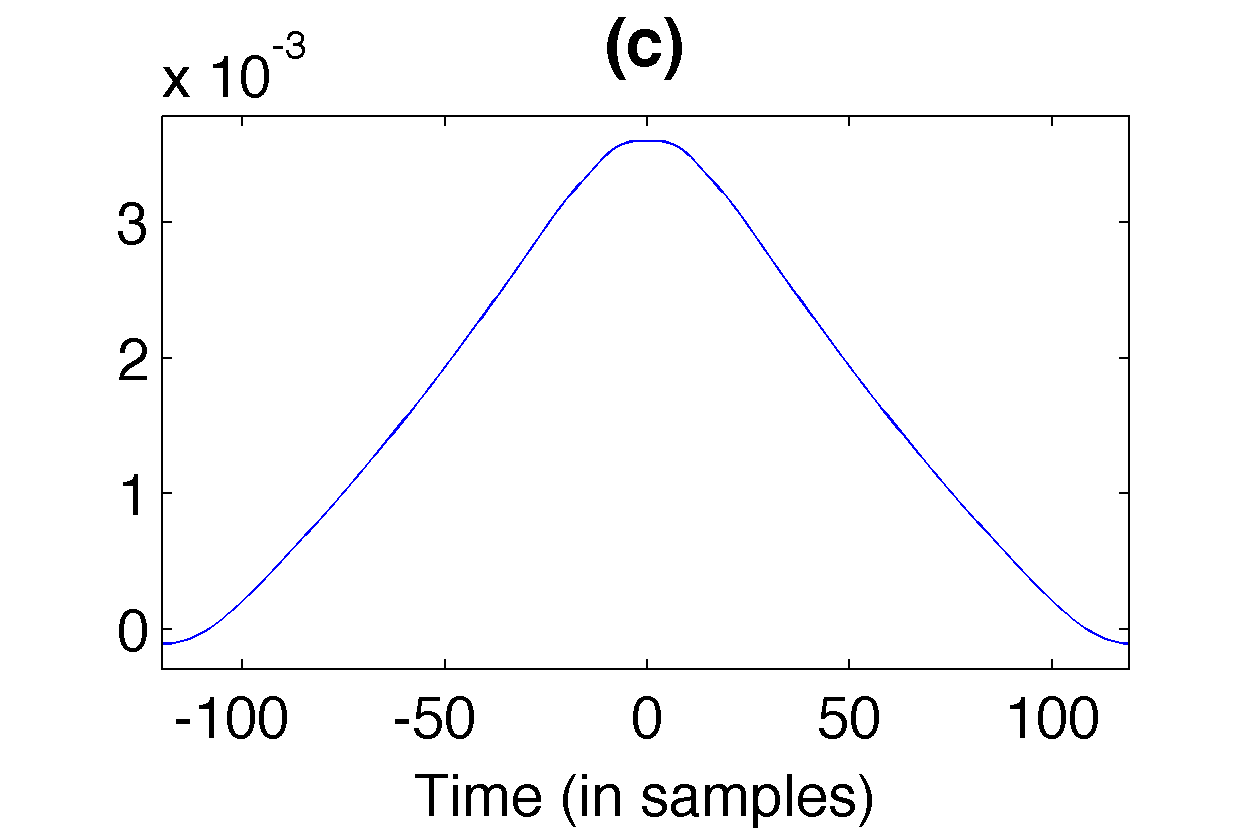}
 \includegraphics[height=0.136\textwidth,width=0.23\textwidth]{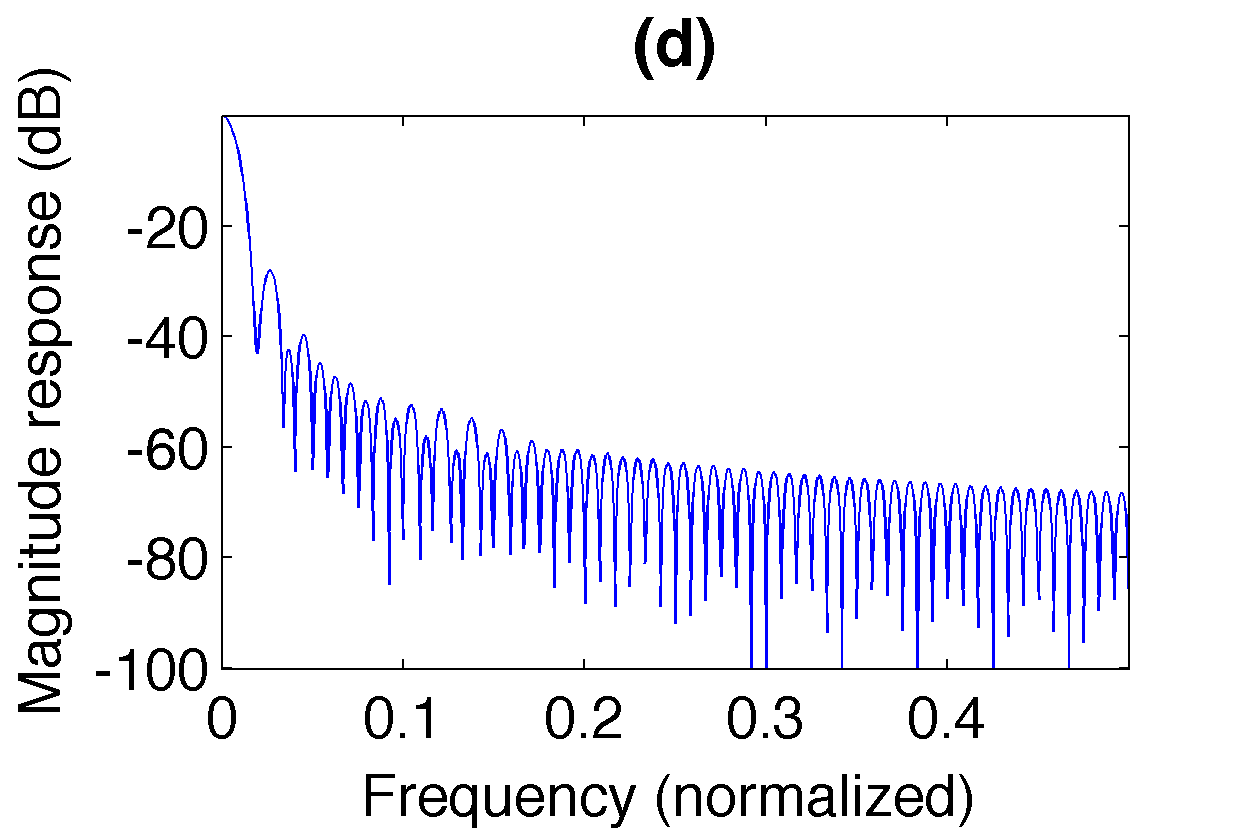}\\
 \includegraphics[height=0.136\textwidth,width=0.23\textwidth]{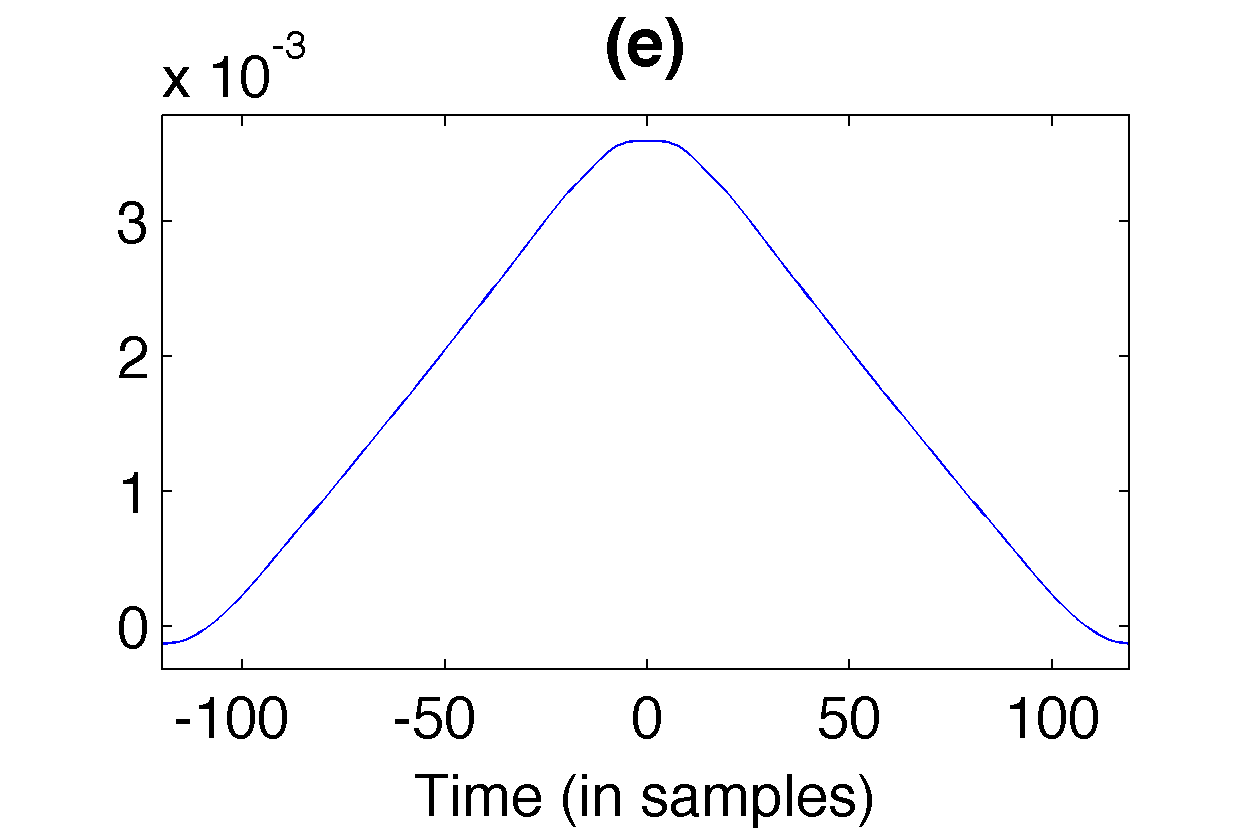}
 \includegraphics[height=0.136\textwidth,width=0.23\textwidth]{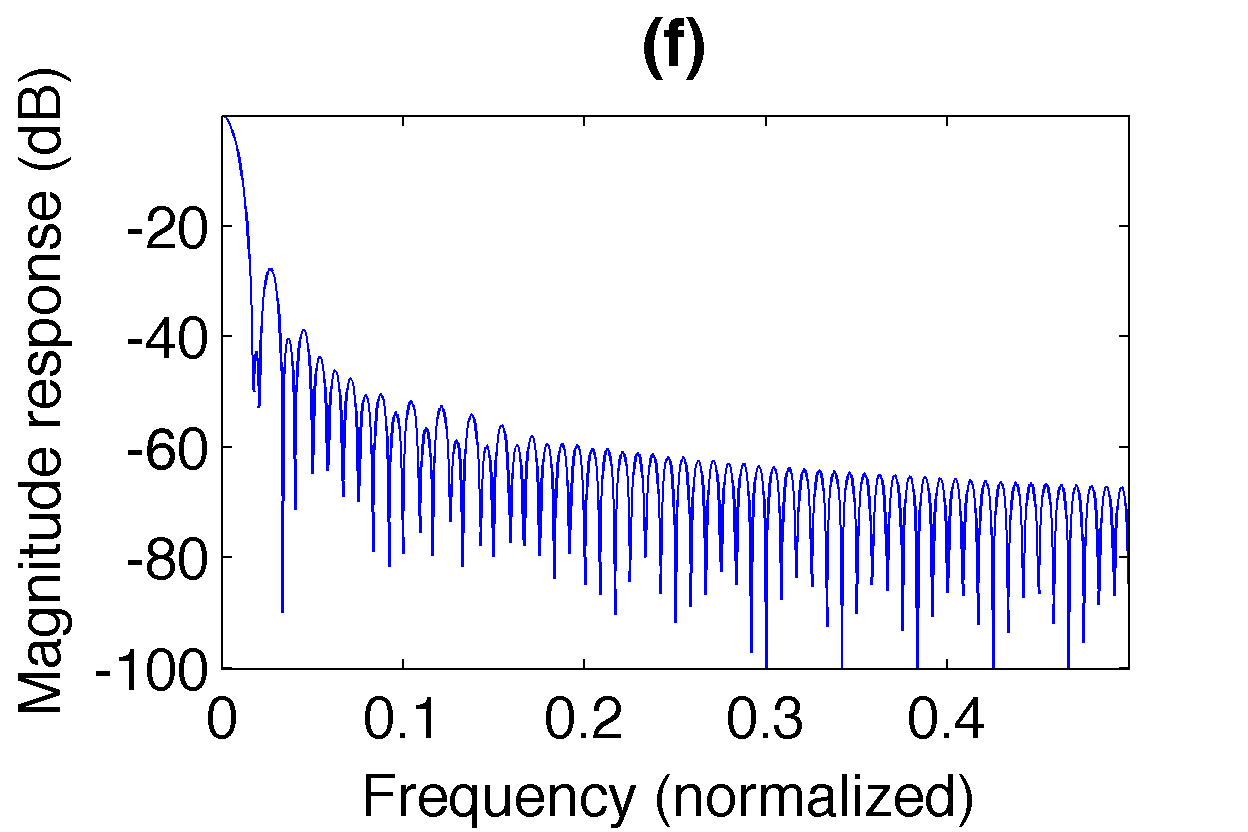} 
\end{center}
\caption{\label{fig:demo_variance2} 'Frequency-optimized' dual windows and their magnitude frequency response (in dB). Priors: (a)(b) $ \text{var}(|\mathcal{F}x|) $. 
(c)(d) $ \text{var}(|\mathcal{F}x|^2)$. (e)(f) $\|\nabla x\|_2^2$}
\end{figure}

\emph{b) Concentration in time and frequency:}
For simultaneous concentration in time and frequency, we can consider jointly the time- and frequency-domain variants of the priors discussed above. Alternatively, we use a single cost functions providing concentration in both domains at once. In TF literature, modulation space norms, i.e. $\ell^p$-norms on the short-time Fourier coefficients are frequently used to measure joint TF localization, see e.g.~\cite{gr01,fe81-2}. In particular $\|x\|_{S_0}=\| G_{g,1,L} x\|_1$, where $g$ is a Gaussian function, is considered as quality measure for window functions. 
They are two different ways to motivate this claim. First, the $S_0$-norm is an inverse measure of concentration. It is limited by an uncertainty principle demonstrated by Lieb in~\cite{lieb2002integral} and generalized to the discrete setting in~\cite[Proposition 2]{feichtinger2012method} and in~\cite[Theorem 3]{perraudin2016global}. Hence, minimizing the $S_0$-norm tends to reduce uncertainty and improve the overall concentration. 
Second, the $S_0$-norm prior is nothing but an $\ell^1$-norm prior on the time-frequency representation of $x$. Hence, similar to the classical $\ell^1$-prior, it is expected to promote functions with few large values and a significant magnitude drop-off outside of these values. However, we know that time-domain concentration implies frequency-domain smoothness and vice-versa. Therefore, we can assume that optimizing the $S_0$-norm provides a function with good joint time-frequency concentration and smoothness. Again, similar to the $\ell^1$-case, optimizing $S_0$-norm alone does not not guarantee concentration around the origin (or any single TF location). However, non-negligible values around the origin are required for duality and thus, the combination of the duality constraint and $S_0$-norm optimization provides dual windows with excellent localization around zero in both domains, see Figure~\ref{fig:prior_s0}(a)(b).

Compared to the previously discussed priors, $S_0$-norm optimization is considerably more expensive. Since we are not aware of an explicit solution to the $S_0$ proximity operator, we propose its computation via an iteration based on ADMM~\cite{boyd2011distributed}. The number of required ADMM steps per PPXA (parallel proximal algorithm) iteration is low and scales well with $L$ (usually 3-4 steps provided sufficient precision), but each substep requires the computation of one full STFT and one inverse STFT, with a complexity of $\mathcal{O}(L^2 \log(L))$ each. 

In some cases, concentration can be further increased and a desired trade-off between time- and frequency-concentration can be established by a weighted $S_0$-norm prior. The proximity operator is realized similar to the unweighted case. Figure~\ref{fig:prior_s0}(c)(d) shows an example using the circular weight
\[
W[m,n]=\ln \left( 1 + w^2[n]+w^2[m] \right),
\]
using the weight $w$ as defined above.
While other weights are clearly feasible, the weight above has been tuned to yield good results in our experiments and is also used for Exp. 1.

\begin{figure}[ht!]
\begin{center}
\includegraphics[height=0.136\textwidth,width=0.23\textwidth]{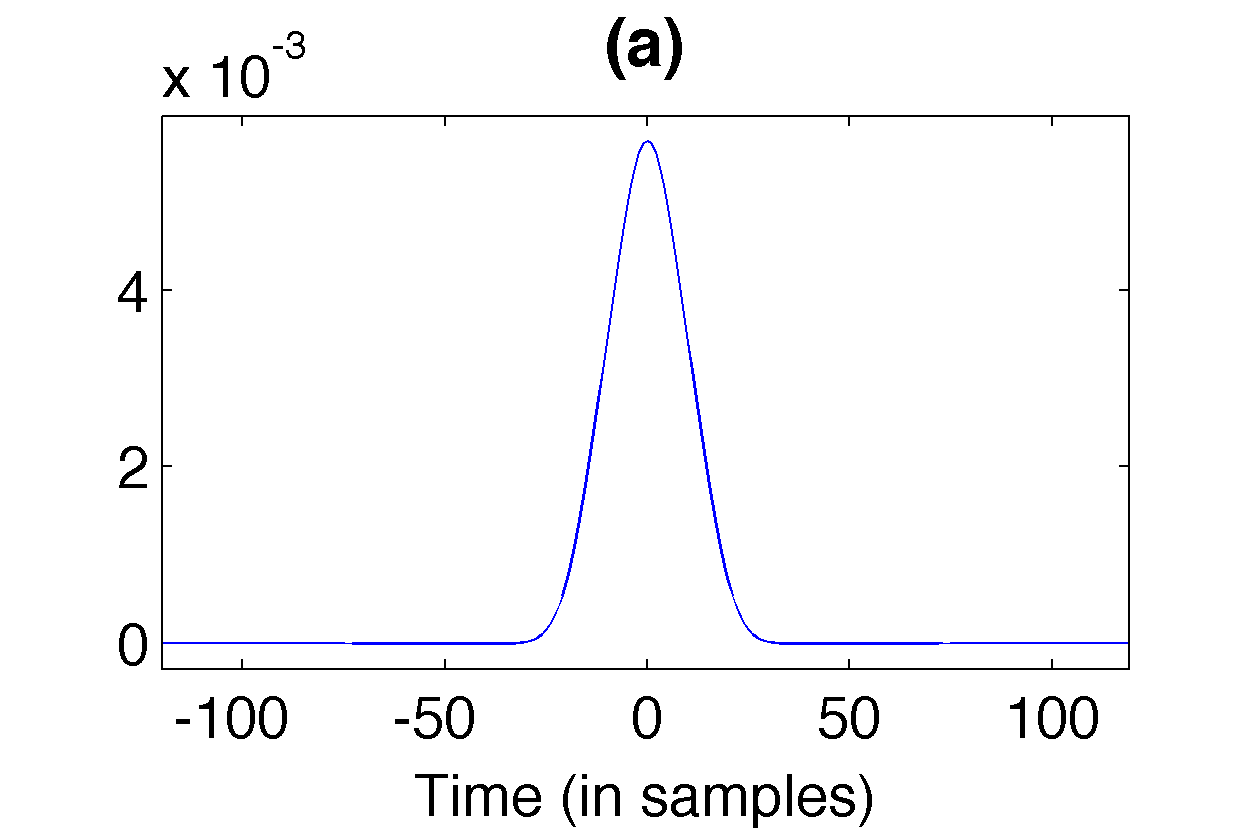}
\includegraphics[height=0.136\textwidth,width=0.23\textwidth]{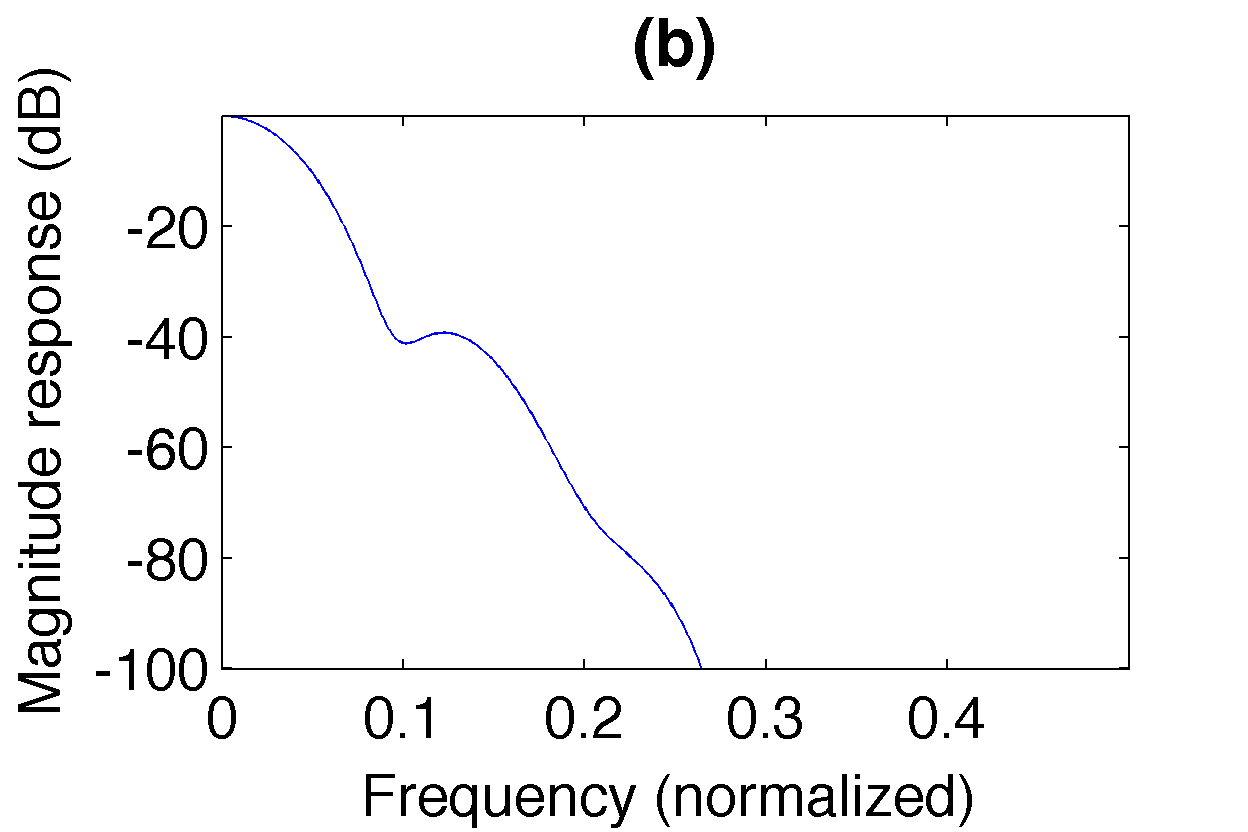}\\
\includegraphics[height=0.136\textwidth,width=0.23\textwidth]{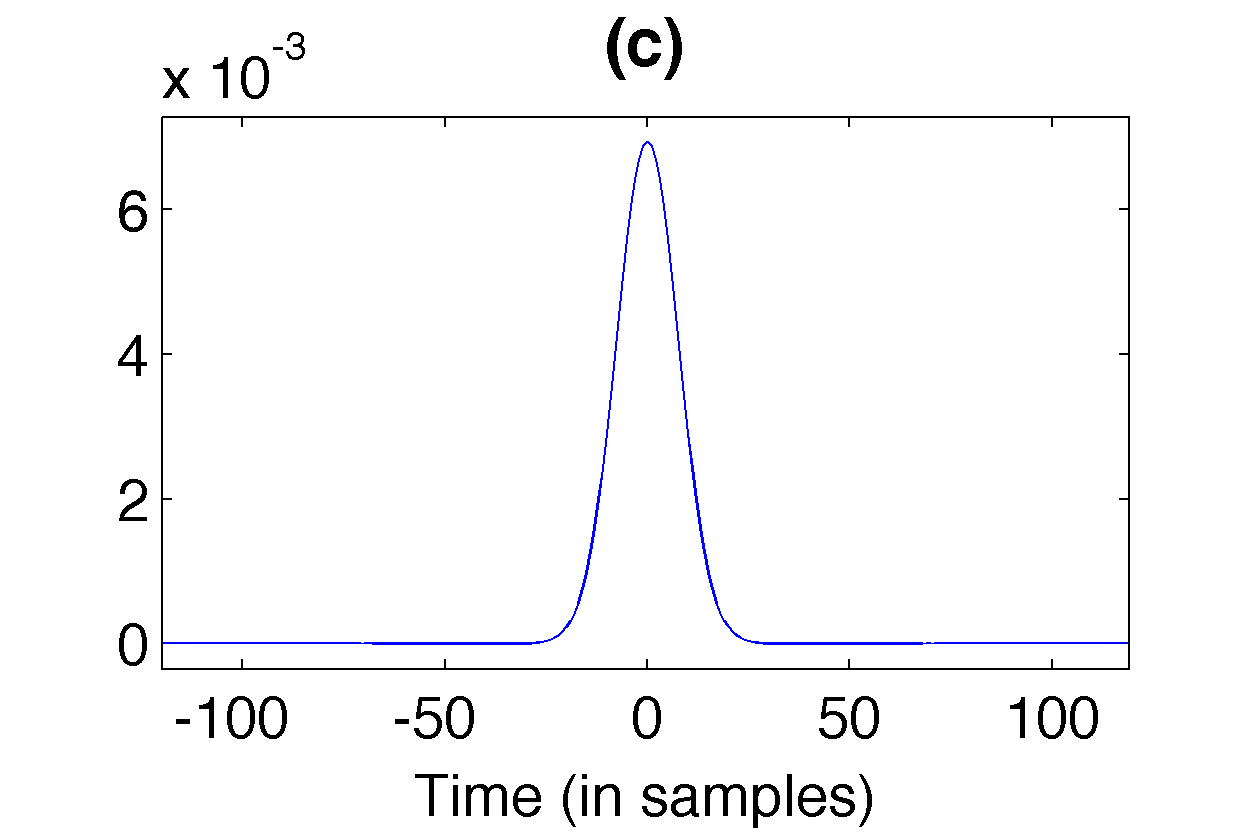}
\includegraphics[height=0.136\textwidth,width=0.23\textwidth]{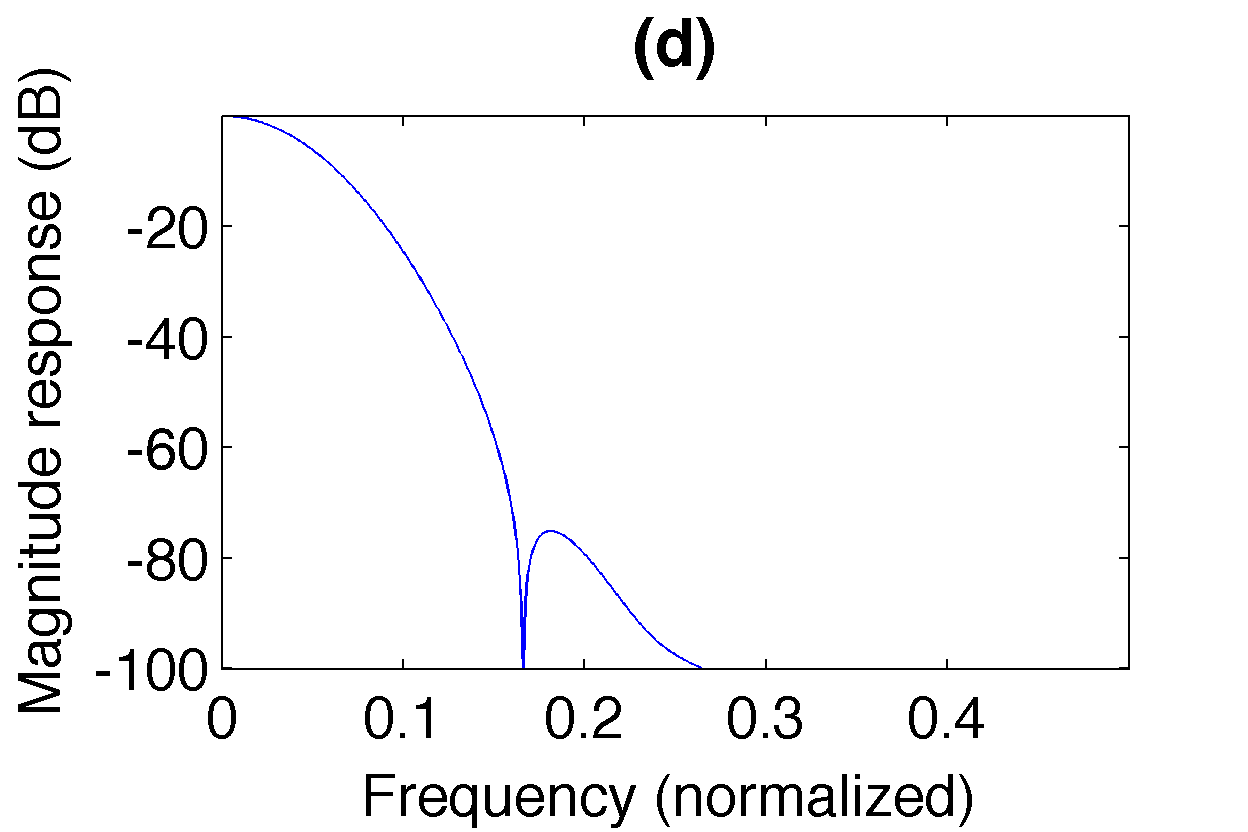}
\end{center}
\caption{Result of optimization function $\|x\|_{S_0}=\| G_{g,1,L} x\|_1$ (a)(b) and its weighted version (c)(d)}\label{fig:prior_s0}
\end{figure}

\emph{c) Other cost functions:}
The list of possible cost functions is vast and full exploration of the possibilities of convex optimization in window design is far beyond the scope of any single contribution. As a rather academic example, we propose a free design approach that selects the dual Gabor window closest to the linear span of a model window $g_{sh}$, i.e. we find 
\[
 \mathop{\operatorname{arg~min}}\limits _{x \in \mathcal{C}_{\text{dual}}}\|x - P_{\langle g_{sh}\rangle}x\|_2^2,
\]
where $\langle g_{sh}\rangle$ is the linear span of $g_{sh}$. The solution is computed by a POCS (projection onto convex set)~\cite{combettes1993foundations} algorithm. Due to the examples academic nature, we were not concerned with convergence time. Examples using a sine wave and a dirac pulse as model window are presented in Figure~\ref{fig:other_priors}(a)(b) and (c)(d)(d').

\begin{figure}[!ht]
\begin{centering}
\includegraphics[width=0.23\textwidth]{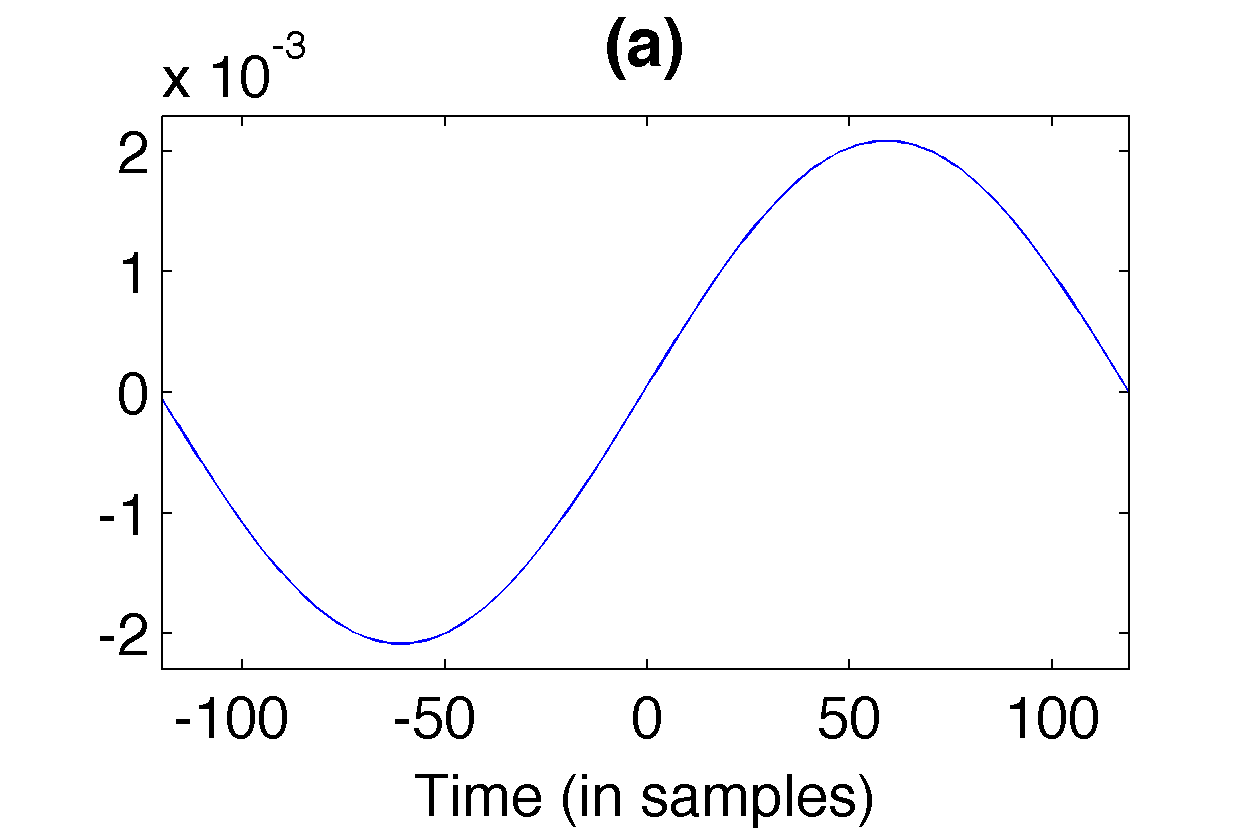}
\includegraphics[width=0.23\textwidth]{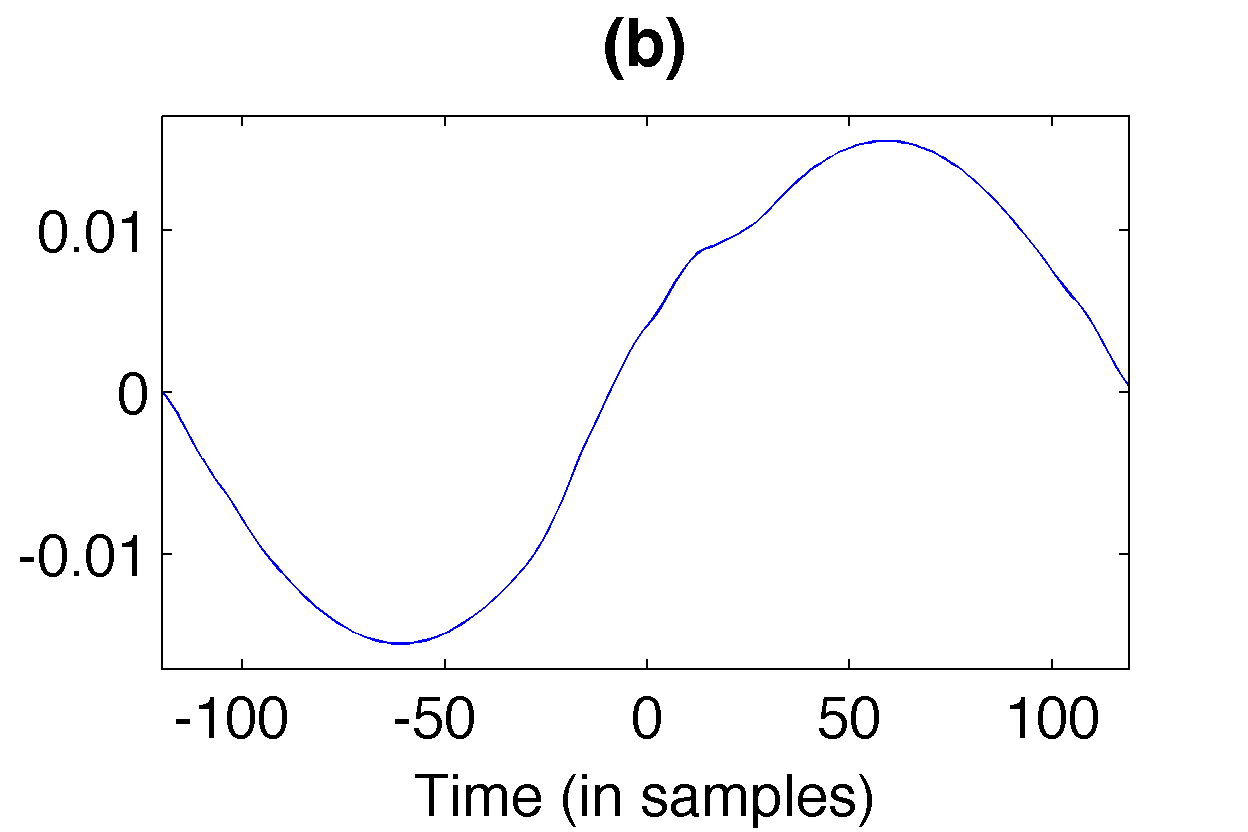}\\
\includegraphics[width=0.23\textwidth]{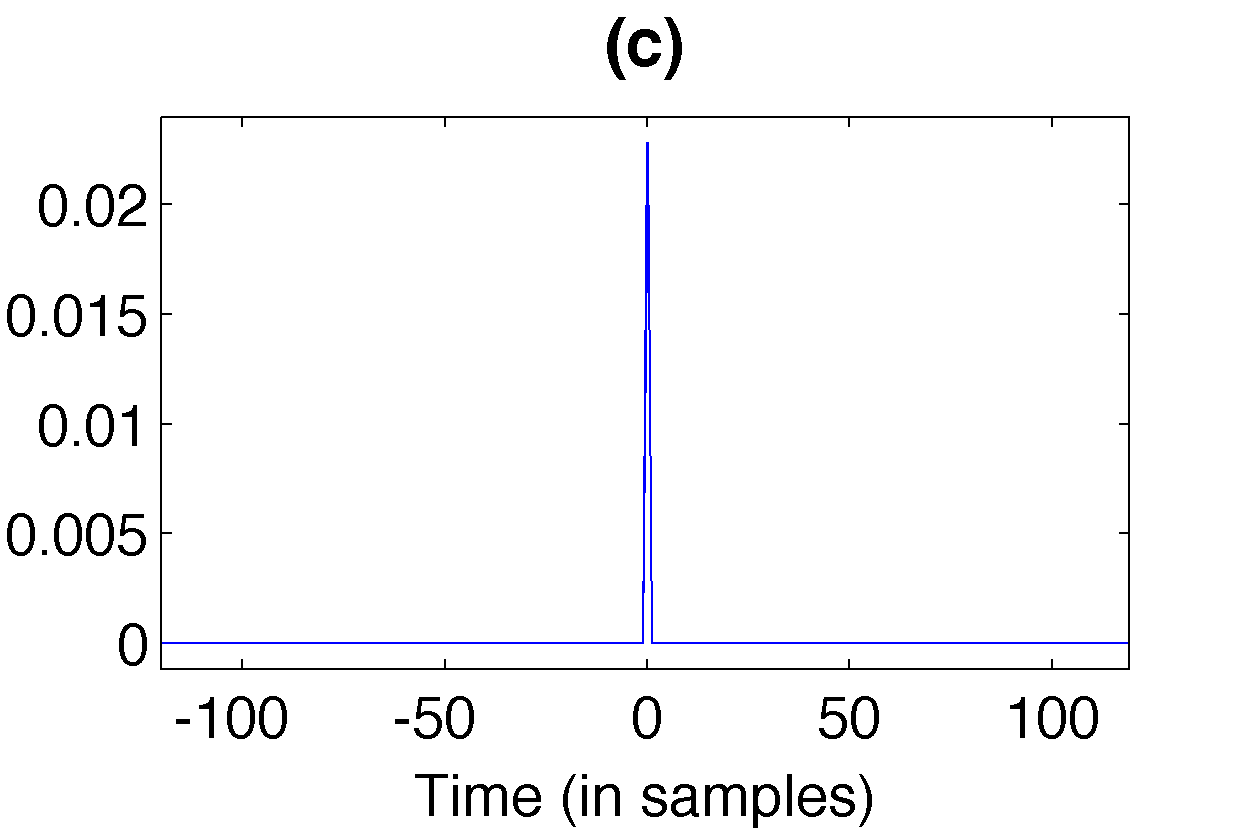}
\includegraphics[width=0.23\textwidth]{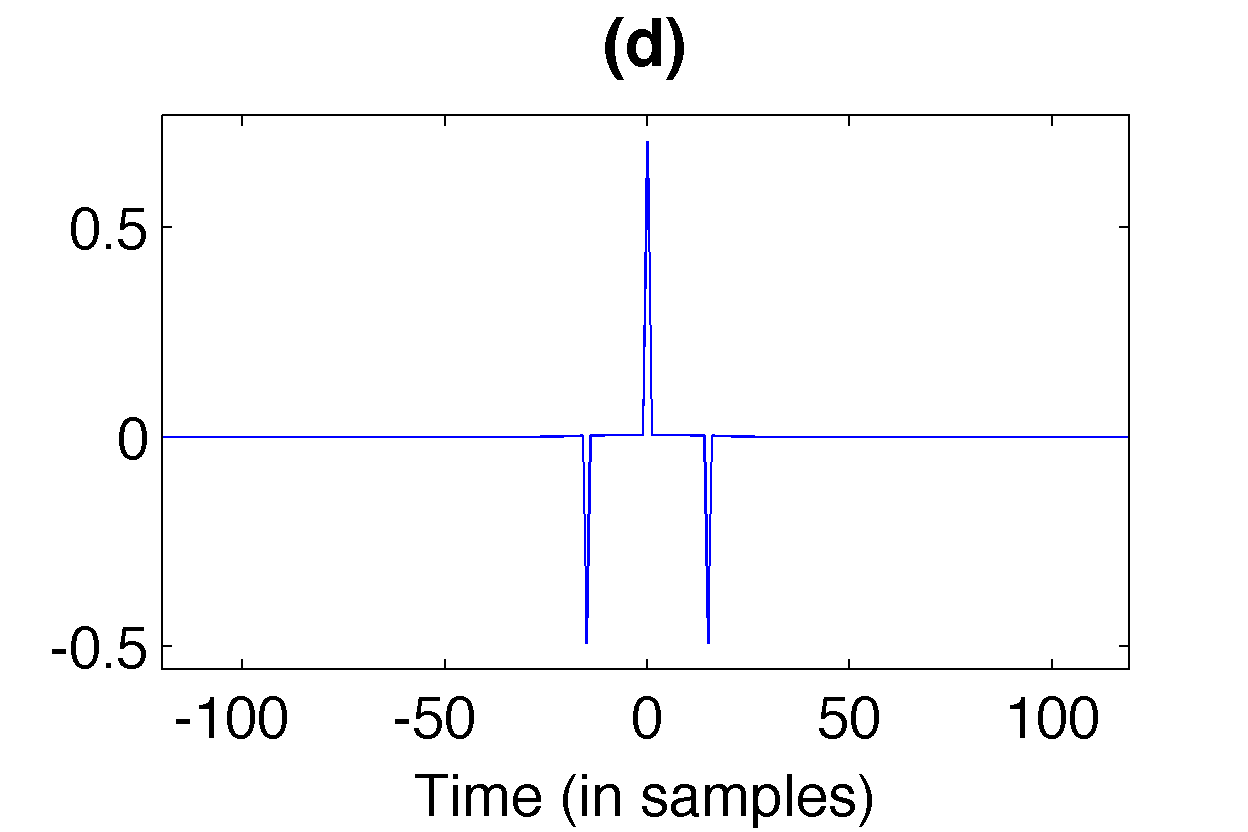} \\
\includegraphics[width=0.23\textwidth]{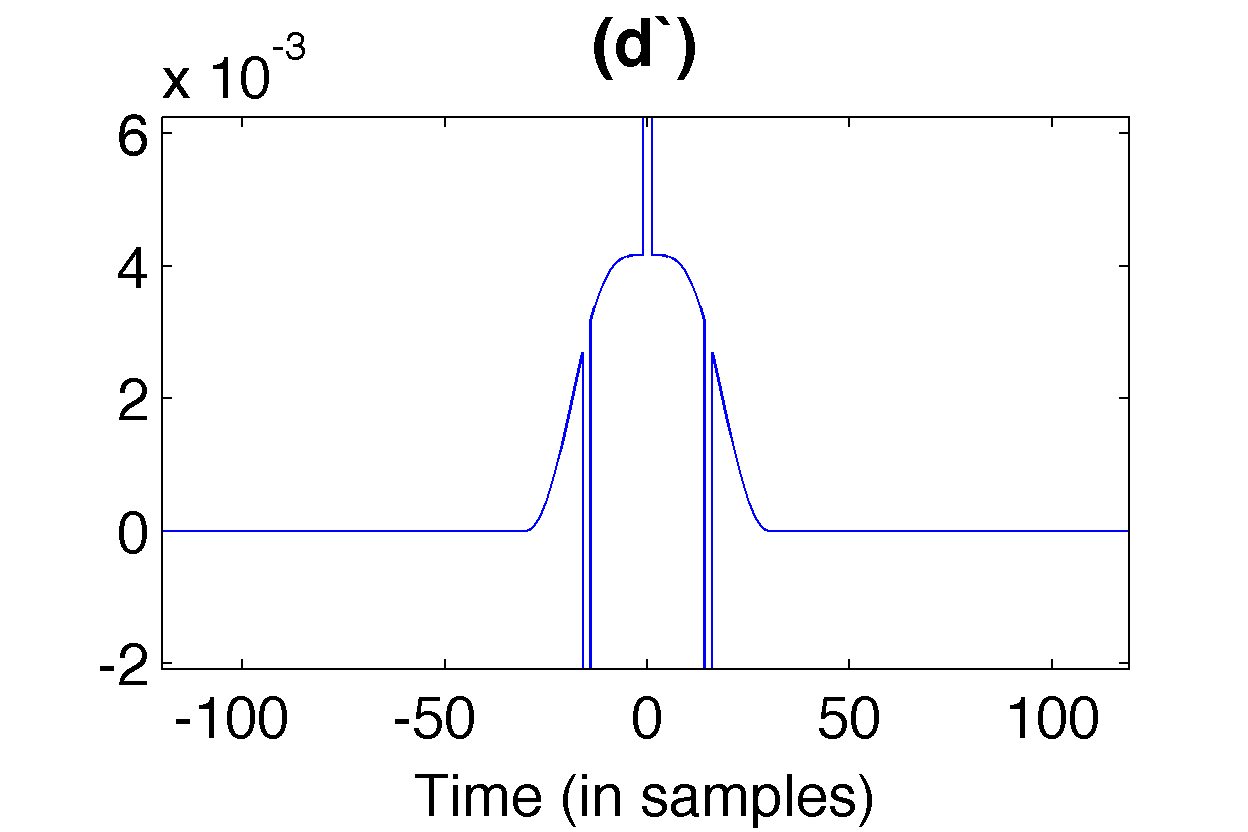} 

\par\end{centering}

\caption{Result of optimization function $\|x - P_{\langle g_{sh}\rangle}x\|_2^2$. $g_{sh}$ (left) and the result of optimization (right):
(a)(b) A sine. (c)(d) An impulse (Dirac). Note that the solution window (d) is actually composed of a smooth bump function in addition to the 3 clearly visible impulses. We provide a zoom-in in (d'). .}\label{fig:other_priors}
\end{figure}

\begin{table}[ht]
\begin{center}
{\small
\hfill{}
\caption{Summary of important priors}
\label{tab:regs} 
\begin{tabular}{|l|p{1.7in}|r|}
\hline 
 Function  &  Effect on the signal & Complexity \\
\hline 
$\|x\|_{1}$  &  sparse representation in time  & $L$ \\
$\|\mathcal{F}x\|_{1}$  & sparse representation in frequency & $L \log(L)$ \\
$\|\nabla x\|_{2}^{2}$  & smoothen in time / concentrate in frequency &  $L \log(L)$\\
$\|\nabla\mathcal{F}x\|_{2}^{2}$ &  smoothen in frequency / concentrate in time &  $L$\\
$\|x\|_{S0}$  &  Concentrate in both time and frequency & $L^2 \log(L)$\\
$\|x\|_{2}^{2}$  &  spread values more evenly/ toward the canonical dual &  $L$ \\
$\text{var}(|x|)$  &  Concentrate the signal in time & $L $ \\
$\text{var}(x^2)$  &  Concentrate the signal in time & $L$\\
$\text{var}\left(\mathcal{F}x\right)$  & Concentrate the signal in frequency &  $L \log(L)$\\
$\text{var}\left((\mathcal{F}x)^2\right)$  &  Concentrate the signal in frequency & $L \log(L)$ \\
$i_{\mathcal{C}}(x)$  &  force $x\in\mathcal{C}$ &  $L^2$ [$L^3$] \\ \hline 
\end{tabular}
}
\hfill{}
\end{center}
\end{table}

\emph{A note on implementation and complexity:} Various methods exist for solving the optimization problems formulated throughout the paper, e.g. generalized forward backward~\cite{raguet2011generalized} or SDMM \cite{setzer2010deblurring}~\cite{combettes2011proximal}, some of which might prove more efficient than PPXA \cite{combettes2007douglas} which we employ. However, optimizing computational complexity is, for several reasons, not at the center of this contribution. First, in contrast to continuously varying the analysis window and transform parameters, a single (or few different), predetermined transform configuration is usually used repeatedly, allowing for off-line computation of the dual window in advance. Second, our main concern is the construction of pairs of dual windows such that duality is satisfied independent of the signal length $L_s$ by imposing support constraints, see Section~\ref{ssec:gabframes}. Therefore, the complexity depends mainly on the support 
size of the dual 
windows and not of 
the signal size $L_s$, see Lemma~\ref{sec:dualfin1}. Third, most of the applications use windows of size smaller than $10^4$ samples, where the implementation used provides quick convergence, except for the $S_0$-norm prior, which might require several minutes of computation time.

The complexity of the overall algorithm depends mostly on the computation of the proximal operator of the selected priors. Indeed, the most expensive prior operator is often the bottleneck of the optimization algorithm (whereas the choice of the algorithm itself rather influences the total number of iterations). For very large $L$, the algorithm will be limited by the projection onto the dual set satisfying the WR systems of equations  $Gx = \delta$. The solution of the projection
\[
\mathop{\operatorname{arg~min}}\limits_{x}\|x-x_{0}\|_{2}^{2}\hspace{1em}\mbox{s. t. }Gx=\delta,
\]
is given by
\[
x=x_{0}-G^{*}\left(G G^{*}\right)^{+}\left(G x_{0}- \delta\right).
\]
The projection itself has quadratic complexity, but the pseudoinverse of the matrix $G G^{*}$ has to be computed before the start of the iteration process. This particular computation scales with $L^3$. Due the restricted use of very long windows, this is no significant limitation. The complexity of the other proximal operators is provided in Table~\ref{tab:regs}. 
Among them, we do observe that the optimization of the (weighted) $S_0$-norm is much more expensive. As a results, it might be preferable to optimize separable time-frequency measures instead, leading to a complexity of $O(L\log(L))$ instead of $O(L^2\log(L))$. 


\section{Numerical experiments}\label{sec:results}
In the following sections, we present several experiments regarding dual Gabor windows optimizing joint TF concentration. Such windows are well-suited both as analysis and synthesis windows, reducing cross-component interference in the Gabor coefficients or increasing the precision of processing operations, respectively. It is widely known that windows of Gaussian type $g(t) = e^{-ct^2}$ optimize a wide variety of
concentration measures, such as the Heisenberg uncertainty (product of variances), $\|g\|_{S_0}/\|g\|_2$ and many more~\cite{rito13}. The same can be said about its discretized counterpart. However, Gaussian windows are not compactly supported in either domain and any attempt to make them so has a detrimental effect on their TF concentration. Compact support in either domain generally introduces infinite support and oscillation in the other, but low amplitude values (in comparison to the desired processing precision) can often be considered irrelevant. The concept of a \emph{good} window is subjective, depending not only on the application, but on the user as well. 

We now present $3$ experiments where we compute various \emph{optimal} dual windows under different assumptions and restrictions. Exp. 1 provides a comparison of the effect of the concentration measures discussed in Sec.~\ref{sec:prox}, when applied jointly in time and frequency. Exp. 2 demonstrates, by tuning the optimization parameters, that the set of dual windows allows surprising freedom when choosing the trade-off between time and frequency concentration. In those two first experiments we impose only a weak support constraint (i.e the support of the dual is significantly longer than the support of the analysis window). In contrast to the previous experiments, Exp. 3 considers a situation beyond the painless case, i.e. the canonical dual window has long (possibly infinite) support. We construct a smooth dual window $h$ supported $I_g$, the support set of $g$ and compare our result to that provided by the truncation method.

\textbf{Experiment 1 - Optimizing TF concentration:} 
In this experiment, we simultaneously optimize TF concentration with regards to the previously introduced measures. This is either achieved by a single prior on the TF representation of the dual window $h$ ($\|x\|_{S_0}$, $\|x\|_{S_0,w}$), or by applying the time and frequency versions of one prior, with equal weights ($\|\nabla x\|_2^2$, $\text{var}(|x|)$, $\text{var}(|x|^2)$), i.e.
\[
\mathop{\operatorname{arg~min}}\limits_{x \in \mathcal{C}_{\text{dual}}\cap \mathcal{C}_{\text{supp}}} f(x) + f(\mathcal{F}x).
\]
The time step $\gamma$ of the algorithm has been tuned experimentally for each prior, to yield good convergence speed and precision. For this experiment, we have chosen a Tukey window with a transition area ratio\footnote{The Tukey window with transition area ratio $r\in[0,1]$ is given by $g(t) = 1$ for $t\in[-(1-r)/2,(1-r)/2]$, $g(t) = 0.5+0.5\cos(\pi (2t-r+1)/2r)$ for $t\in[-1/2,-(1-r)/2]$, $g(t) = 0.5+0.5\cos(\pi (2t+r-1)/2r)$ for $t\in[(1-r)/2,1/2]$ and zero else.} of $3/5$ (see Figure~\ref{fig:Experiment1-ana}), with $L_g = 240$, $a = 50$ and $M=300$. The support of the dual window candidates was restricted to $L_h = 600$. 

\begin{figure}[!thp]
\begin{centering}
\includegraphics[width=0.24\textwidth]{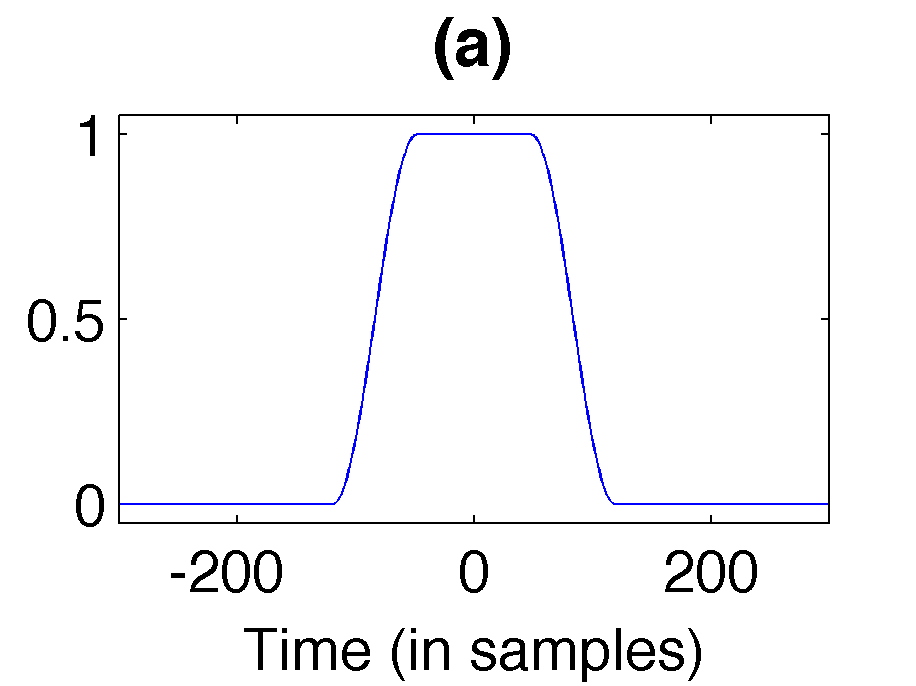}
\includegraphics[width=0.24\textwidth]{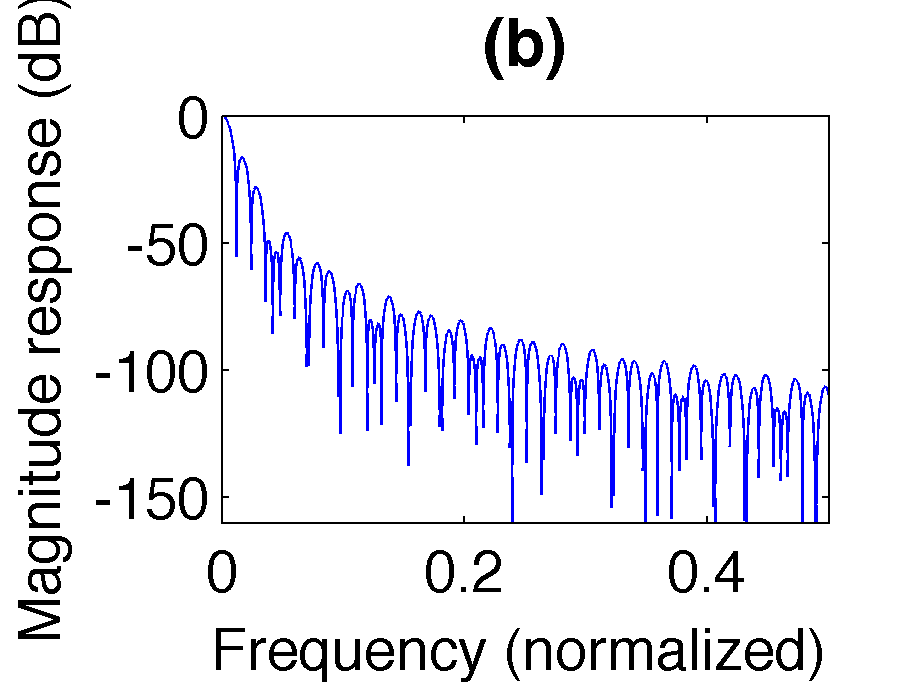}
\par\end{centering}
\caption{The analysis Tukey window and its magnitude frequency response (in dB).}\label{fig:Experiment1-ana}
\end{figure}

Figure~\ref{fig:Experiment1-duals} shows time and frequency representations, as well as the ambiguity function\footnote{In $\CC^L$, the ambiguity function is the STFT of a function with regards to itself: $G_{f,1,L}f$.} of the results. We see  Figure~\ref{fig:Experiment1-duals}(a)(b)(c) that all the criteria provide (visually) nicely concentrated dual Gabor windows, improved over the canonical dual.
In particular, we see that the gradient and energy variance optimal windows are very similar around the origin, whereas the latter shows worse decay. The variance induces the best concentration of large values in an almost rectangular TF area, but similar to the energy variance, no good decay is achieved. Both $S_0$ and weighted $S_0$ priors perform very well, but the weight induces a more symmetric TF concentration and slightly better decay.

Note that, while the shape of this experiment's results is quite characteristic, their quality cannot be representative for any arbitrary setup. In fact the results are highly dependent on the quality of the original Gabor frame, its redundancy and the choice of support constraint. As expected, the optimization effect is smaller, the better the starting setup is. For additional experiments with other starting windows and/or without support constraint, we refer to the webpage.

To further emphasize the similarities and differences between the window functions and to assess their quality, we computed for all the results every concentration measure discussed in Sec.~\ref{sec:prox}, see Table~\ref{tab:ex1-crit1}. Besides demonstrating nicely that the various solutions actually provide the best result in their respective criteria, the table underlines the similarity of gradient and energy variance optimization. In total, except for the canonical dual and $S_0$-norm solution, all results are reasonably close. The different shape of the $S_0$-norm solution is easily explained by the fact that it is the only measure that does not penalize high-energy contributions away from the origin. Furthermore, in Table~\ref{tab:ex1-crit2}, the following classical measures for window quality are presented: $-3$~dB width (time), main lobe width (frequency), side lobe attenuation (frequency, in dB) and side lobe decay (in dB). Please note that the lack of an underlying continuous function for the 
dual windows prevents us from determining the side lobe decay rate from the degree of smoothness of the window. Therefore, as a rough approximation, we compute the ratio between the largest sidelobe and the largest of the final $3$ side lobes below the Nyquist frequency instead. Notably, the variance optimization concentrates the main energy contribution in the smallest TF area as indicated in the third column, while the gradient optimization provides arguably the most balanced solution and the best decay properties, followed by the considerably more expensive $S_0$ and weighted $S_0$ solutions.

\begin{figure}[!thp]
\begin{centering}
\includegraphics[width=0.15\textwidth]{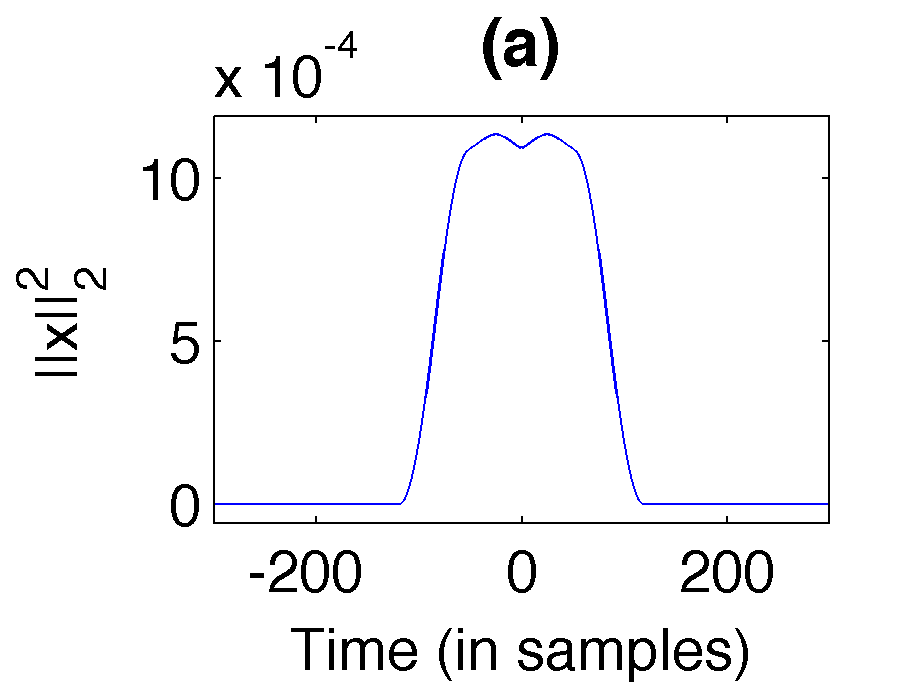}
\includegraphics[width=0.15\textwidth]{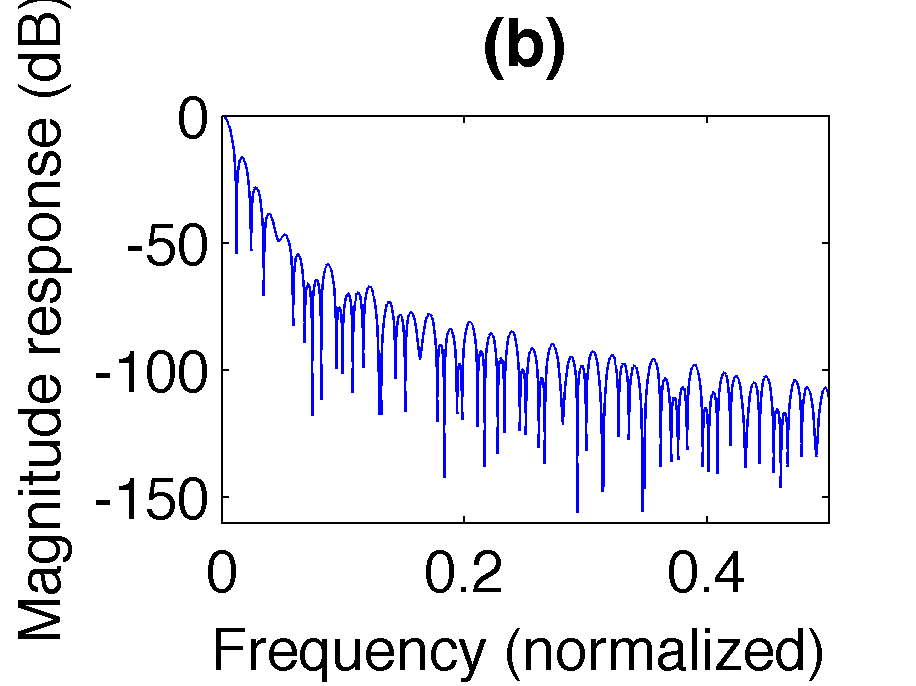}
\includegraphics[width=0.15\textwidth]{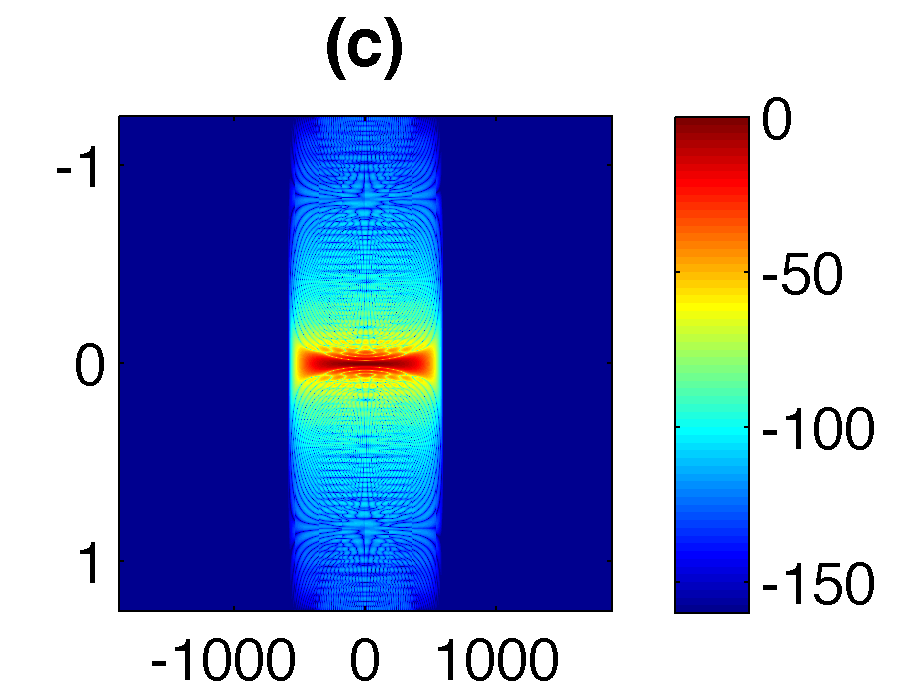}\\

\includegraphics[width=0.15\textwidth]{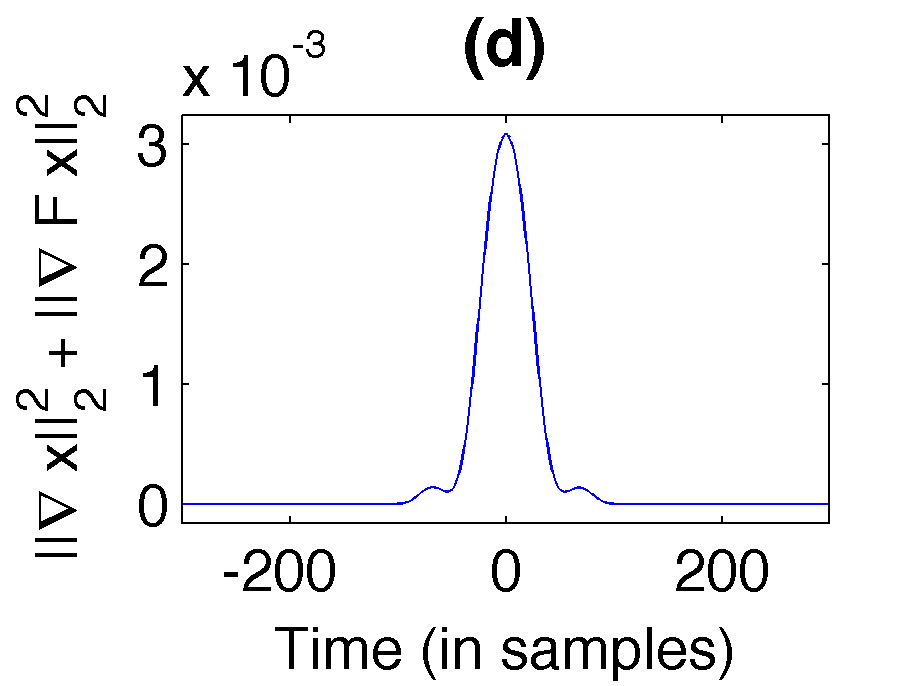}
\includegraphics[width=0.15\textwidth]{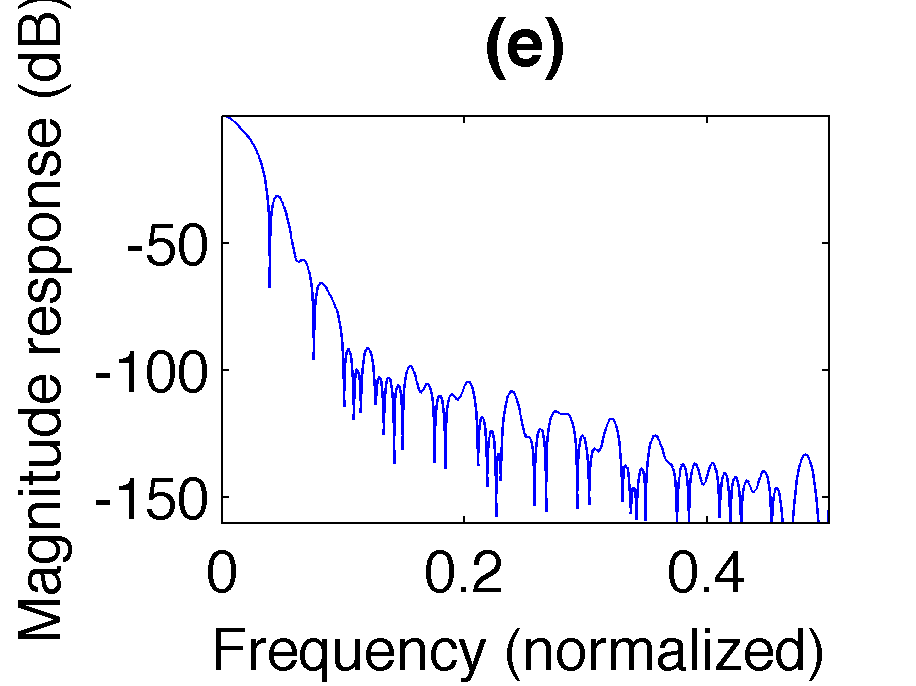}
\includegraphics[width=0.15\textwidth]{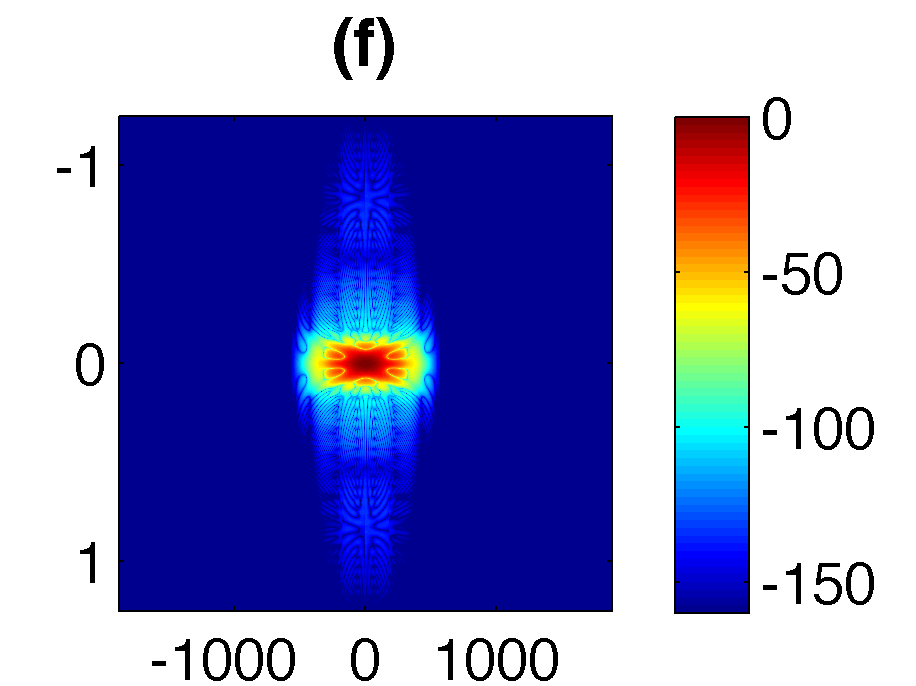}\\

\includegraphics[width=0.15\textwidth]{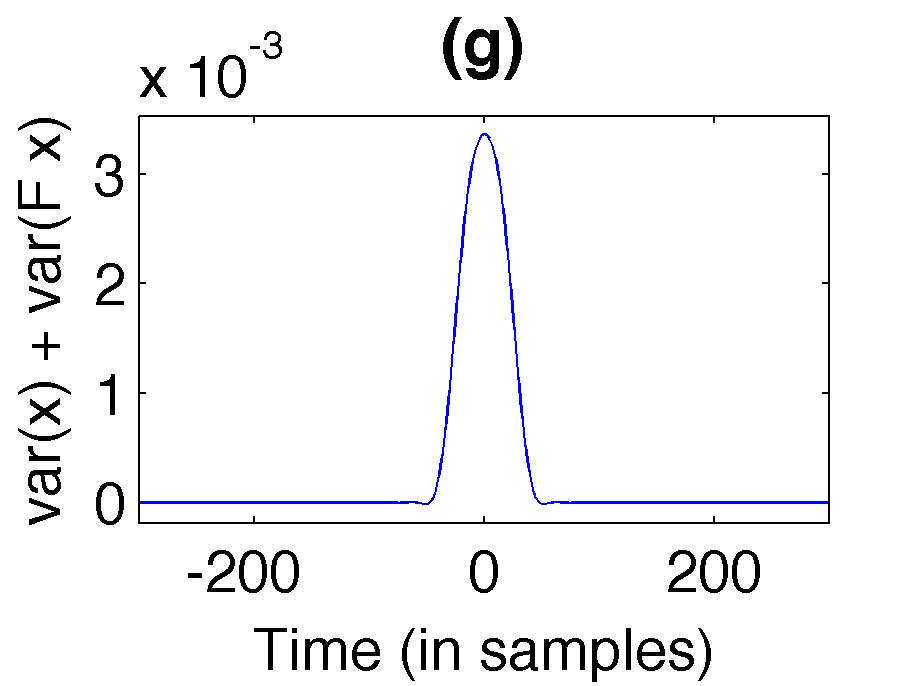}
\includegraphics[width=0.15\textwidth]{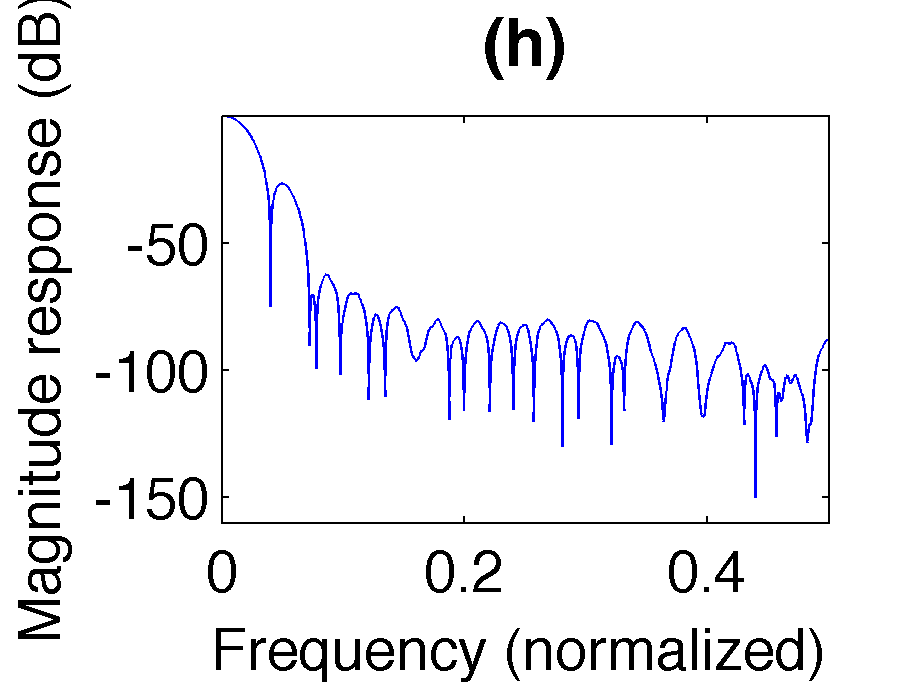}
\includegraphics[width=0.15\textwidth]{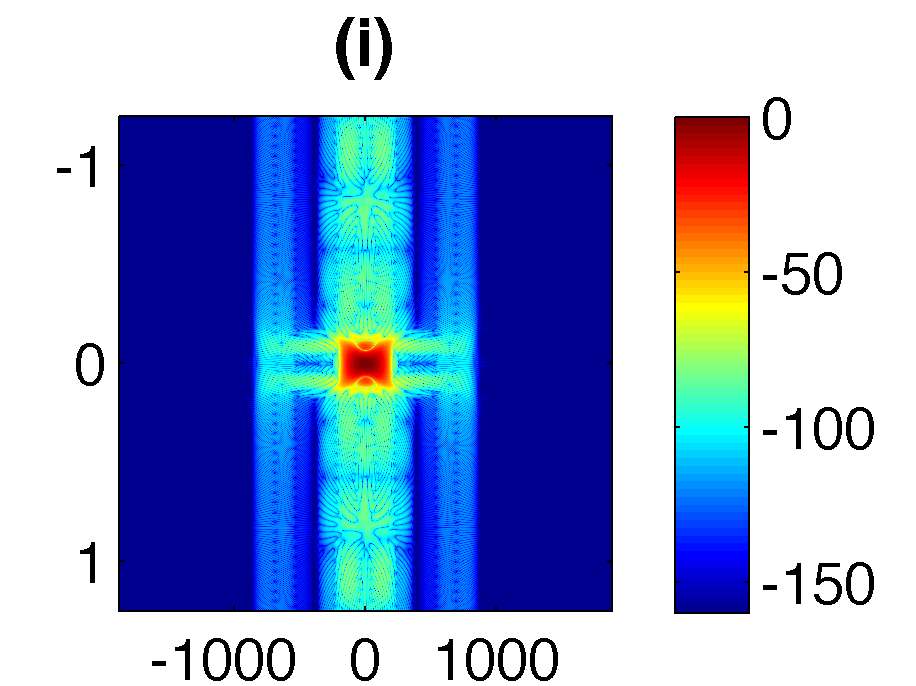}\\

\includegraphics[width=0.15\textwidth]{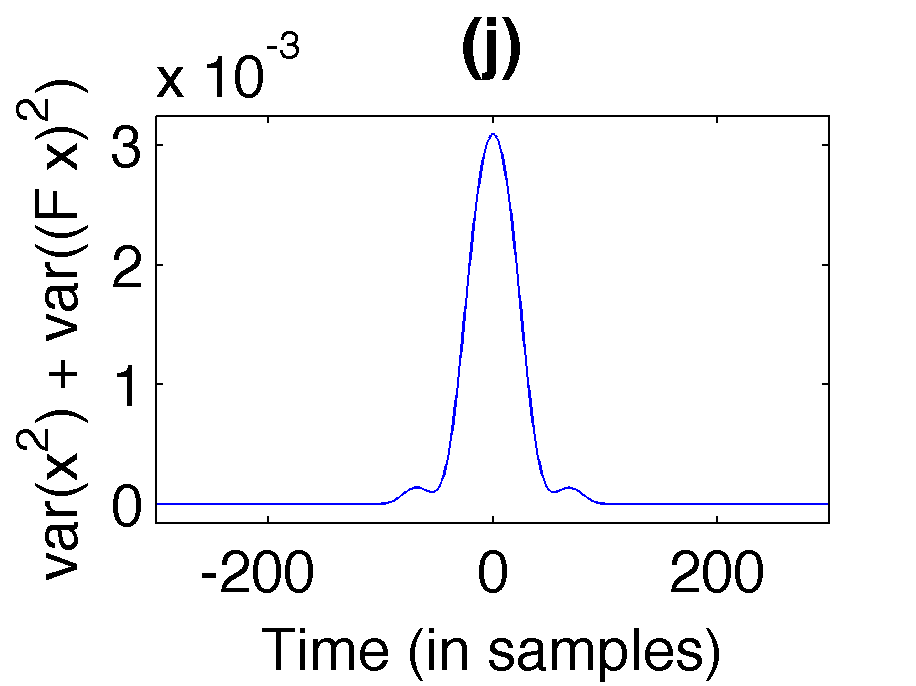}
\includegraphics[width=0.15\textwidth]{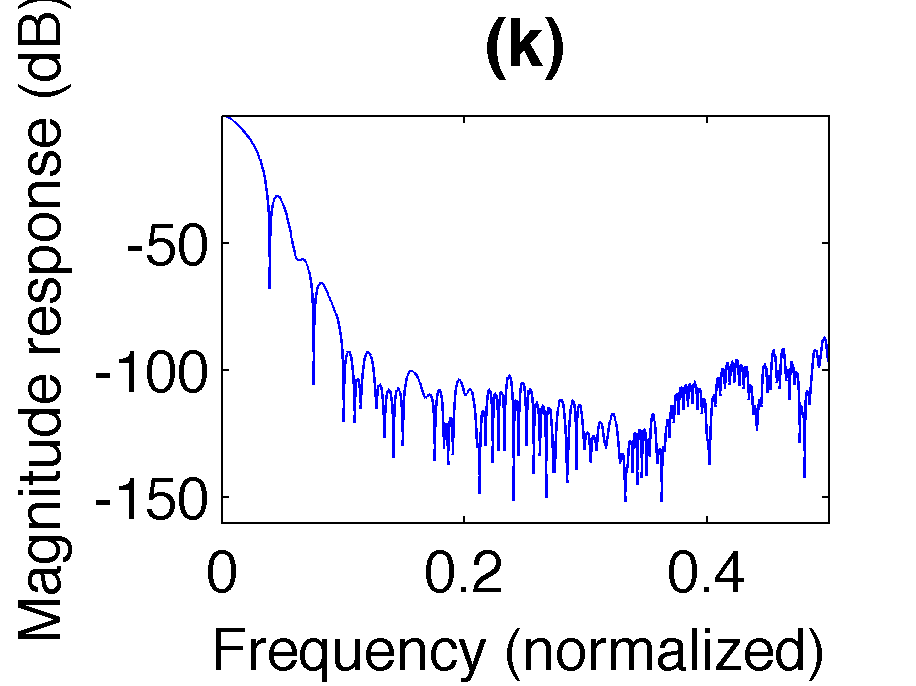}
\includegraphics[width=0.15\textwidth]{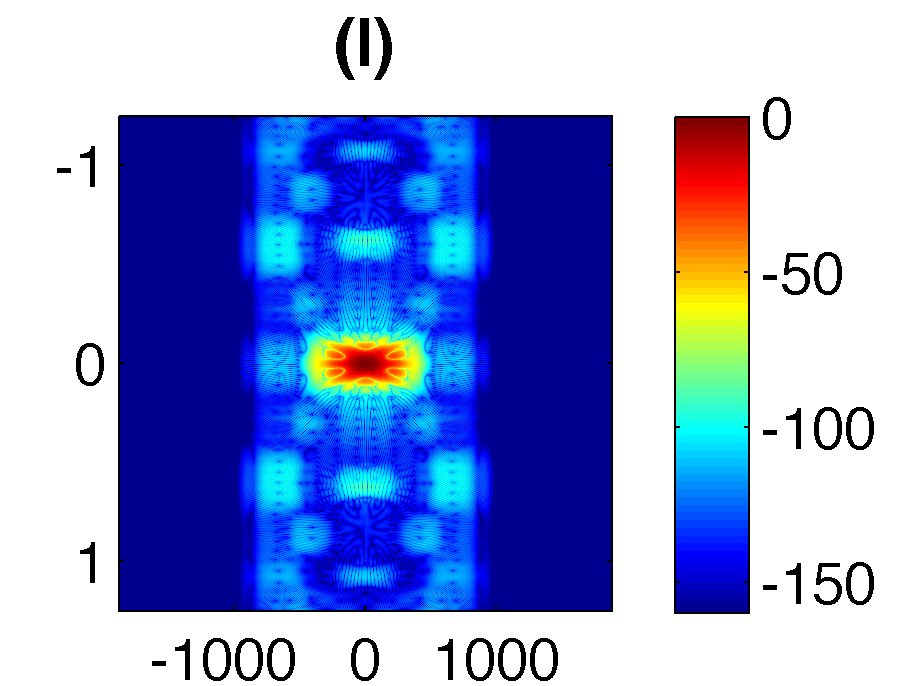}\\

\includegraphics[width=0.15\textwidth]{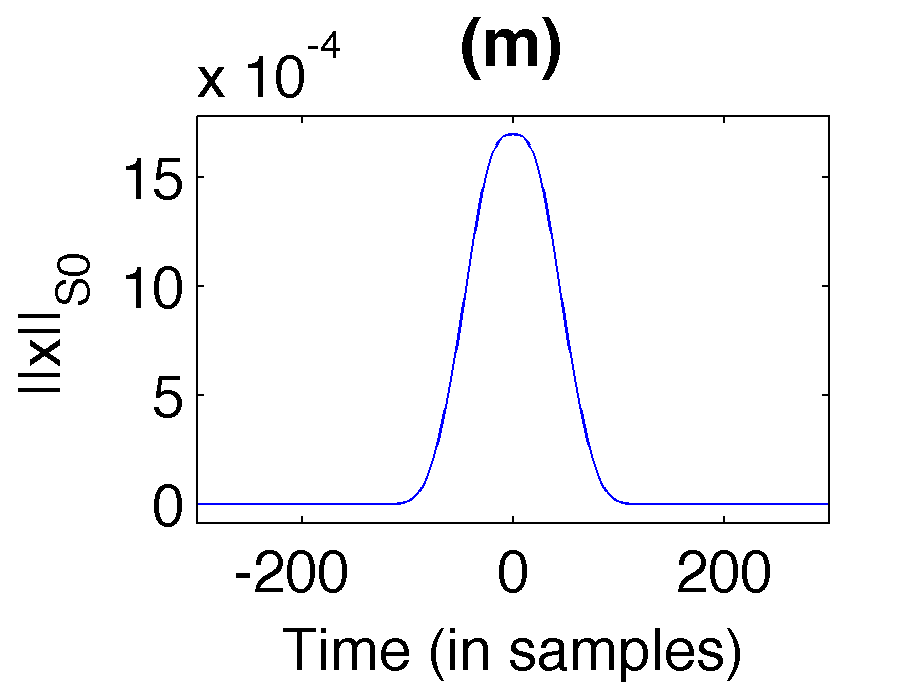}
\includegraphics[width=0.15\textwidth]{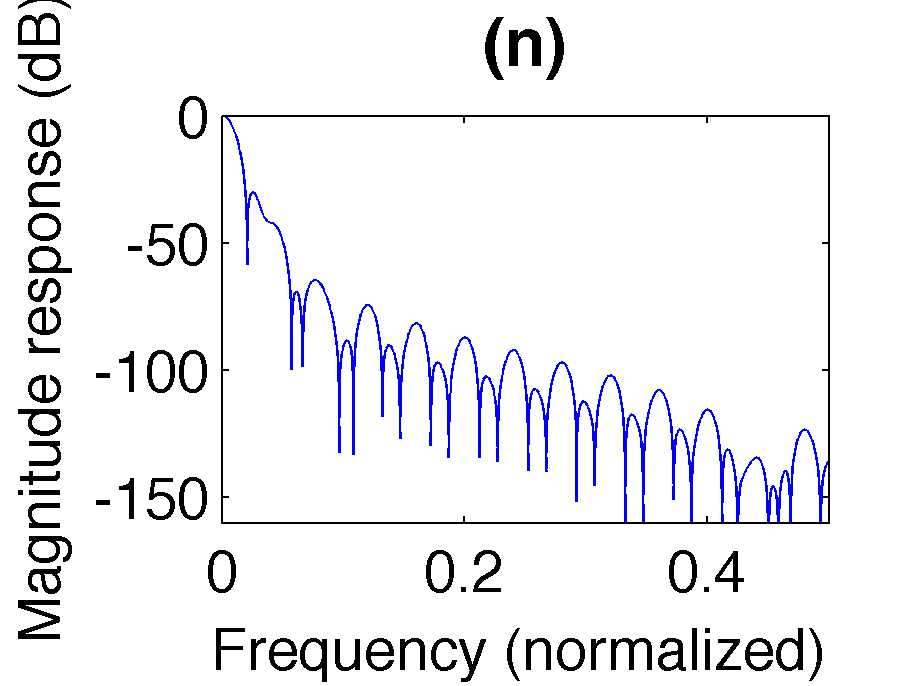}
\includegraphics[width=0.15\textwidth]{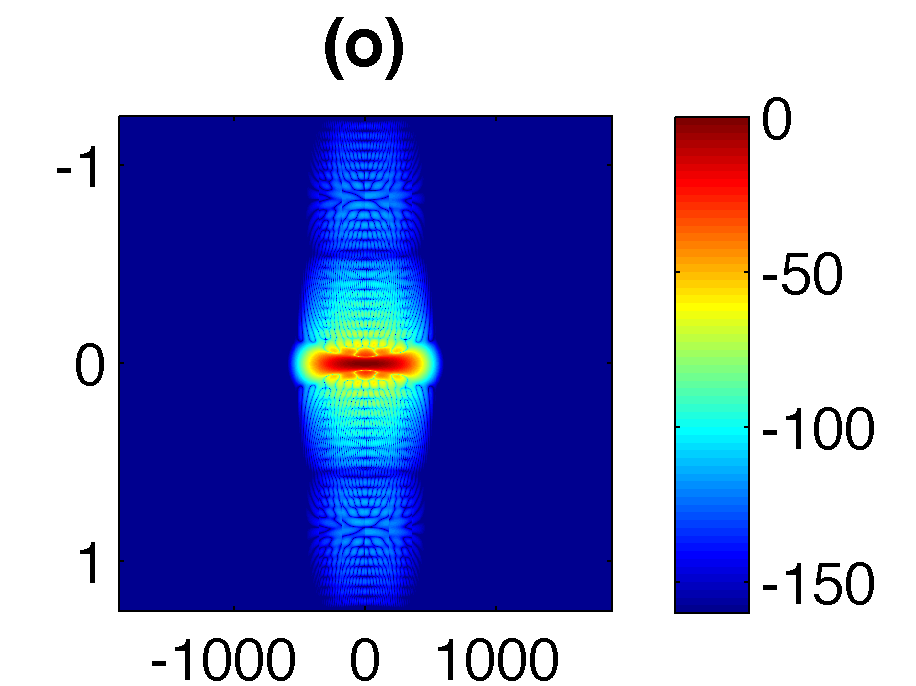}\\

\includegraphics[width=0.15\textwidth]{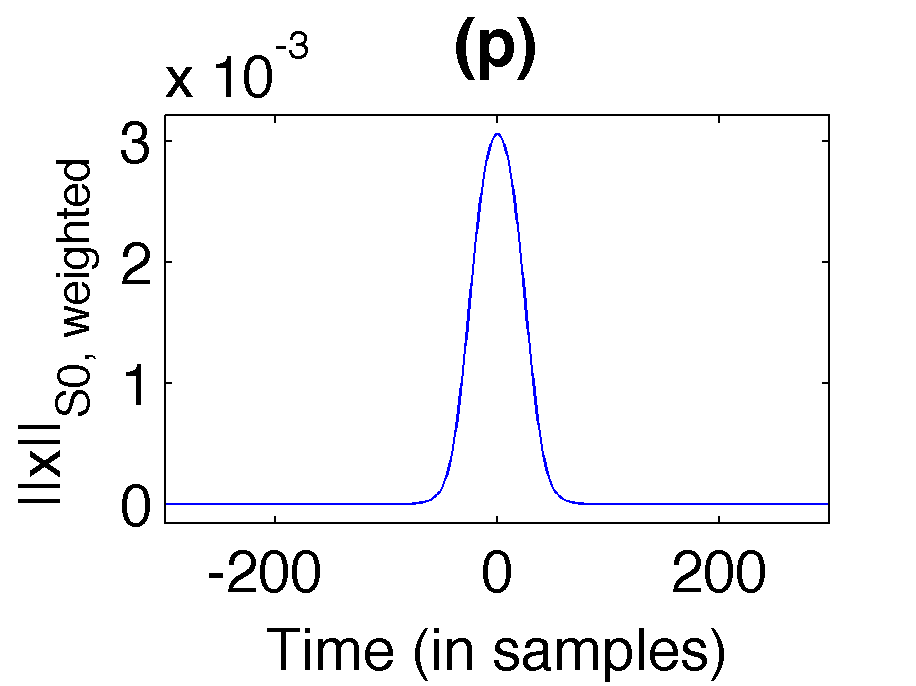}
\includegraphics[width=0.15\textwidth]{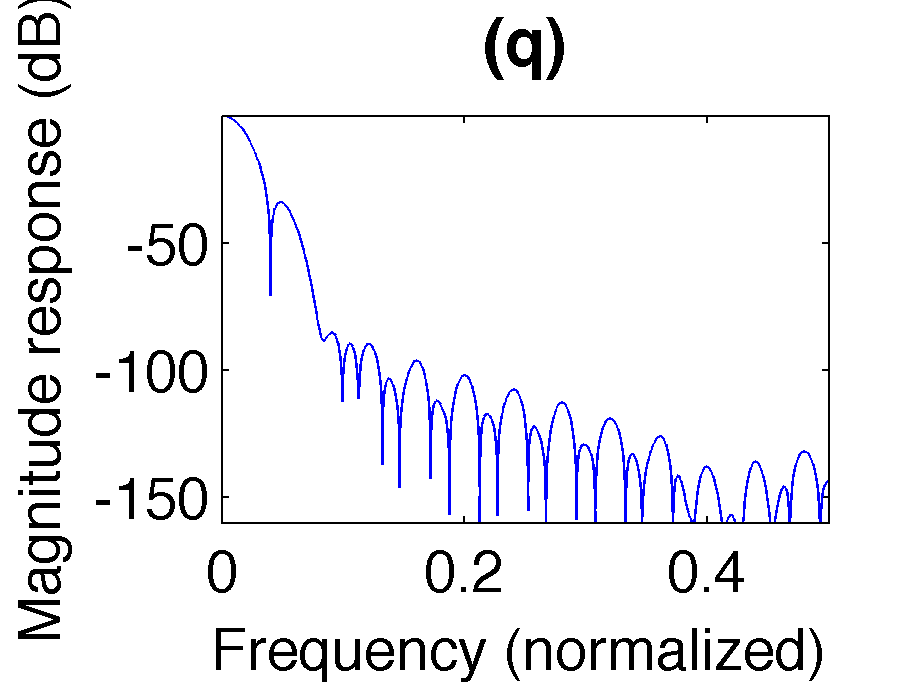}
\includegraphics[width=0.15\textwidth]{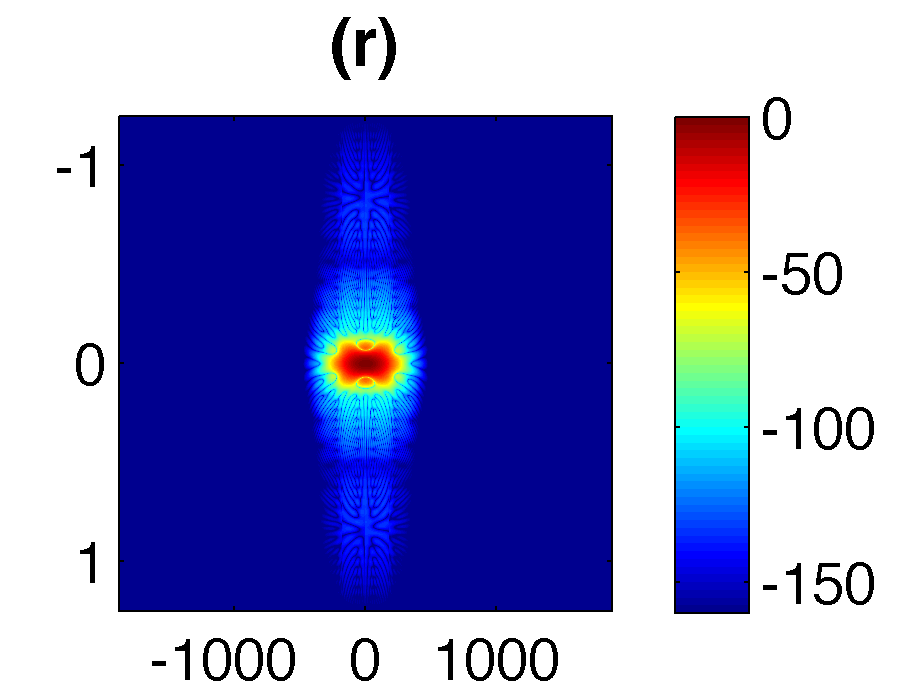}\\
\par\end{centering}
\caption{Results of time-frequency optimization for Exp. 1. The first column shows the time representation, the second the frequency representation and the third the ambiguity function of the window. From top to bottom: canonical dual, gradient, variance, energy variance, $S_0$ norm, weighted $S_0$ norm.}\label{fig:Experiment1-duals}
\end{figure}

\begin{table*}[thb] 
\begin{small}
\begin{center}\begin{tabular}{ |c|ccccccccccc|} 
 \hline 
 & $\frac{\|\nabla x\|_2}{10^{4}}$ & $\frac{\|\nabla \mathcal{F} x\|_2}{10^{4}}$ & $\frac{\text{var}(|x|)}{100}$ & $\frac{\text{var}(|\mathcal{F}x|)}{100}$ & $\frac{\sqrt{\text{var}(|x|^2)}}{10^3}$ & $\frac{\sqrt{\text{var}(|\mathcal{F}x|^2)}}{10^{3}}$ & $\|x\|_{S_0}$ & $\|x\|_{S_0,w}$ & $\frac{\|x\|_{1}}{100}$ & $\frac{\|\mathcal{F}x\|_{1}}{10}$ & $\frac{\|x\|_{2}}{10}$\\ 
 \hline 
 $x_{\text{can}}$  & $  \mathbf{0.2074} $ & $   3.2155 $ & $  40.0898 $ & $   0.3523 $ & $  17.7331 $ & $ \mathbf{1.1433} $ & $   1.3883 $ & $   1.4859 $ & $   \mathbf{1.3638} $ & $   1.8670 $ & $ \mathbf{0.5422} $ \\ 
 $x_{\nabla}$ & $   0.6638 $ & $   1.4988 $ & $   8.2516 $ & $   0.8612 $ & $   8.2638 $ & $   3.6602 $ & $   1.4294 $ & $   0.7527 $ & $   1.9182 $ & $   1.6855 $ & $   1.1069 $ \\ 
 $x_{\text{var}}$ & $   0.7941 $ & $   1.5037 $ & $  \mathbf{4.2567} $ & $   1.6165 $ & $   8.2908 $ & $   4.3791 $ & $   1.5032 $ & $   0.7163 $ & $   2.0872 $ & $   1.6718 $ & $   1.2612 $ \\ 
 $x_{\text{envar}}$ & $   0.6662 $ & $  \mathbf{1.4978} $ & $   8.2108 $ & $   0.9996 $ & $   \mathbf{8.2580} $ & $   3.6734 $ & $   1.4310 $ & $   0.7534 $ & $   1.9213 $ & $   1.6852 $ & $   1.1096 $ \\ 
 $x_{S_0}$ & $   0.2796 $ & $   2.1551 $ & $  18.1214 $ & $   \mathbf{0.2434} $ & $  11.8833 $ & $   1.5418 $ & $ \mathbf{1.2881} $ & $   0.9455 $ & $   1.4952 $ & $   1.7094 $ & $   0.6397 $ \\ 
 $x_{S_0,w}$ & $   0.6498 $ & $   1.5361 $ & $   5.8810 $ & $   0.8149 $ & $   8.4695 $ & $   3.5830 $ & $   1.4092 $ & $  \mathbf{0.6866} $ & $   1.9418 $ & $   \mathbf{1.6681} $ & $   1.0987 $ \\ 
 \hline 
 \end{tabular}\end{center} 
  \end{small}
 \caption{Time and frequency concentration measures for the solution dual windows (Exp. 1). Columns are the various criteria, rows indicate the different solutions windows. Note that the criteria have been scaled with appropriate powers of $10$ to improve readability of the table. Best results are indicated in bold.}\label{tab:ex1-crit1}
\end{table*}

\begin{table}[thb] 
\begin{small}
\begin{center}\begin{tabular}{ |c|ccccc|} 
 \hline 
 & $-3$~dB-W & ML-W & 1/pr-W & SL-A & SL-D\\ 
 \hline 
 $x_{\text{can}}$ & $  27.5000 $ & $   \mathbf{1.2292} $ & $ 295.8398 $ & $  16.3689 $ & $ 106.0177 $ \\ 
 $x_{\nabla}$    & $   8.5000 $ & $   3.9792 $ & $ 295.6575 $ & $  31.5146 $ & $ \mathbf{124.6016} $ \\ 
 $x_{\text{var}}$  & $   \mathbf{8.1667} $ & $   4.0208 $ & $ \mathbf{304.5363} $ & $  26.6322 $ & $  70.3673 $ \\
 $x_{\text{envar}}$ & $   8.5000 $ & $   3.9792 $ & $ 295.6575 $ & $  31.5040 $ & $  76.8473 $ \\ 
 $x_{S_0}$        & $  16.1667 $ & $   2.1042 $ & $ 293.9675 $ & $  30.0840 $ & $ 109.6085 $ \\ 
 $x_{S_0,w}$     & $   8.8333 $ & $   4.0208 $ & $ 281.5524 $ & $  \mathbf{33.9670} $ & $ 120.2623 $ \\ 
 \hline 
 \end{tabular}\end{center} 
 \end{small}
 \caption{Window quality measures for the solution dual windows. Rows indicate the different solutions windows, columns are the various criteria (from left to right): $-3$~dB width in time (in percent of $L_h$), main lobe width in frequency (in percent of the full frequency range) and the reciprocal value of their product (providing a measure of joint TF concentration), side lobe attenuation in frequency (in dB) and side lobe decay (in dB, measured as the amplitude difference of the maximum sidelobe and the largest of the $3$ final side lobes below the Nyquist frequency). Best results are indicated in bold.}\label{tab:ex1-crit2}
\end{table} 

\textbf{Experiment 2 - Controlling the time-frequency trade-off:} 

It is well known that no function can be arbitrarily concentrated in both the time and the frequency domain simultaneously. When choosing a dual window to a given Gabor frame the concentration is further limited by the duality conditions, the shape and the quality of the given frame. Oversimplified, a badly conditioned Gabor frame (with large frame bound ratio $B/A$), admits only badly concentrated duals. However, even if the canonical dual window is well concentrated overall, applications might benefit from the improvement of time concentration versus frequency concentration and vice-versa. To see this, just recall that the TF shape of the synthesis window limits the precision of TF processing.

The following experiment demonstrates the surprising flexibility that the set of dual windows allows when choosing the appropriate TF concentration trade-off. The system parameters are the same as in Exp. 1: $L_g = 240$, $a = 50$, $M=300$ and dual window support less or equal to $L_h = 600$. However, to provide a more diverse set of examples, we exchanged the Tukey window for an Itersine window. Based on the good results in Exp. 1, we selected the time and frequency gradient priors to control the TF spread and optimize 
\[
\mathop{\operatorname{arg~min}}\limits _{x \in \mathcal{C}_{\text{dual}}\cap \mathcal{C}_{\text{supp}}} \lambda_1 \|\nabla \mathcal{F} x\|_2^2 + \lambda_2\|\nabla x\|_2^2
\]
for varying $\lambda_1,\lambda_2 \in\RR^+$, therefore balancing both concentration measures against one another. Recall that $\|\nabla \mathcal{F} x\|_2^2$ leads to concentration in time, while $\|\nabla x\|_2^2$ promotes concentration in frequency. 

The results, presented in Figure~\ref{fig:Experiment2-duals} and Table~\ref{tab:ex2-crit2}, demonstrate the large amount of freedom when choosing the TF concentration trade-off. It also shows that extreme demands on either time or frequency concentration come at the cost of other properties. In this particular example, time concentration comes at the cost of worse sidelobe attenuation, while strong demands on frequency concentration inhibit quick frequency decay. Despite this, all four solution windows behave as expected and show reasonable to very good overall TF concentration.

\begin{figure}[!thp]
\begin{centering}

\includegraphics[width=0.15\textwidth]{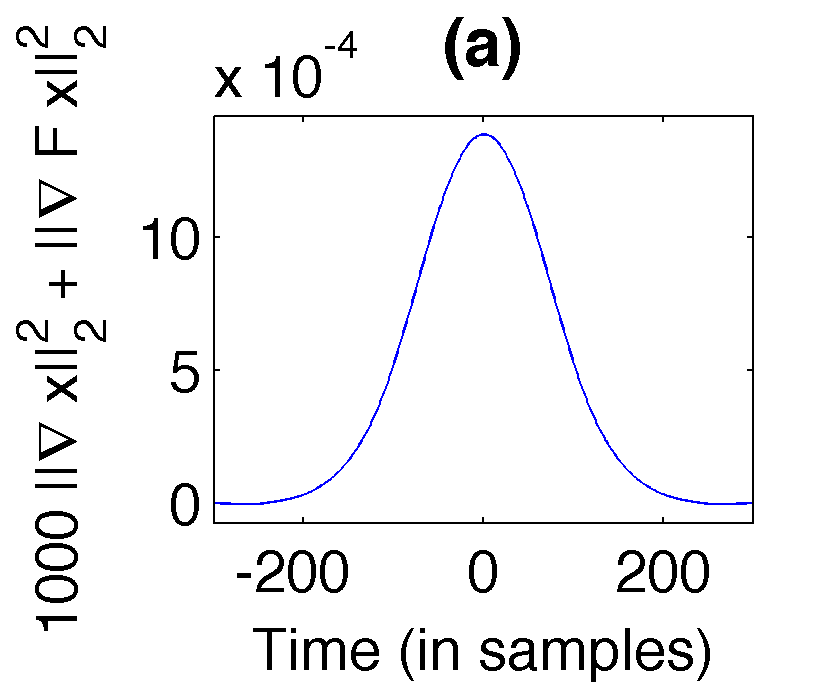}
\includegraphics[width=0.15\textwidth]{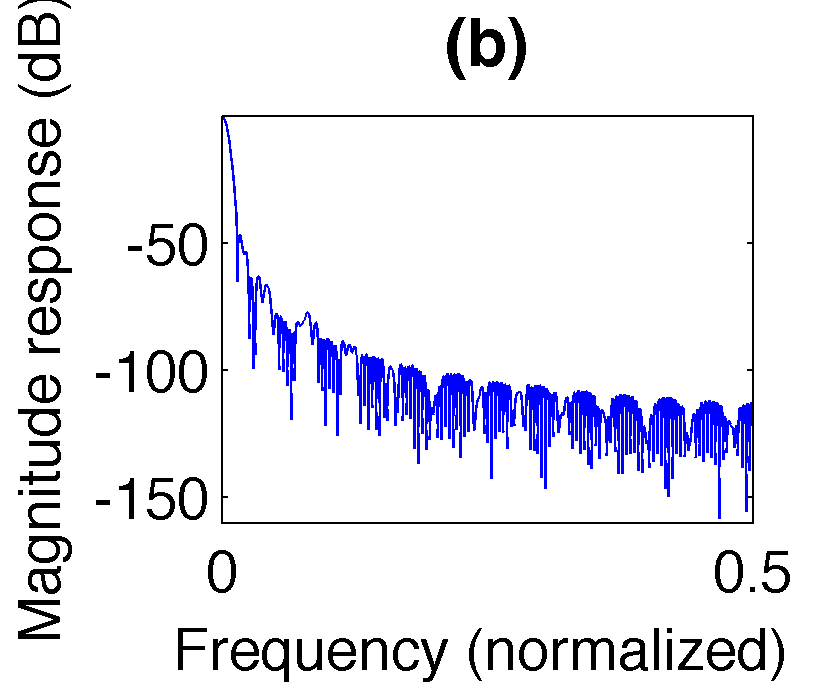}
\includegraphics[width=0.15\textwidth]{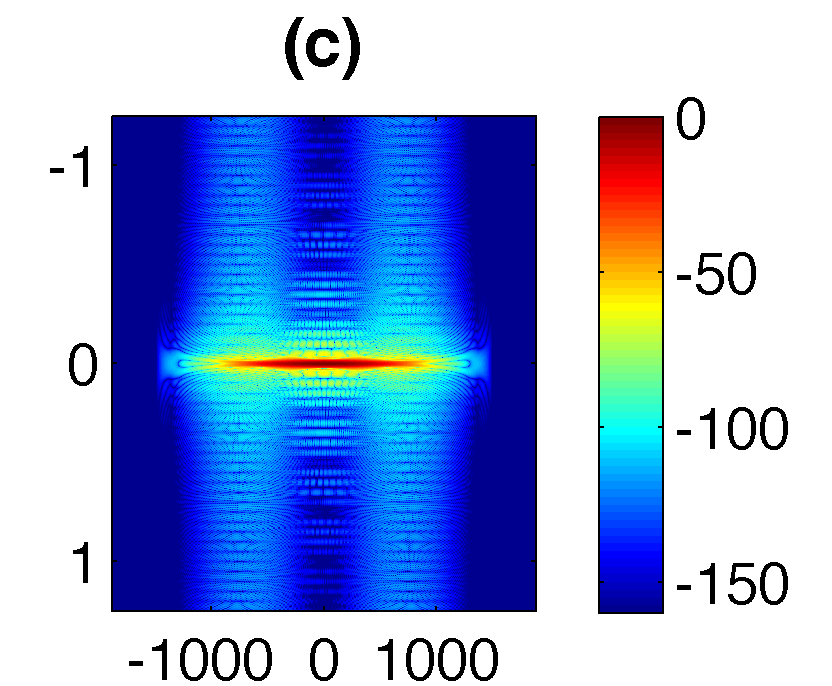}\\

\includegraphics[width=0.15\textwidth]{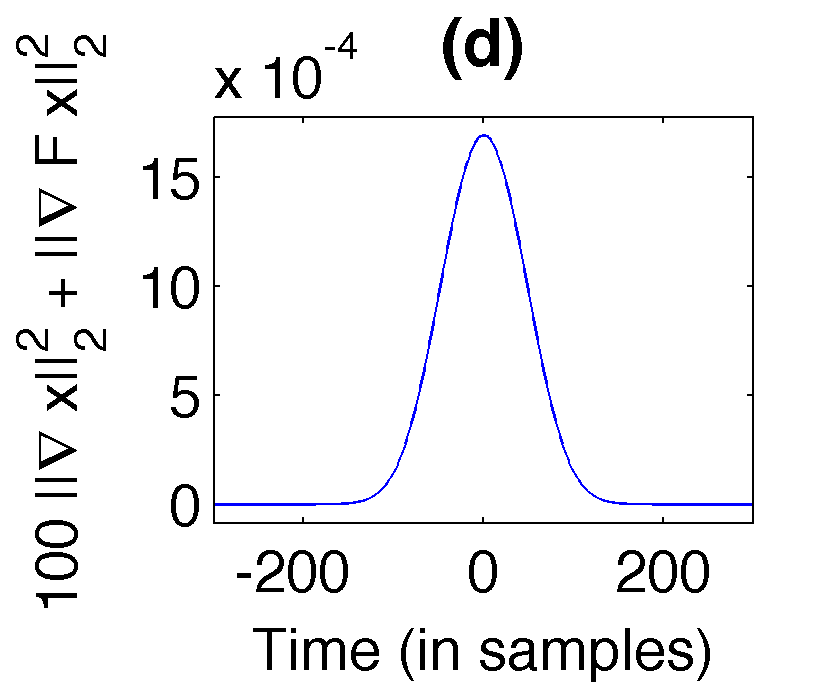}
\includegraphics[width=0.15\textwidth]{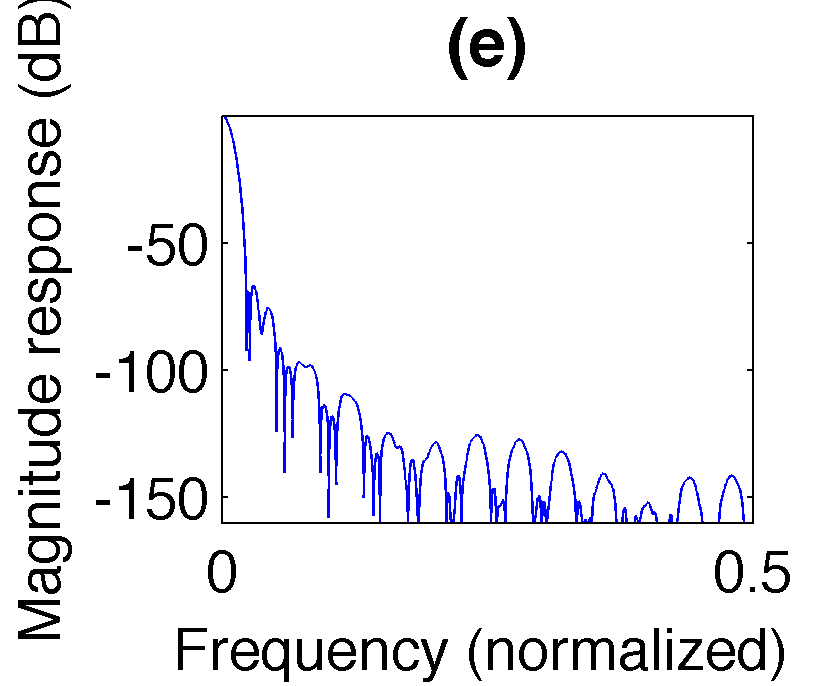}
\includegraphics[width=0.15\textwidth]{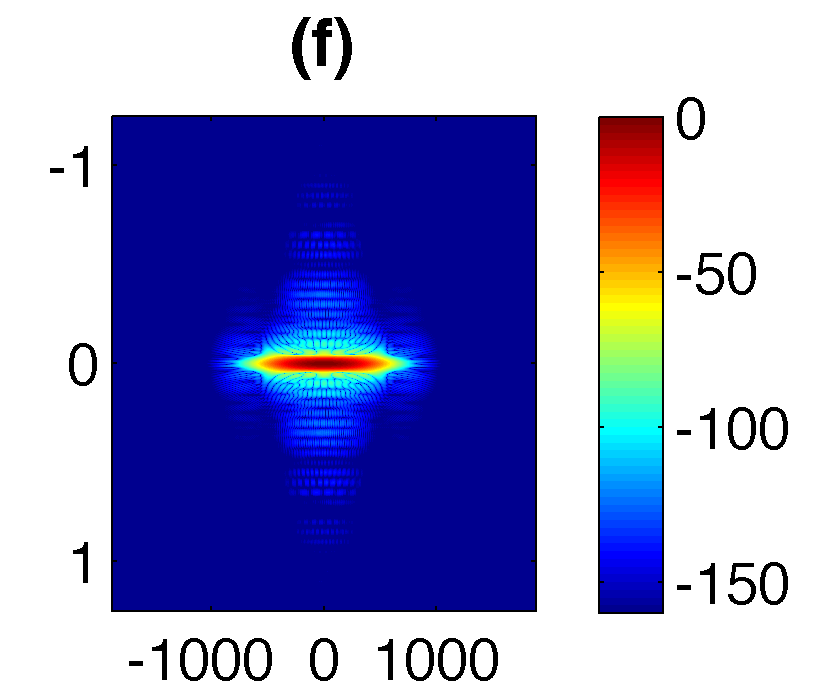}\\

\includegraphics[width=0.15\textwidth]{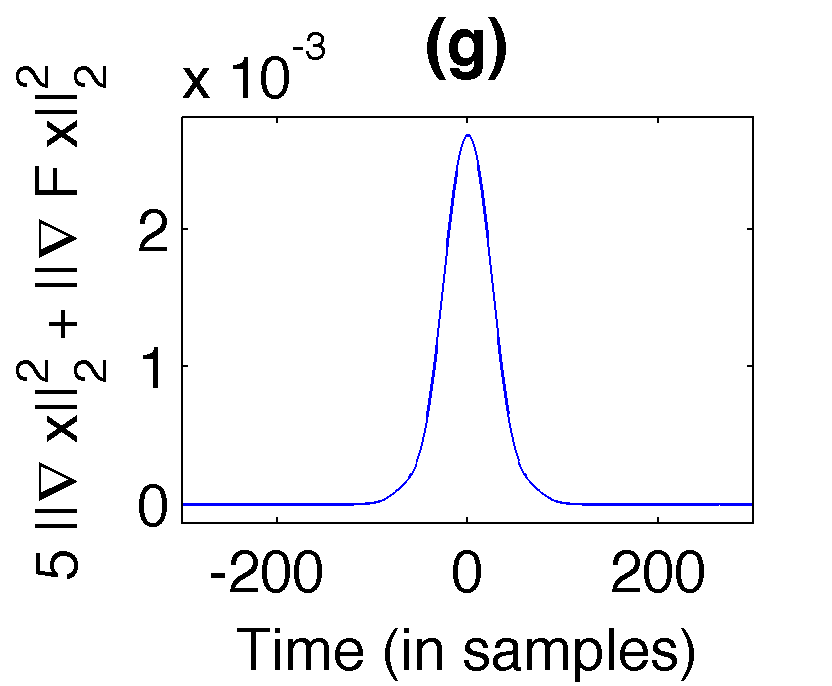}
\includegraphics[width=0.15\textwidth]{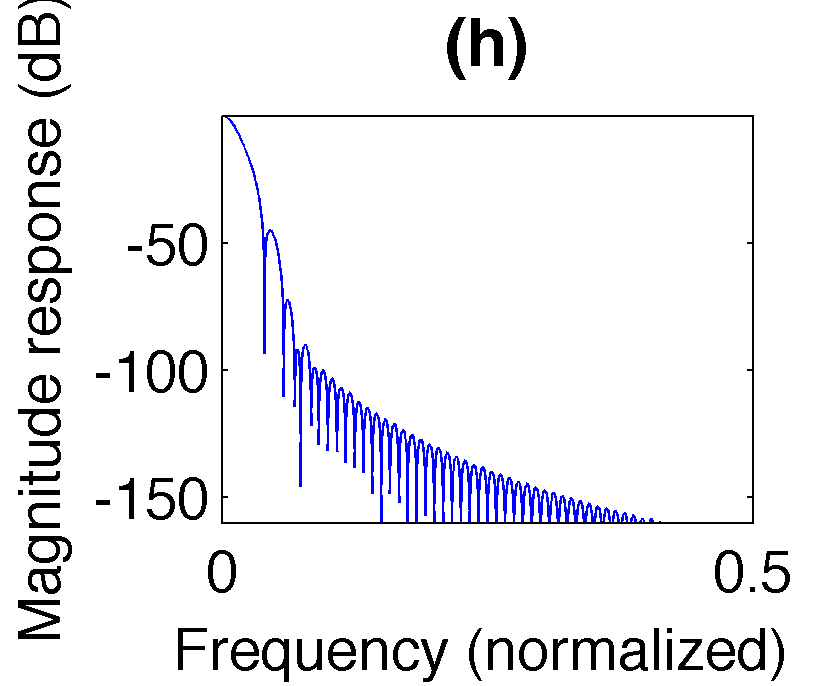}
\includegraphics[width=0.15\textwidth]{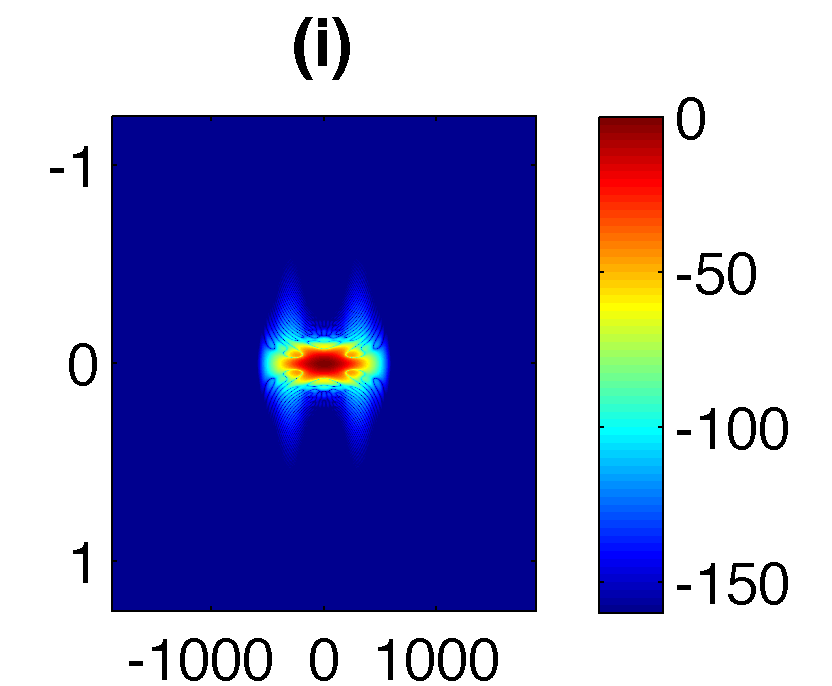}\\

\includegraphics[width=0.15\textwidth]{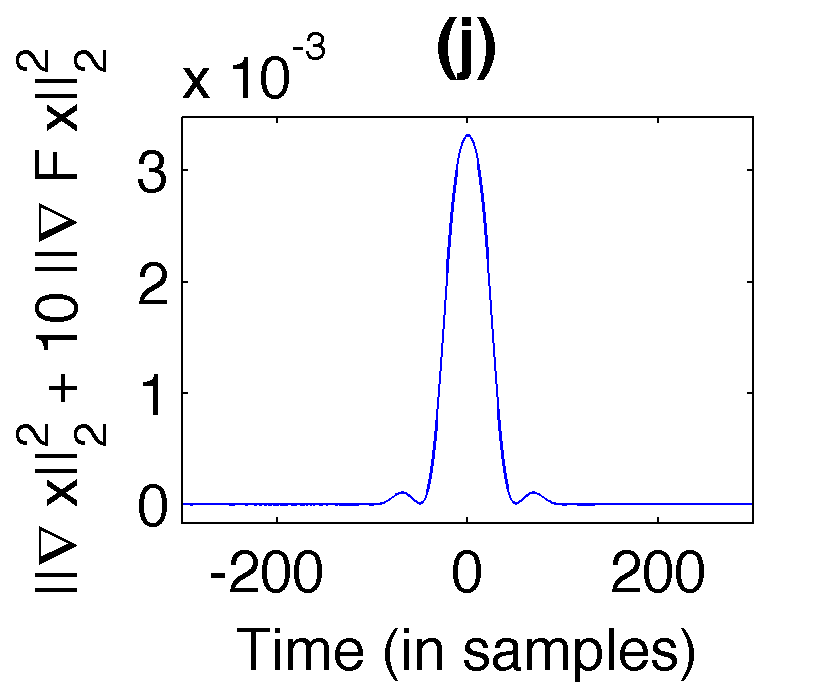}
\includegraphics[width=0.15\textwidth]{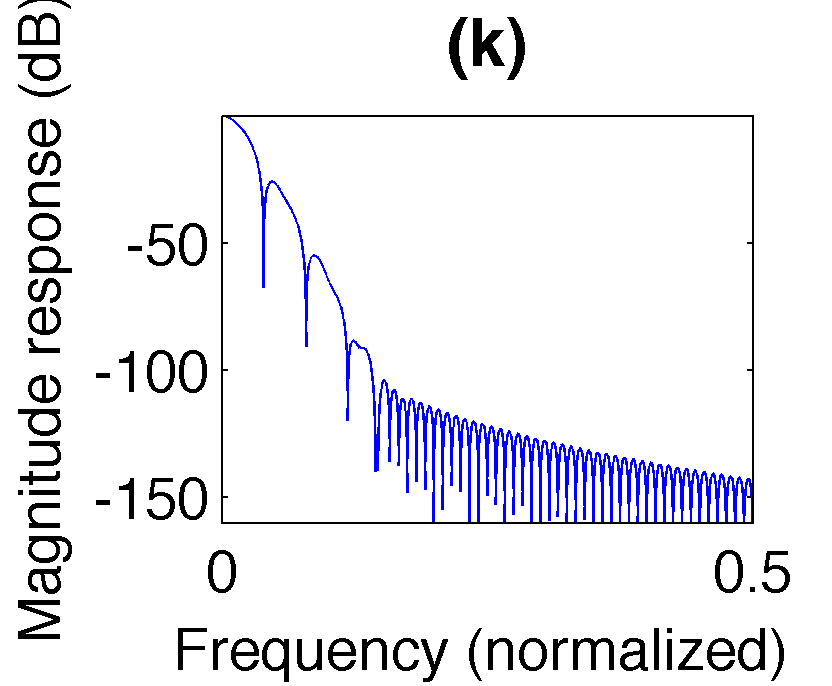}
\includegraphics[width=0.15\textwidth]{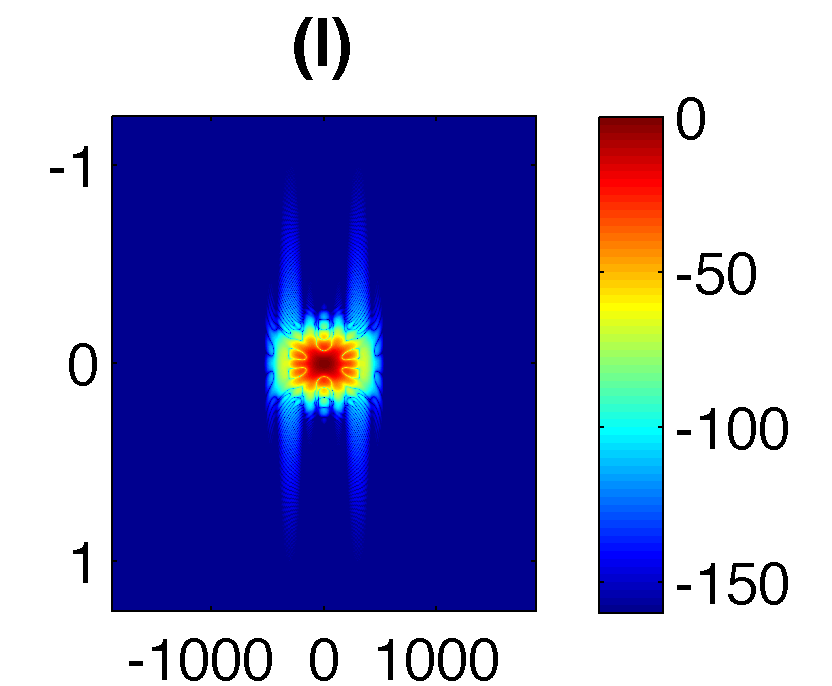}\\

\end{centering}
\caption{Results for the TF trade-off experiment optimizing $\lambda_1\|\nabla \mathcal{F} x\|_2^2 + \lambda_2\|\nabla x\|_2^2$. (a)-(c) $\lambda_1 = 1000$, $\lambda_2 = 1$. (d)-(f) $\lambda_1 = 100$, $\lambda_2 = 1$. (g)-(i) $\lambda_1 = 5$, $\lambda_2 = 1$. (j)-(l)  $\lambda_1 = 1$, $\lambda_2 = 10$.}\label{fig:Experiment2-duals}
\end{figure}

\begin{table}[thb] 
\begin{small}
\begin{center}\begin{tabular}{ |c|ccccc|} 
 \hline 
 & $-3$~dB-W & ML-W & 1/pr-W & SL-A & SL-D \\ 
 \hline 
 $x_{1/10}$    & $ 8.1667 $ & $   3.9375 $ & $ 621.9631 $ & $  25.7860 $ & $ 153.8656 $  \\ 
 $x_{5/1}$    & $  9.8333 $ & $   4.0208 $ & $ 505.8400 $ & $  45.0251 $ & $ 161.5928 $  \\ 
 $x_{100/1}$ & $ 18.5000 $ & $   2.3125 $ & $ 467.4945 $ & $  66.7578 $ & $ 103.8376 $  \\ 
 $x_{1000/1}$& $  28.1667 $ & $   1.4792 $ & $ 480.0400 $ & $  46.6063 $ & $  71.2169 $ \\ 
\hline 
 \end{tabular}\end{center} 
 
 \begin{center}\begin{tabular}{ |c|cc|} 
 \hline 
 & $\frac{\|\nabla x\|_2}{10^{4}}$ & $\frac{\|\nabla \mathcal{F} x\|_2}{10^{4}}$\\ 
 \hline 
 $x_{1/10}$    & $   0.7887 $ & $   1.5032 $ \\ 
 $x_{5/1}$    & $   0.5005 $ & $   1.7009 $ \\ 
 $x_{100/1}$  & $   0.2365 $ & $   2.6391 $ \\ 
 $x_{1000/1}$ & $   0.1545 $ & $   4.1818 $ \\ 
\hline 
 \end{tabular}\end{center} 
 \end{small}
 
 \caption{Window quality measures and prior values for the solution windows of the TF trade-off experiment. The subscript refers to the ratio $\lambda_1/\lambda_2$ of regularization parameters.}\label{tab:ex2-crit2}
\end{table} 

\textbf{Experiment 3 - Smooth dual windows with short support:} 
So far, we have only considered weak support constraints, therefore allowing for a wide variety of dual window choices.
Sometimes a stricter constraint is desired, e.g. to reduce delay in real-time application or to minimize the required number
of multiplication operations. Also, for fixed redundancy, reducing the time step and the number of frequency channels is computationally
cheaper than an increase of the number of frequency channels (amounting to a longer FFT length). Therefore, we consider
a non-painless setup, i.e. a system with few frequency channels: $M<L_g$.

The construction of dual Gabor windows with short support has been attempted previously,
e.g. by the truncation method in~\cite{st98-8}, cf. Section~\ref{sec:design}. However, the solutions obtained, 
are often badly localized in frequency. This is a result of the truncation method
yielding to nonsmooth solutions, i.e. solutions with ``jumps'' (discontinuity-like
behavior) in time. By solving the optimization problem Equation~\eqref{eq:minimizedualsupp} with suitably chosen 
priors $f_i$, better results can be obtained, showing reasonable smoothness and 
therefore frequency concentration. 

We start from a Gabor system $\mathcal{G}(g,30,60)$ with redundancy $2$. 
The analysis window $g$ is chosen as a Nuttall window of length $L_{g}=120$ samples\footnote{The Nuttall window~\cite{nu81-1} 
is a compactly supported $4$-term window of the form $\chi_{[-0.5,0.5]}\sum_{k=0}^3 c_k \cos(2k\pi \cdot)$, with $c_0=0.355768,\ c_1=0.487396,\ c_2 = 0.144232$ and $
c_3 = 0.012604$. It has very good side lobe attenuation and decay properties, but is less popular than the wide-spread Hann and Blackman windows. However, out of those three windows, it best approximates the Gaussian and optimizes all the joint TF concentration measures discussed in Sec.~\ref{sec:prox} (see webpage).}. 

We desire a dual window $h$ with the same support as $g$, i.e. $L_h = L_g = 120$.
Furthermore, we aim to achieve localization and smoothness by selecting
the priors \mbox{$f_{1}=\|\cdot\|_{1}$},$f_{2}=\|\mathcal{F}(\cdot)\|_{1}$,
$f_{3}=\|\nabla(\cdot)\|_{2}^{2}$ and $f_{4}=\|\nabla\mathcal{F}(\cdot)\|_{2}^{2}$.
Here, $f_3,\ f_4$ have been chosen to induce smoothness and localization, while $f_1,\ f_2$ 
have been added to improve the shape of the window, in particular they serve as a counter to 
the solution's tendency to have multiple peaks. This is unwanted as it leads to windows 
with ambiguous temporal or frequency position. Heuristically, minimizing the $l^{1}$-norm pushes
all big coefficients to similar values, therefore achieving the suppression of
multiple significant peaks.

The results in Figure~\ref{fig:Experiments-FIR}(c)(d) show the
optimal dual window with regards to the regularization parameters
$\lambda_{1}=\lambda_{2}=0.001$ and $\lambda_{3}=\lambda_{4}=1$, chosen experimentally
to provide a good result. As reference, we included the least-squares solution provided by
the truncation method, see Figure~\ref{fig:Experiments-FIR}(e)(f). At closer examination, we see that 
the improved sidelobe decay of the optimized dual window comes at the cost of $5$~dB of side lobe attenuation,
compared to the least squares solution, cf. Table~\ref{tab:ex3-crit2}.

\begin{figure}[!thp]
\begin{centering}
\includegraphics[width=0.23\textwidth]{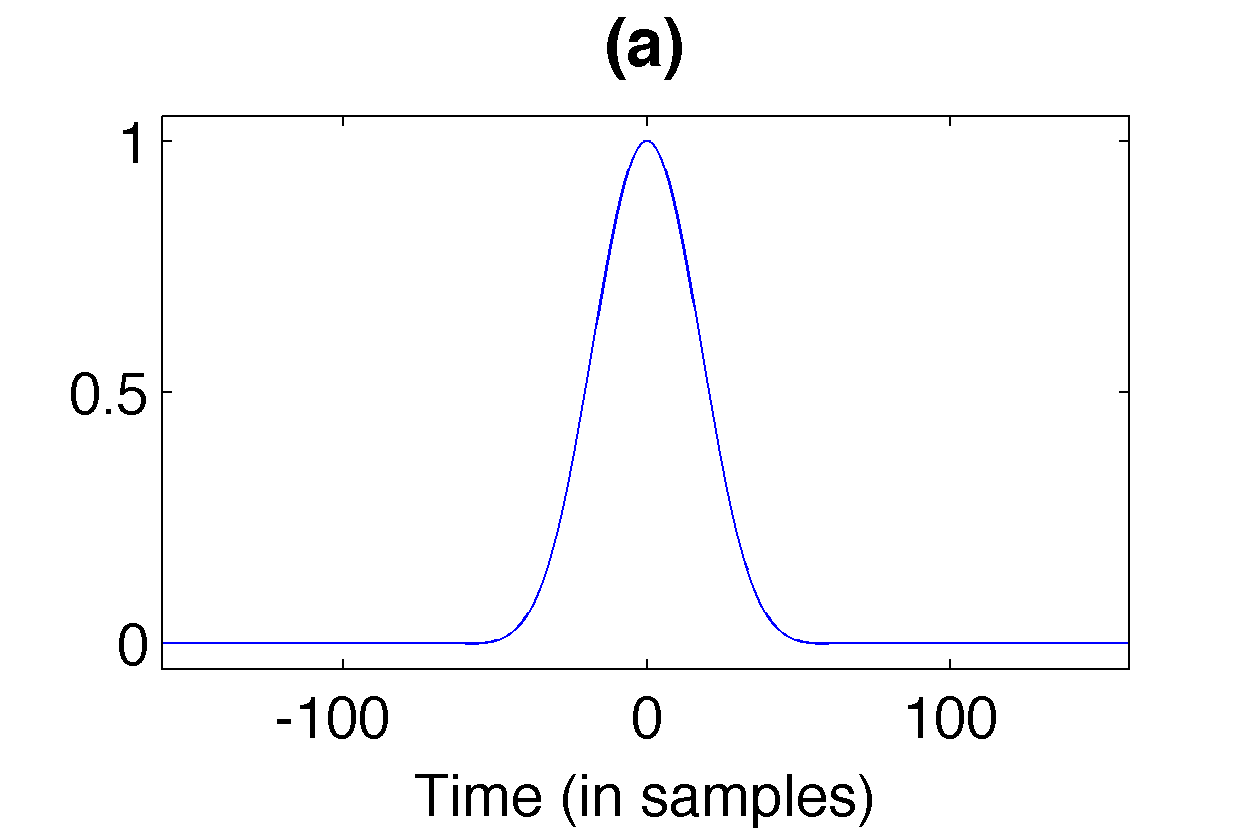}
\includegraphics[width=0.23\textwidth]{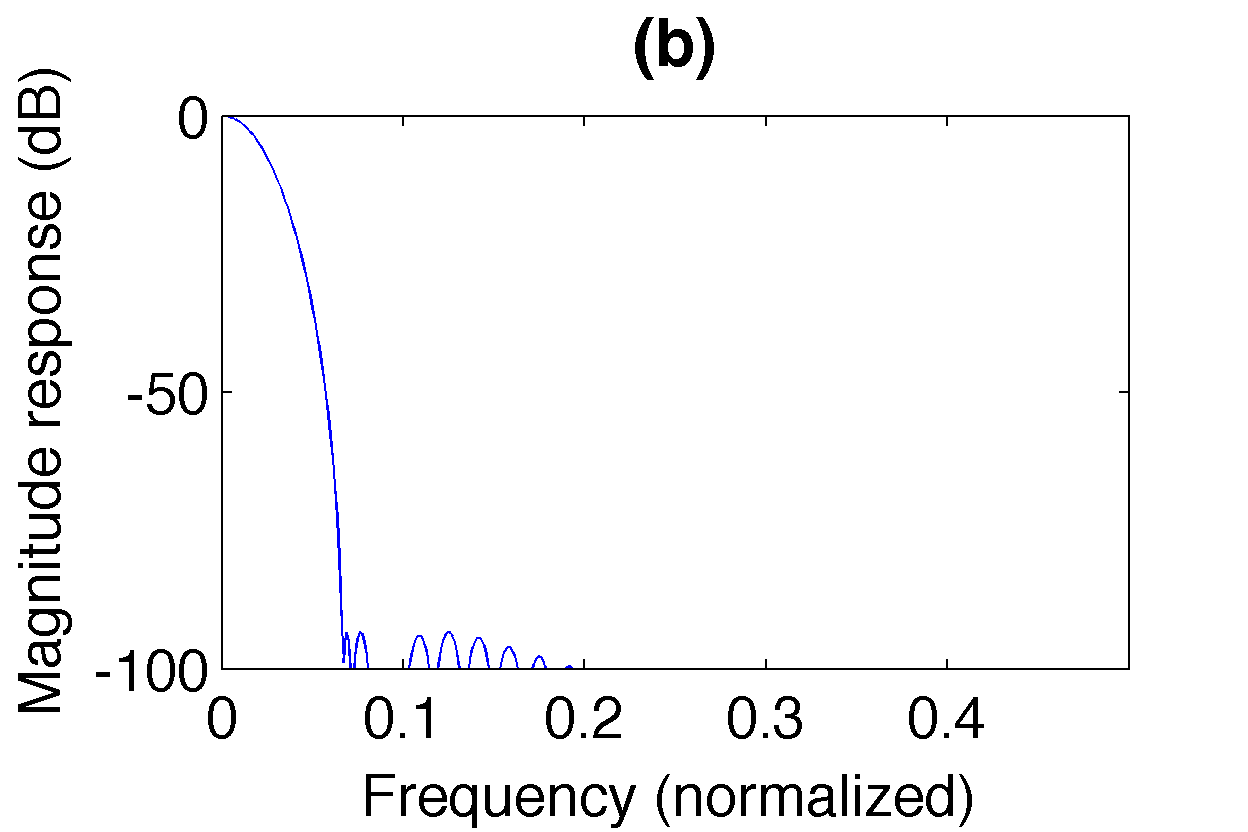} 
\par\end{centering}
\begin{centering}
\includegraphics[width=0.23\textwidth]{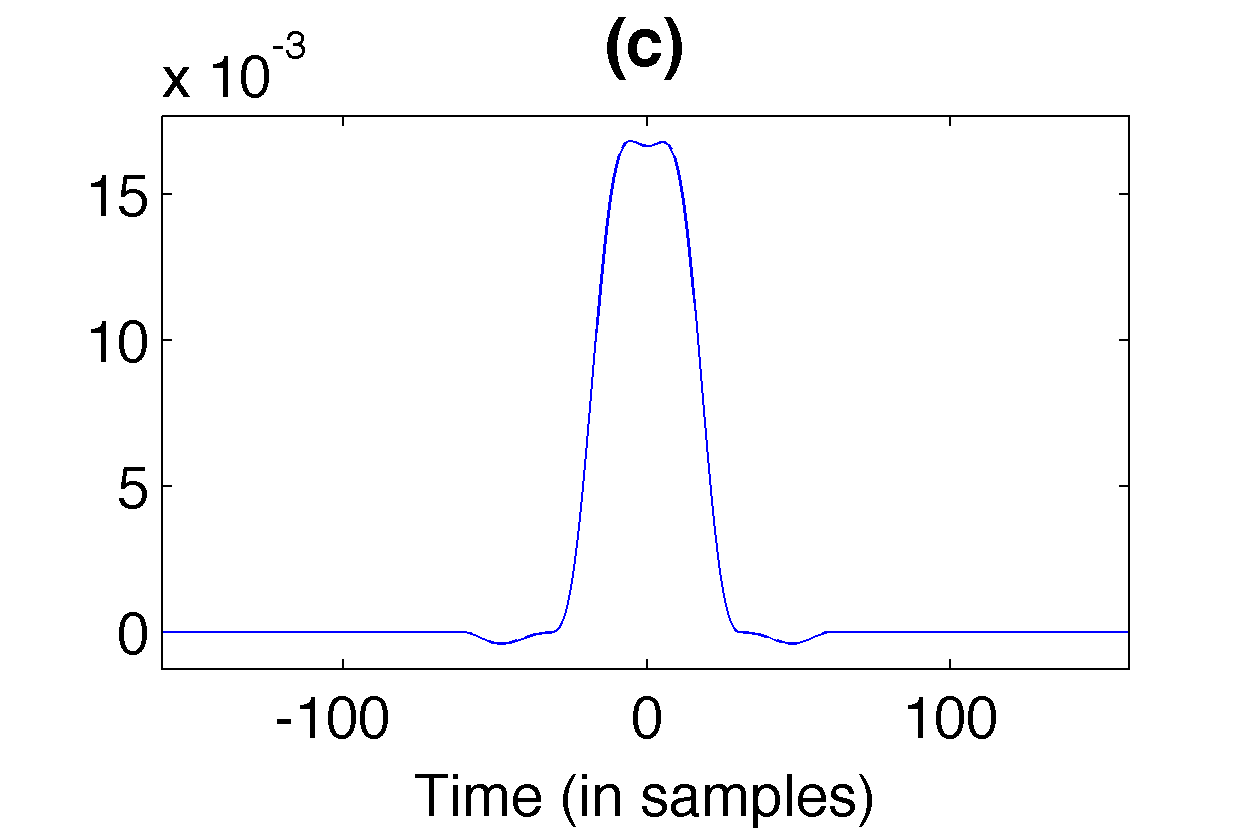}
\includegraphics[width=0.23\textwidth]{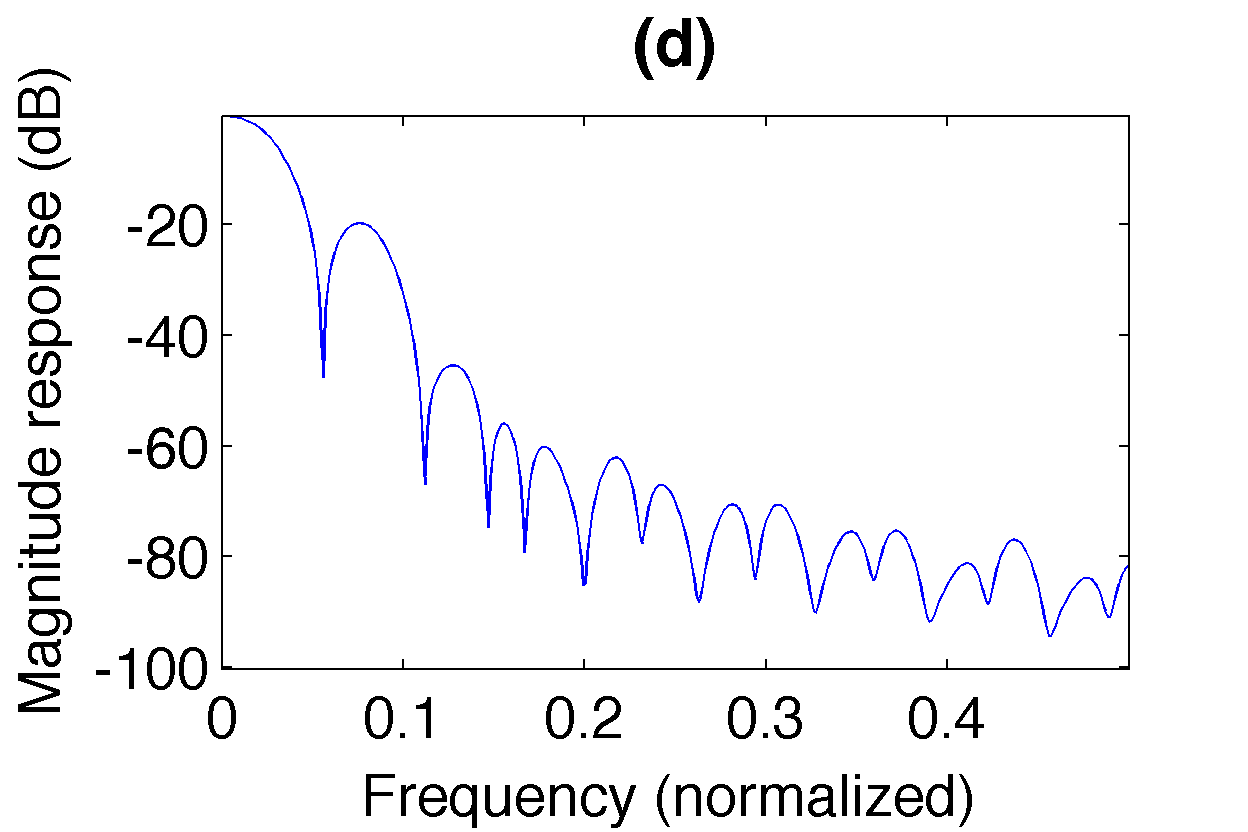} 
\par\end{centering}
\begin{centering}
\includegraphics[width=0.23\textwidth]{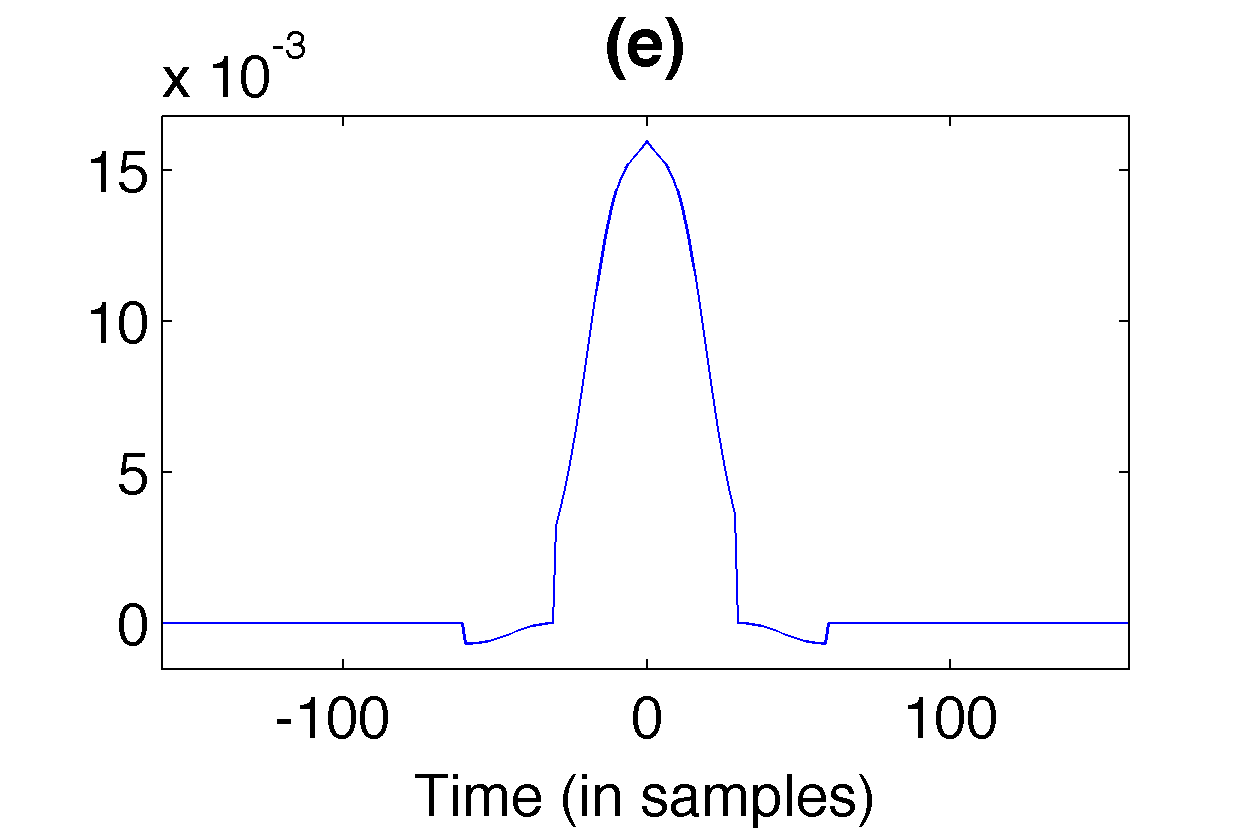}
\includegraphics[width=0.23\textwidth]{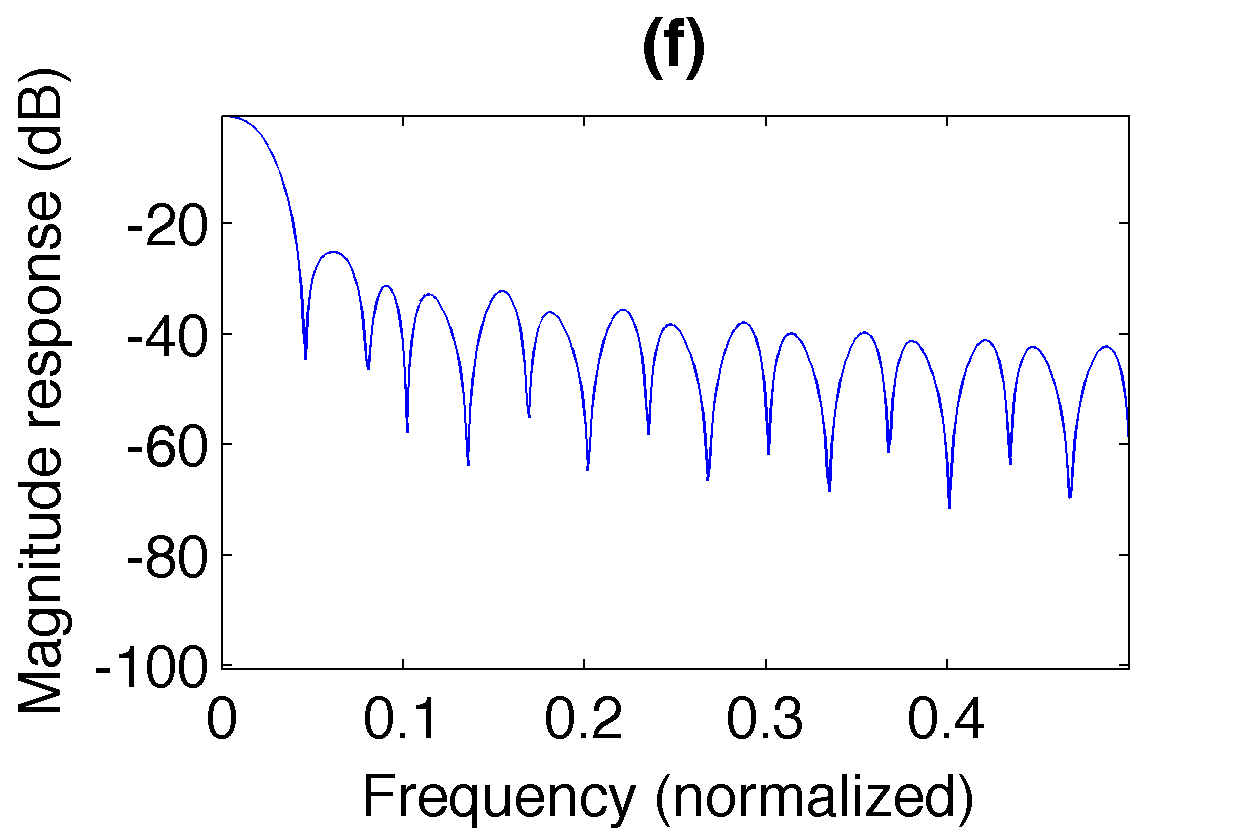} 
\par\end{centering}

\caption{Experiments. (a)(b) Analysis window in time and frequency.
(c)(d) Optimized synthesis window in time and frequency. (e)(f) Truncation method result in time and frequency.}\label{fig:Experiments-FIR}
\end{figure}

\begin{table}[thb] 
\begin{small}
\begin{center}\begin{tabular}{ |c|ccccc|} 
 \hline 
 & $-3$~dB-W & ML-W & 1/pr-W & SL-A & SL-D \\ 
 \hline 
 $x_{\text{trunc}}$ & $  34.1667 $ & $   \mathbf{4.6875} $ & $ \mathbf{124.8780} $ & $  \mathbf{24.6729} $ & $  20.3697 $  \\ 
 $x_{\text{opt}}$ & $  \mathbf{29.1667} $ & $   5.7292 $ & $ 119.6883 $ & $  19.4349 $ & $  \mathbf{69.4470} $  \\ 
 \hline 
 \end{tabular}\end{center} 
 \begin{center}\begin{tabular}{ |c|cccc|} 
 \hline 
 & $\frac{\|\nabla x\|_2}{100}$ & $\frac{\|\nabla \mathcal{F} x\|_2}{100}$ & $ \|x\|_1 $ & $ \|\mathcal{F}x\|_1 $\\ 
 \hline 
 $x_{\text{trunc}}$ &  $   0.6156 $ & $   2.8254 $ & $   \mathbf{0.9092} $ & $   0.6774 $ \\ 
 $x_{\text{opt}}$ &  $   \mathbf{0.5579} $ & $   \mathbf{2.3705} $ & $   0.9278 $ & $   \mathbf{0.6188} $ \\ 
 \hline 
 \end{tabular}\end{center} 
 \end{small}
 \caption{Window quality measures and prior values for the solution windows of the non painless experiment. The subscript 'opt' refers to optimization method and the subscript 'trunc' to truncation method.}\label{tab:ex3-crit2}
\end{table} 

In the setup above, the canonical dual window would have very long, quite possibly infinite
support. To guarantee compact support on $L_{h}=L_{g}$ for the canonical dual, going to the painless case would increase the number
of frequency channels to $M\geq120$ and 
increase the redundancy twofold, the latter being an unwanted side effect.
Alternatively, we could decide to keep the parameters $a = 30$, $M = 60$ fixed, but decrease 
the window size to $L_{g}\leq60$ for a painless case setup. However, this construction provides 
a system with a more than $8$ times larger frame bound ratio.
Consequently, the resulting canonical dual window, shown in Figure~\ref{fig:Experiments-painless}, 
shows bad frequency behavior and multiple significant peaks in time. In contrast,
the method proposed in this manuscript allows the construction of nicely shaped,
compactly supported dual Gabor windows at low redundancies, without
the strong restrictions of the painless case.\\

\begin{figure}[!thp]
\begin{centering}
\includegraphics[width=0.23\textwidth]{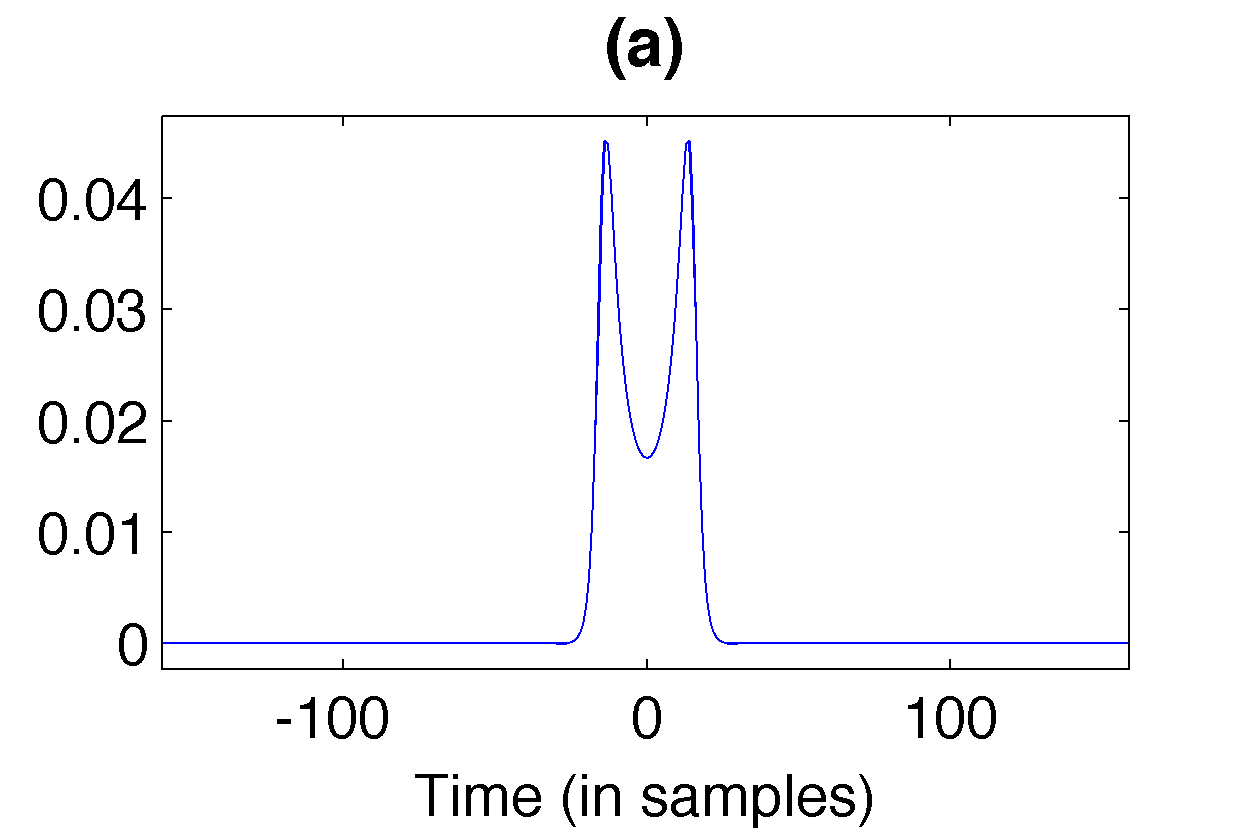}\includegraphics[width=0.23\textwidth]{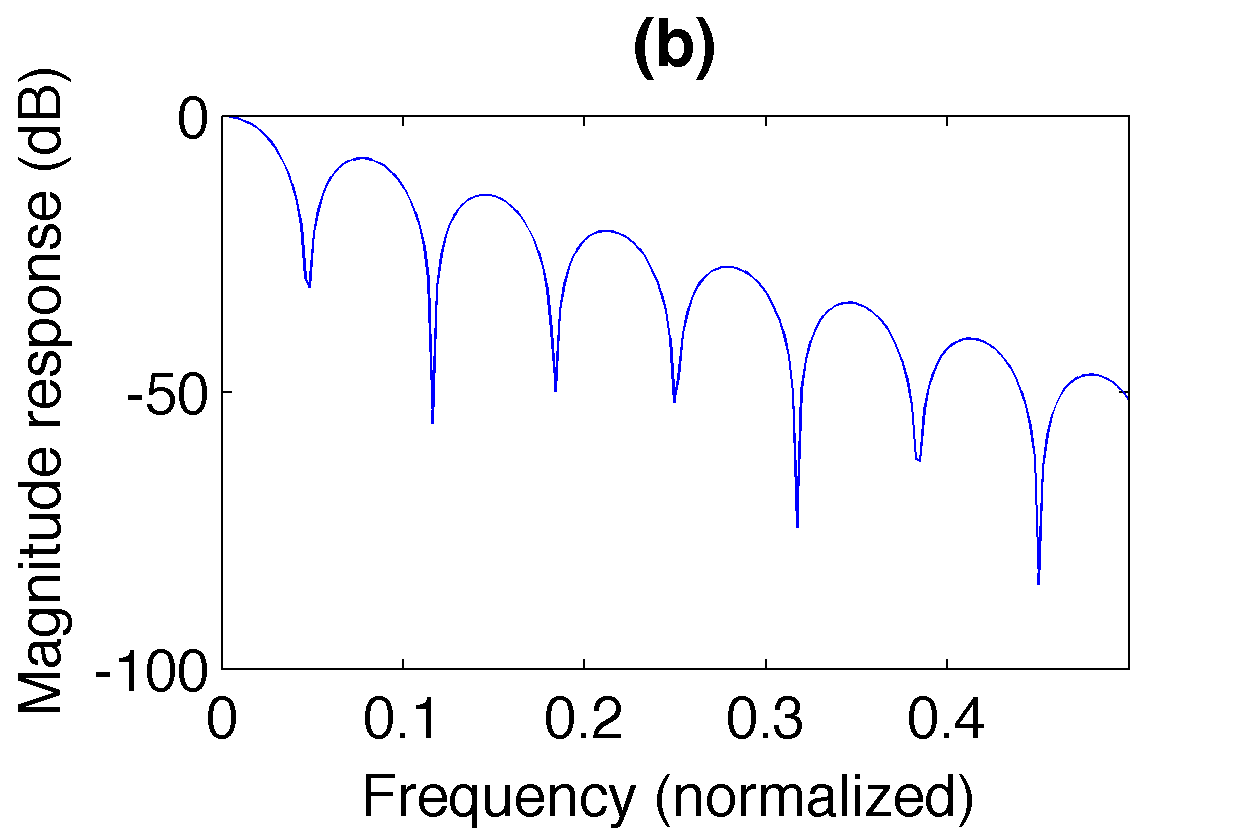} 
\par\end{centering}

\caption{Half-overlap painless case construction
($\mathcal{G}(g,30,60)$, $L_{g}=60$): Canonical dual window in time
(a) and in frequency (b).}\label{fig:Experiments-painless}
\end{figure}

\textbf{Excursion - Improvement of classical tight frame constructions:} 

Sometime significant simplifications of applications or algorithms are achieved, whenever tight frames are used, e.g. the SALSA algorithm for $\ell^1$ regularized least-squares~\cite{Afonso2010}.
Instead of computing the dual window to a previously selected analysis window, we now attempt the computation of a \emph{Gabor Parseval frame}\footnote{A Parseval frame is a tight frame with frame bound $1$.} with good properties, given the parameters $a,M$. A system $\mathcal G(h,a,M)$ is a Parseval frame, if it is a dual frame to itself. We consider only Parseval frames, since every tight frame is Parseval up to a scaling factor. The Gabor Parseval windows $h$ for parameters $a,M$ are characterized by the nonlinear equation system
\begin{equation*}
\frac{M}{a}\left\langle h,h[\cdot-nM]e^{2\pi im\cdot/a}\right\rangle =\delta[n]\delta[m],\label{eq_wexler_raz_tight}
\end{equation*}
obtained by setting $g=h$ in the WR equations~\eqref{eq_wexler_raz}. The solution set associated to this equation system is not convex. Indeed, little is known about the set beyond it being pathwise connected~\cite{lawi05} subset of $\{h\in\ell_2(\ZZ)~:~ \|h\|_2^2 = a/M\}$. Even worse, since we only consider real-valued solution windows, the connectedness property might be violated. Although no convergence guarantees can be given, we will provide a heuristic optimization scheme that provided good experimental results.

In order to implement our proposed scheme, we need a method to compute the projection onto the set of Parseval windows:
\begin{equation*}
P_{\mathcal{C}_{\text{Pars}}}(y)=\mathop{\operatorname{arg~min}}\limits_{x\in \mathcal{C}_{\text{Pars}}} \|x-y\|_2.
\end{equation*}
It can be shown~\cite{janssen2002characterization} that this projection can be computed via the formula
\begin{equation}
P_{\mathcal{C}_{\text{Pars}}}(y)=\bd{S}_{y,a,M}^{-1/2}y,
\end{equation}
where $\bd{S}_{y,a,M}=G^\ast_{y,a,M}G_{y,a,M}$ is the frame operator with respect to $\mathcal G(y,a,M)$, cf.~\cite{janssen2002characterization}. The result of $\bd{S}_{y,a,M}^{-1/2}y$ is called the \emph{canonical tight window} and efficient algorithms for its computation are freely available, e.g. in the LTFAT toolbox~\cite{ltfatnote015}, see also~\cite{jast02}. The canonical tight window always forms a Parseval frame.

We attempt to solve the tight problem with PPXA, simply replacing the projection on the dual set by projection onto the set of Parseval windows. At convergence, a final projection onto the tight set ensures the tightness property of the result. If a support constraint is desired, the final projection is based on a POCS-based algorithm. However, there is no convergence guarantee of this final step, even if $\mathcal{C}_{\text{Pars}}$ and $\mathcal{C}_{\text{supp}}$ are not disjoint. We have no theoretical guarantee to find a good solution to the problem with this method, since both the PPXA and POCS steps lack a convergence guarantee. Nevertheless, our approach is not blindly random. PPXA is a generalization of the Douglas-Rachford algorithm~\cite{combettes2007douglas}. The latter algorithm, when applied to non-convex, lower, semi-continuous functions has been proved to converge to a stationary point, one consequence of a more general minimization scheme presented by Attouch et al.~\cite{
attouch2010proximal}. Unfortunately, the indicator function of the set of real-valued Parseval windows is most likely not semi-continuous and therefore not subject to Attouch's result.

Since the problem is not convex, good starting value and timestep choices are crucial. We have obtained good results and dependable convergence when choosing a starting window that is not too far from what we aim for, i.e. it already has a good frame bound ratio $B/A$ for the Gabor parameters $a,M$ and shows the properties we wish to promote in the tight window, e.g. TF concentration. Note that, especially for frames with small redundancy, it has been observed that a trade-off between localization and smoothness in TF exists between the analysis and dual windows. Therefore, low redundancy Parseval frame windows provide, in comparison, suboptimal TF concentration.

As starting setup, we choose a Gabor system $\mathcal{G}(g,30,60)$ with an Itersine window of length $L_g = 60$. For this half-overlap, redundancy $2$ situation, the Itersine window forms a tight, painless frame with better joint TF concentration than other widely used constructions for redundancy $2$ tight frames, such as the cosine window $\chi_{[-0.5,0.5]}\cos(\pi\cdot)$ or rectangular window $\chi_{[-0.5,0.5]}$. We now attempt the construction of a Gabor Parseval frame with redundancy $2$, using a window function that further improves the TF concentration of the Itersine window.

To gain some design freedom, we allow the tight window $g_t$ to have support length $L_{g_t}\leq 360$. As in the earlier experiments, gradient priors are used to promote a window that is smooth and well-localized in both domains, leading (formally) to the optimization problem 
\[
 \mathop{\operatorname{arg~min}}\limits _{x \in \mathcal{C}_{\text{Pars}} \cap \mathcal{C}_{\text{supp}}} \lambda_1 \|\nabla \mathcal{F} x\|_2^2 + \lambda_2 \|\nabla x\|_2^2.
\]
For easier comparison, we tuned the result to have roughly the same visual concentration in time. The result shown was obtained for the regularization parameters $\lambda_1 = 1$, $\lambda_2 = 5$ and shows improved decay and side lobe attenuation, when compared to the Itersine, see Figure~\ref{fig:Experiments-tight} and Table~\ref{tab:Ex-tight}. 

\begin{figure}[!thp]
\begin{centering}
\includegraphics[width=0.23\textwidth]{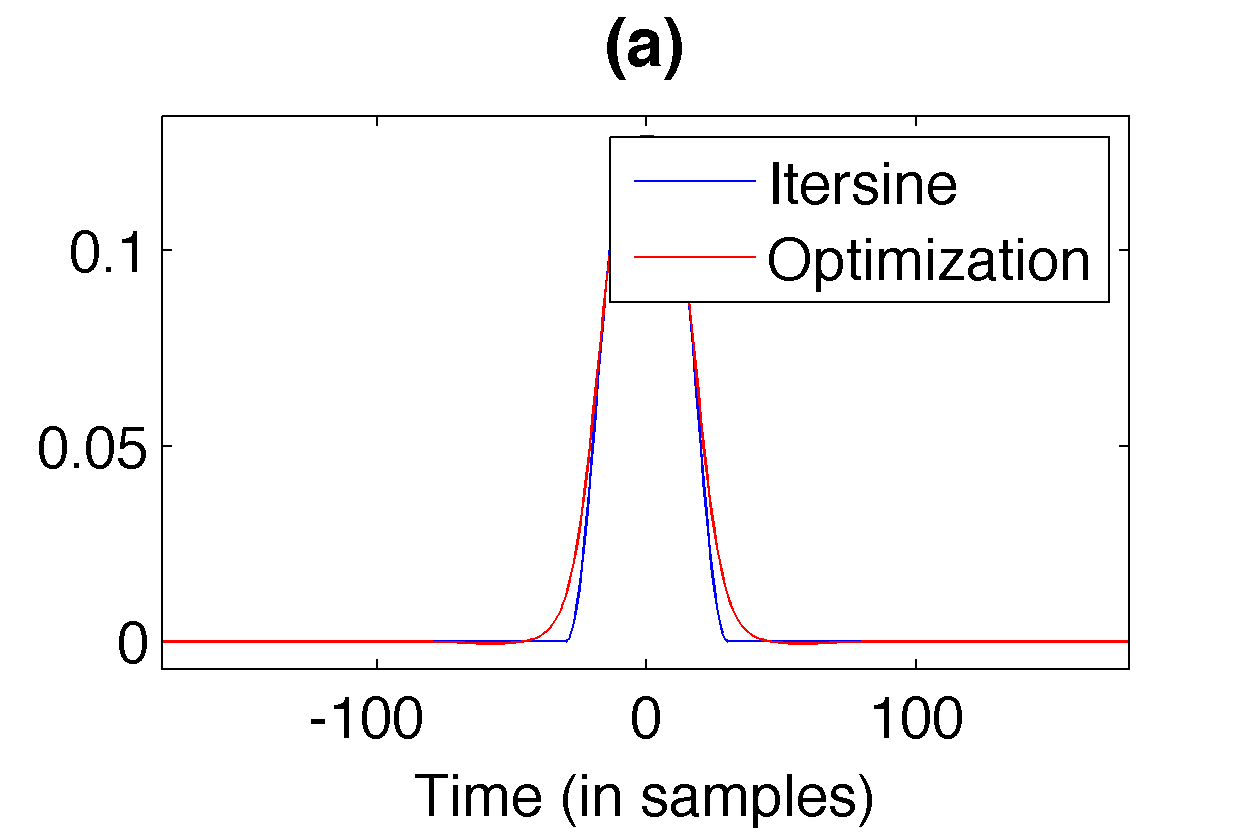}
\includegraphics[width=0.23\textwidth]{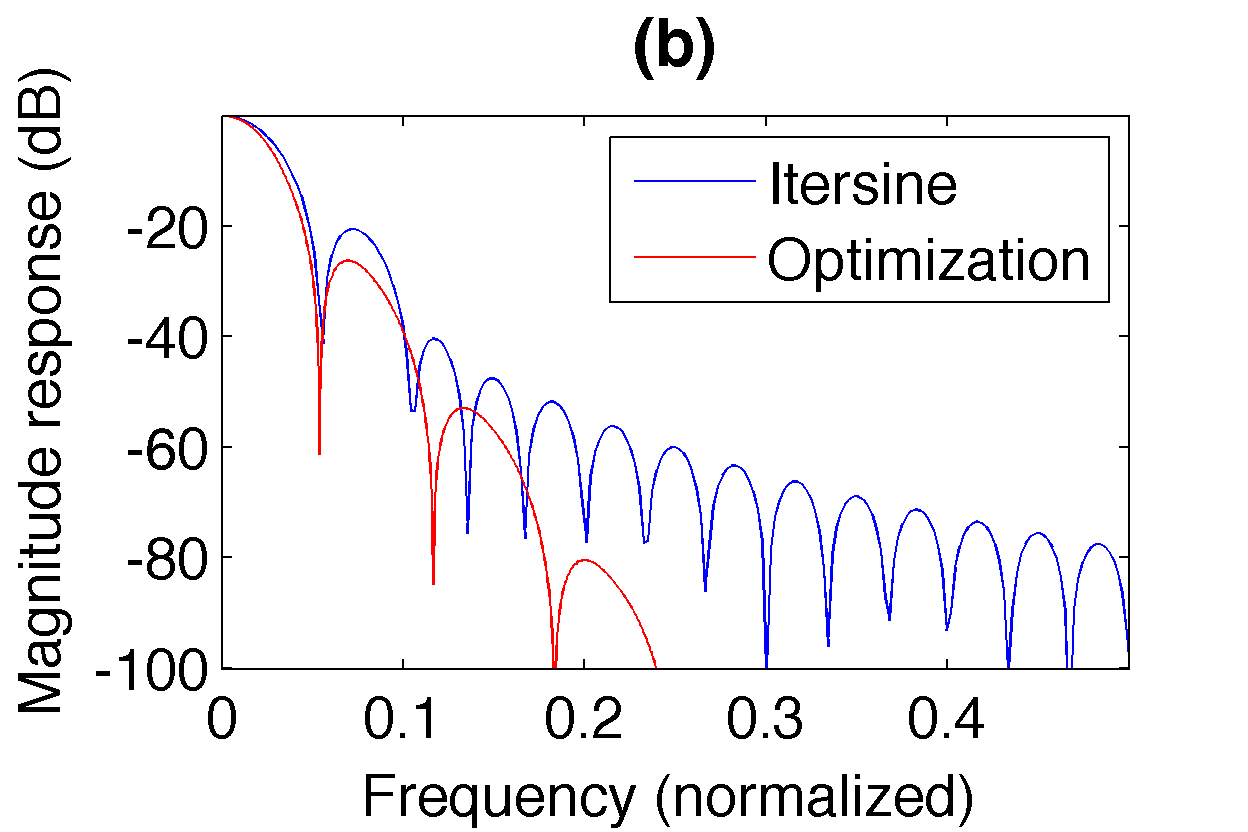}
\par

\end{centering}
\caption{$\mathcal{G}(g,30,60)$, $L_{g}=60$, compactly supported tight windows: $L_g=360$:  Itersine (gray). Result of optimization (black).}\label{fig:Experiments-tight}
\end{figure}

\begin{table}[thb] 
\begin{small}
\begin{center}\begin{tabular}{ |c|ccccc|} 
 \hline 
 & $-3$~dB-W & ML-W & 1/pr-W & SL-A & SL-D \\ 
 \hline 
$x_\text{iter}$  & $  10.2778 $ & $   5.5449 $ & $ 175.4726 $ & $  20.5885 $ & $  78.1983 $  \\ 
$x_\text{opt}$ &$  10.2778 $ & $   \mathbf{5.4167} $ & $ \mathbf{179.6258} $ & $  \mathbf{26.1916} $ & $ \mathbf{126.9964} $ \\ 
 \hline 
 \end{tabular}\end{center}
 \begin{center}\begin{tabular}{ |c|cc|} 
 \hline 
 &  $\frac{\|\nabla x\|_2}{100}$ & $\frac{\|\nabla \mathcal{F} x\|_2}{100}$\\ 
 \hline 
$x_\text{iter}$  &  $   4.1096 $ & $   \mathbf{5.5811} $ \\ 
$x_\text{opt}$ & $   \mathbf{3.5207} $ & $   6.0137 $ \\ 
 \hline 
 \end{tabular}\end{center}
 \end{small}
 \caption{Window quality measures and gradient criteria for the tight window experiment. The subscript indicates the window: $x_\text{iter}$: Itersine window, $x_\text{opt}$: Optimized tight window. Best values indicated in bold.}\label{tab:Ex-tight}
\end{table} 

Although the heuristic tight frame optimization has provided promising results, there is no guarantee for the results' optimality. Whenever the tight frame property is not essential, the construction of a pair of dual frames with good TF concentration might be preferable. For the same parameters as before, we choose a Nuttall window of length $L_{g} = 120$ to construct a frame. This window is slightly broader in time than the Itersine window used before, but provides very good decay and concentration in frequency, see Figure~\ref{fig:beat_itersine}. For the dual window, we consider $L_h \leq 360$ as in the tight case and the optimization problem
\[
 \mathop{\operatorname{arg~min}}\limits _{x \in \mathcal{C}_{\text{dual}} \cap \mathcal{C}_{\text{supp}}} \lambda_1 \|\nabla \mathcal{F} x\|_2^2 +  \lambda_2\|\nabla x\|_2^2.
\]
Again, the regularization parameters have been tuned to provide similar concentration in time. The dual window shown in Figure~\ref{fig:beat_itersine} was obtained for the regularization parameters $\lambda_1 = 0.1,\ \lambda_2 = 1$. Both the Nuttall window and its optimized dual show improved joint TF concentration over the tight Itersine window, see also Table~\ref{tab:Ex-tight2}. In terms of joint TF localization, the values presented in Table~\ref{tab:Ex-tight} and Table~\ref{tab:Ex-tight2} suggest that all $3$ prototypes constructed in this excursion show a considerable improvement upon the Itersine window and consequently over other widely used redundancy $2$ tight Gabor frame constructions.

\begin{figure}[!thp]

\begin{centering}
\includegraphics[width=0.23\textwidth]{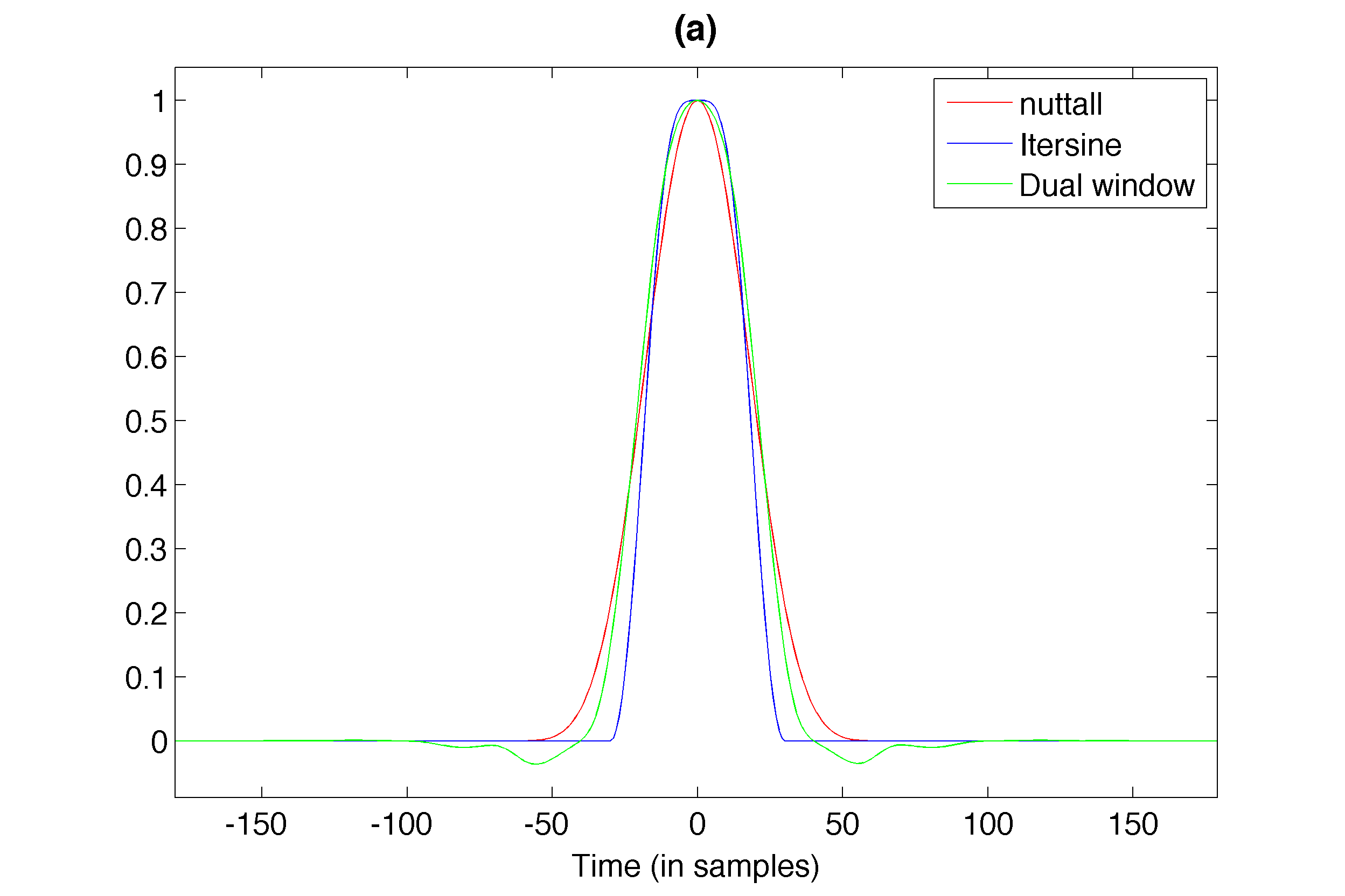}
\includegraphics[width=0.23\textwidth]{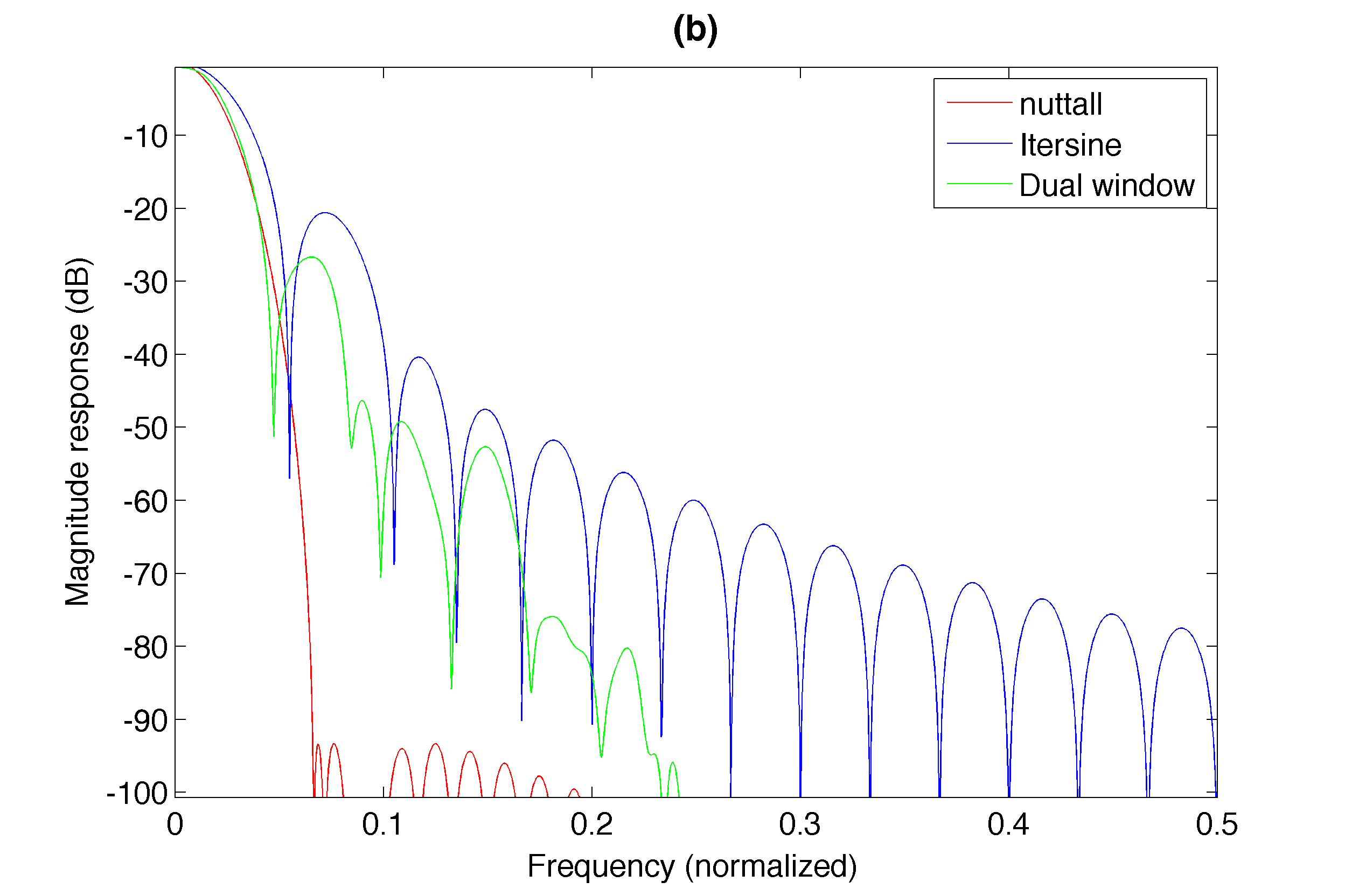}
%
%

\end{centering}
\caption{Construction of a pair of dual FIR windows both improving Itersine TF concentration
($\mathcal{G}(g,30,60)$, Nuttall $L_{g_0}=120$ (gray), Itersine $L_{g}=60$ (gray dotted), Dual window $L_{h}=120$ (black)): Left: Windows in time. Right: Windows in frequency.\label{fig:beat_itersine}}
\end{figure}

\begin{table}[thb] 
\begin{center}\begin{tabular}{ |c|ccccc|} 
 \hline 
 & $-3$~dB-W & ML-W & 1/pr-W & SL-A & SL-D \\ 
 \hline 
$x_\text{iter}$  & $   \mathbf{10.2778} $ & $   5.5093 $ & $ \mathbf{176.6069} $ & $  20.5903 $ & $  78.1948 $ \\ 
$x_\text{nutt}$ & $  11.3889 $ & $   6.7130 $ & $ 130.7990 $ & $  \mathbf{93.3292} $ & $  55.7800 $ \\ 
$x_\text{opt}$ & $  11.9444 $ & $   \mathbf{4.7685} $ & $ 175.5701 $ & $  26.0021 $ & $  \mathbf{91.1066} $ \\ 
 \hline 
 \end{tabular}\end{center} 
 \begin{center}\begin{tabular}{ |c|cc|} 
 \hline 
 &  $\frac{\|\nabla x\|_2}{100}$ & $\frac{\|\nabla \mathcal{F} x\|_2}{100}$\\ 
 \hline 
$x_\text{iter}$   & $   4.3278 $ & $   \mathbf{8.4892} $ \\ 
$x_\text{nutt}$ &  $   3.1292 $ & $  10.5630 $ \\ 
$x_\text{opt}$ &  $   \mathbf{3.0412} $ & $   9.1353 $ \\ 
 \hline 
 \end{tabular}\end{center} 
 \caption{Window quality measures and gradient criteria for the tight window experiment. The subscript indicates the window: $x_\text{iter}$: Itersine window, $x_\text{nutt}$: Nuttall window, $x_\text{dual}$: Optimized dual window.  Best values indicated in bold. Minor differences to Table VI are due to different assumed length $L$. They amount to either sampling issues (ML-W) or a scaling factor ($\|\nabla \mathcal{F} x\|_2$).}\label{tab:Ex-tight2}
\end{table}


\section{Conclusion}

In this contribution, we have proposed a convex optimization framework for the 
computation of optimized Gabor dual windows. The presented method is based on the 
observation that the set of dual windows for a fixed Gabor filterbank can be described 
as the solution set to the linear Wexler-Raz equations. Furthermore, we exploit the facts 
that support constraints can be expressed as linear equations and that compactly supported 
Gabor windows are dual independent of the underlying signal length.

The resulting scheme enables the computation of alternative dual windows, with the possibility 
to optimize a wide variety of criteria, freely chosen by the user. Although the complexity varies
with the selected priors, results are usually obtained efficiently using the provided open-source 
implementation, supplied on the associated webpage \url{https://lts2.epfl.ch/rrp/gdwuco/}.

We provided several demonstrations of the method's capability in the context of joint time-frequency concentration 
optimization. In particular, we constructed dual windows that considerably improve time-frequency concentration
over the widely used canonical dual, windows that provide alternative ratios between time and frequency concentration 
and windows that combine short support, smoothness and frequency concentration. Finally, we showed that we can, heuristically,
apply the proposed framework to compute well-concentrated tight windows, improving previously known explicit tight frame
constructions.

The results show that our method can be applied in various situations to construct dual
frames with properties more relevant for application than minimal $\ell^{2}$-norm. 

Future work will concern the extension of the presented scheme to more general systems, e.g.
Gabor systems with complex-valued window on nonseparable sampling sets~\cite{hosowi13}
and nonstationary Gabor frames~\cite{nsdgt10}.


\section*{Acknowledgement} 
We thank the anonymous reviewers for their constructive comments that helped us improve the paper. 

This work was supported by the Austrian Science Fund (FWF)
START-project FLAME (``Frames and Linear Operators for Acoustical
Modeling and Parameter Estimation''; Y 551-N13).

\section*{Bibliography}
\bibliographystyle{elsarticle-num}
\bibliography{biblio}

\end{document}